\newtheorem{Theorem}{Theorem}
\newtheorem{theorem}{Theorem}[section]
\newtheorem{lemma}[theorem]{Lemma}
\newtheorem{corollary}[theorem]{Corollary}
\newtheorem{proposition}[theorem]{Proposition}
\theoremstyle{definition}
\newtheorem{definition}[theorem]{Definition}
\theoremstyle{remark}
\newtheorem{remark}[theorem]{Remark}
\def\Z{\mathbf{Z}}
\def\C{\mathbf{C}}
\def\Q{\mathbf{Q}}
\def\P{\mathbf{P}}
\def\G{\mathbf{G}}
\let\ms\mathscr
\let\mc\mathcal
\let\ol\overline
\let\ul\underline
\let\lbb\llbracket
\let\rbb\rrbracket
\DeclareMathOperator{\Hom}{Hom}
\DeclareMathOperator{\Tor}{Tor}
\DeclareMathOperator{\Sym}{Sym}
\DeclareMathOperator{\Ind}{Ind}
\DeclareMathOperator{\Spec}{Spec}
\DeclareMathOperator{\avg}{avg}
\DeclareMathOperator{\sgn}{sgn}
\DeclareMathOperator{\Frac}{Frac}
\DeclareMathOperator{\red}{red}
\DeclareMathOperator{\Sub}{Sub}
\DeclareMathOperator{\dSub}{\Delta{}Sub}
\newcommand{\GL}{\mathrm{GL}}
\newcommand{\SL}{\mathrm{SL}}
\newcommand{\Mod}{\mathrm{Mod}}
\renewcommand{\Vec}{\mathrm{Vec}}
\newcommand{\id}{\mathrm{id}}
\newcommand{\bw}[2]{{\bigwedge}^{\! #1}{#2}}
\newcommand{\lw}[2]{{\textstyle \bigwedge}^{\! #1}{#2}}
\newcommand{\Sp}{\mathbf{M}}
\newcommand{\bS}{\mathbf{S}}
\newcommand{\bV}{\mathbf{V}}
\newcommand{\Cfs}{(\mathrm{fs})}
\newcommand{\uotimes}{\, \ul{\otimes} \,}
\def\qbinom#1#2#3{\left[ \!\! \begin{array}{c} #1 \\ #2 \end{array} \!\! \right]_{#3}}
\title{Syzygies of Segre embeddings and $\Delta$-modules}
\author{Andrew Snowden}
\thanks{The author was partially supported by NSF fellowship DMS-0902661.}
\date{June 20, 2011}
\begin{document}

\begin{abstract}
We study syzygies of the Segre embedding of $\P(V_1) \times \cdots \times \P(V_n)$, and prove two finiteness results.
First, for fixed $p$ but varying $n$ and $V_i$, there is a finite list of ``master $p$-syzygies'' from which all other
$p$-syzygies can be derived by simple substitutions.  Second, we define a power series $f_p$ with coefficients in
something like the Schur algebra, which contains essentially all the information of
$p$-syzygies of Segre embeddings (for all $n$ and $V_i$), and show that it is a rational function.  The list of
master $p$-syzygies and the numerator and denominator of $f_p$ can be computed algorithmically (in theory).
The central observation of this paper is that by considering all Segre embeddings at once (i.e., letting $n$ and the
$V_i$ vary) certain structure on the space of $p$-syzygies emerges.  We formalize this structure in the concept of a
$\Delta$-module.  Many of our results on syzygies are specializations of general results on $\Delta$-modules that we
establish.  Our theory also applies to certain other families of varieties, such as tangent and secant varieties of
Segre embeddings.
\end{abstract}

\maketitle
\tableofcontents

\section{Introduction}

Let $V_1, \ldots, V_n$ be finite dimensional complex vector spaces.  The Segre embedding is the map
\begin{displaymath}
\P(V_1) \times \cdots \times \P(V_n) \to \P(V_1 \otimes \cdots \otimes V_n)
\end{displaymath}
taking a tuple of lines to their tensor product.  Despite its fundamental nature, the syzygies of this embedding are
poorly understood (excluding some special cases, such as when $n=2$).  The purpose of this paper is to establish
a general structural theory of these syzygies.  We show that for $p$ fixed, but $n$ and the $V_i$ variable, the
collection of all $p$-syzygies can be given an algebraic structure, and that this structure has favorable
finiteness properties.  One consequence of this result is that almost any question about $p$-syzygies --- such as:
is the module of 13-syzygies of every Segre embedding generated in degrees at most 20? --- can be resolved by a
finite computation (in theory).  In the rest of the introduction, we describe this structure and
present our two main theorems about it.  As the precise statements of these theorems are somewhat technical, we
begin with an informal discussion that attempts to elucidate the essential content of the first theorem.

\subsection{The first theorem (informal)}
\label{ss:informal}

The equations of the Segre embedding have been known for as long as the embeddings themselves have been known, and
are familiar to most anyone who has studied algebraic geometry.  We briefly recall them.  First suppose that $n=2$,
i.e., consider the
embedding of $\P(V_1) \times \P(V_2)$ into $\P(V_1 \otimes V_2)$.  Let $\{x_i\}_{i \in I_1}$ be coordinates on
$\P(V_1)$, let $\{x_j\}_{j \in I_2}$ be coordinates on $\P(V_2)$ and let $\{x_{ij}\}_{(i,j) \in I_1 \times I_2}$ be
coordinates on $\P(V_1 \otimes V_2)$.  Then the equations
\begin{displaymath}
x_{i_1 j_1} x_{i_2 j_2} - x_{i_1 j_2} x_{i_2 j_1} =0
\end{displaymath}
cut out $\P(V_1) \times \P(V_2)$ inside $\P(V_1 \otimes V_2)$.  Now suppose that $n=3$.  Then, in the obvious
notation, we have the equation
\begin{displaymath}
x_{i_1 j_1 k_1} x_{i_2 j_2 k_2} - x_{i_1 j_1 k_2} x_{i_2 j_2 k_1} =0.
\end{displaymath}
This is not symmetrical in the $i$, $j$ and $k$ indices, and so we get two other families of equations by
changing the roles of the indices.  The resulting collection of equations cuts out $\P(V_1) \times \P(V_2)
\times \P(V_3)$ inside of $\P(V_1 \otimes V_2 \otimes V_3)$.  The situation for $n>3$ is similar.

The equations we have just described are quite similar to each other.  Indeed, they can all be recovered from the
single ``master equation''
\begin{displaymath}
x_{12} x_{34}-x_{14} x_{32} =0
\end{displaymath}
by certain substitution rules.  When $n=2$ we obtain all the equations by replacing the numbers 1 and 3 (resp.\ 2
and 4) in the master equation by elements of $I_1$ (resp.\ $I_2$).  When $n=3$ we obtain all the equations
by replacing the numbers 1 and 3 by elements of $I_1 \times I_2$ and 2 and 4 by elements of $I_3$, or by doing
this with the roles of $I_1$, $I_2$ and $I_3$ interchanged.  For general $n$, we obtain the equations for
$\P(V_1) \times \cdots \times \P(V_n)$ by partitioning the set $\{1, \ldots, n\}$ into two disjoint subsets $A$ and $B$
and then replacing 1 and 3 by elements of $\prod_{k \in A} I_k$ and 2 and 4 by elements of $\prod_{k \in B} I_k$.

We now examine the syzygies between equations (2-syzygies).  We again begin with the case $n=2$.  Consider the
determinant
\begin{displaymath}
\left| \begin{array}{ccc}
x_{i_1 j_1} & x_{i_1 j_2} & x_{i_1 j_3} \\
x_{i_1 j_1} & x_{i_1 j_2} & x_{i_1 j_3} \\
x_{i_2 j_1} & x_{i_2 j_2} & x_{i_2 j_3}
\end{array} \right|
\end{displaymath}
The first two rows are repeated, and so the determinant vanishes when considered as a polynomial in the variables
$x_{ij}$.  Now, the $2 \times 2$ minors along the bottom are exactly the previously constructed equations.  Thus
by taking the Laplace expansion of the determinant along the top row we obtain a linear relation between equations,
i.e., a syzygy.  This construction can be modified to produce 2-syzygies for $n>2$, in the same way that the $n=2$
equations were modified to get equations for $n>2$.  These syzygies generate the module of 2-syzygies, for any $n$.

As was the case for equations, all the 2-syzygies just produced can be recovered from a ``master syzygy'' by certain
substitution rules.  The master syzygy is just the one obtained from the Laplace expansion of
\begin{displaymath}
\left| \begin{array}{ccc}
x_{13} & x_{14} & x_{15} \\
x_{13} & x_{14} & x_{15} \\
x_{23} & x_{24} & x_{25}
\end{array} \right|
\end{displaymath}
The substitution rules are similar:  partition $\{1,\ldots, n\}$ into two sets $A$ and $B$ and replace 1 and 2 by
elements of $\prod_{k \in A} I_k$ and 3, 4 and 5 by elements of $\prod_{k \in B} I_k$.

Given these observations, one might ask:  are there master higher syzygies?  An affirmative answer is provided by the
following result, the first of our two main theorems:

\begin{Theorem}[Informal version]
Given $p$, there is a finite list of ``master $p$-syzygies'' from which all $p$-syzygies can be obtained through
the substitution procedures sketched above.
\end{Theorem}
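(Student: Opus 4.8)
The plan is to assemble \emph{all} $p$-syzygies of \emph{all} Segre embeddings into a single module, prove a Noetherianity theorem forcing it to be finitely generated, and then translate finite generation back into the language of master syzygies and substitutions. Write $\bV = (V_1, \ldots, V_n)$, set $A_\bV = \Sym(V_1 \otimes \cdots \otimes V_n)$, and let $R_\bV = \bigoplus_{d \ge 0} \Sym^d(V_1) \otimes \cdots \otimes \Sym^d(V_n)$ be the section ring of the Segre embedding, a graded quotient of $A_\bV$; the $p$-syzygies form the space $\Sigma_p(\bV) := \Tor_p^{A_\bV}(R_\bV, \C)$. This collection is functorial in two compatible ways. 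First, a linear map $V_i \to V_i'$ induces a map $\Sigma_p(\bV) \to \Sigma_p(\bV')$, polynomially in each slot; these maps produce the ``replace an abstract index by an element of $I_k$'' substitutions, through the coordinate maps $\C^{I_k} \to V_k$. Second, a surjection $\phi \colon \{1, \ldots, m\} \to \{1, \ldots, n\}$ together with linear maps $V_i \to \bigotimes_{j \in \phi^{-1}(i)} W_j$ yields a graded ring map $A_\bV \to A_{\mathbf W}$ and a compatible algebra map $R_\bV \to R_{\mathbf W}$ — built from the Segre projections $\Sym^d(\bigotimes_j W_j) \twoheadrightarrow \bigotimes_j \Sym^d(W_j)$ — hence a map $\Sigma_p(\bV) \to \Sigma_p(\mathbf W)$; these produce the ``partition $\{1, \ldots, n\}$ and distribute the remaining abstract indices'' substitutions. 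Encode both kinds of maps into a single category $\Delta$ — objects are tuples of vector spaces, morphisms combine a ``backwards'' surjection of index sets with linear maps into tensor products — and call a functor $\Delta \to \Vec$ a $\Delta$-module. Then $\bV \mapsto A_\bV$ and $\bV \mapsto R_\bV$ are $\Delta$-algebras, $\Sigma_p$ is a $\Delta$-module, and a substitution in the sense of the introduction is exactly the image of an element under a morphism out of a \emph{principal} (one-generator) $\Delta$-module; so the theorem asserts that $\Sigma_p$ is a finitely generated $\Delta$-module.

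To establish this, I would present $\Sigma_p$ as a subquotient of something obviously finitely generated. The Koszul complex gives $\Tor_\bullet^{A_\bV}(R_\bV, \C) = H_\bullet(\Lambda^\bullet(V_1 \otimes \cdots \otimes V_n) \otimes_\C R_\bV)$, a complex of free $R_\bV$-modules built functorially in $\bV$, hence a complex of $R$-modules in the $\Delta$-sense. Its $q$-th term has ``rank'' $\Lambda^q(V_1 \otimes \cdots \otimes V_n)$, a subquotient of the $q$-th tensor power of the principal $\Delta$-module $\bV \mapsto V_1 \otimes \cdots \otimes V_n$, and hence a finitely generated $\Delta$-module. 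So each term of the complex is a finitely generated $R$-module (in the $\Delta$-sense), and $\Sigma_p$ is therefore a subquotient of a finitely generated $R$-module.

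The crux is a Noetherianity theorem: the $\Delta$-algebra $A$ — equivalently, $\Sym$ of any principal $\Delta$-module — and hence its quotient $R$ is \emph{Noetherian}, meaning every submodule of a finitely generated $\Delta$-module over it is again finitely generated. Granting this, the previous paragraph gives that $\Sigma_p$ is a finitely generated $R$-module, and since $R^+$ annihilates $\Tor$, it is a finitely generated $\Delta$-module — which is the theorem. The obstacle is that a $\Delta$-module is spread over all widths $n$ at once, so one must rule out submodules ``generated only at infinite width.'' My plan is a Gröbner-style argument: to a free $A$-module attach a combinatorial set of ``$\Delta$-monomials'' — a location in $\Delta$ together with a monomial of $A$ there — equip it with a term order refining the partial order in which $\mu$ divides $\nu$ when $\nu$ is obtained from $\mu$ by a structure map of $\Delta$ followed by multiplication by a monomial, and reduce Noetherianity to the claim that this partial order is a well-partial-order. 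I would prove that claim by combining (i) Dickson's lemma for the polynomial exponents at each fixed width, together with the semisimplicity of polynomial $\GL_\infty$-representations, and (ii) Higman's lemma in the ``width direction,'' where passing from $m$ to $n > m$ tensor factors looks, after regrouping, like embedding a shorter word into a longer one. Choosing the combinatorial model for $\Delta$-monomials and the term order so that (i) and (ii) genuinely combine into a well-partial-order is the delicate point, and I expect it to be by far the hardest part.

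Finally, I would unwind the conclusion. A finite generating set for the $\Delta$-module $\Sigma_p$ is a finite list of $p$-syzygies, each living on some product $\P(U_1) \times \cdots \times \P(U_m)$ with $m$ and the $\dim U_i$ bounded once $p$ is fixed — these are the master $p$-syzygies. The assertion that they generate $\Sigma_p$ says precisely that every $p$-syzygy of every Segre embedding is the image of one of them under a morphism of $\Delta$, that is, is obtained by choosing a partition of $\{1, \ldots, n\}$ and substituting the abstract indices by tuples of coordinates exactly as in the introduction. This proves the theorem.
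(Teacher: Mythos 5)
Your framework is the same as the paper's (assemble all syzygies into a $\Delta$-module, present it via the Koszul complex, prove a noetherianity statement, unwind finite generation into ``master syzygies plus substitutions''), and your final unwinding step is fine. But the crux of your argument --- that the $\Delta$-algebra $A\colon (V_1,\ldots,V_n)\mapsto \Sym(V_1\otimes\cdots\otimes V_n)$, and hence its quotient $R$, is noetherian as a $\Delta$-algebra --- is precisely where the proposal has a genuine gap, and it is a gap the paper deliberately engineers its way around rather than closing. Noetherianity of $A$ in your sense would in particular imply that every $\Delta$-ideal is finitely generated; the paper poses even the finite-level case of that statement as an open problem (\S\ref{s:finlev} and Question~2 of \S\ref{s:ques}), and it is strictly stronger than anything proved there. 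Your Dickson-plus-Higman sketch does not engage the two features that make this hard: the partitions occurring in $A$ have unboundedly many rows (so the Weyl-lemma device of evaluating on a fixed $\C^d$, which is what makes the paper's Theorem~\ref{tc:noeth} work, is unavailable), and the structure maps are arbitrary linear substitutions into tensor products, so one must first justify a degeneration to a purely combinatorial monomial setting before any wpo argument can start. Relatedly, note that $\bw{q}{W}\otimes R$ is \emph{not} a finitely generated $\Delta$-module (the internal grading is preserved by all $\Delta$-maps, so each degree needs its own generators); your argument therefore genuinely requires noetherianity of the algebra $R$ over all degrees at once, not merely noetherianity of finitely generated $\Delta$-modules.

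The paper avoids this by working one degree at a time and then invoking a Segre-specific input you omit. At fixed degree $d$, the Koszul terms are quotients of $\Phi(T_d)$; Proposition~\ref{delta-w} shows $\Delta$-submodules of $\Phi(T_d)$ are $W_d^{S_d}$-submodules, and $W_d^{S_d}$ is handled by passing to twisted commutative algebras (evaluation on constant families) and then to an honest polynomial ring via Weyl's lemma --- this yields noetherianity only for ``small'' $\Delta$-modules, i.e.\ finite generation of each graded piece $F_p^{(d)}$. Finite generation of $F_p$ itself then needs the classical fact (Proposition~\ref{degbd}, via the quadratic Gr\"obner basis of the Segre ideal) that $F_p$ is supported in degrees $p+1,\ldots,2p$. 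Your route, if the sweeping noetherianity claim were true, would elegantly bypass that degree bound; as it stands, you have replaced the paper's hardest but completed step with a claim at the level of an open conjecture, and you have no fallback (no degree bound) if that claim fails. To repair the proposal along the paper's lines: restrict to a fixed degree $d$, prove smallness/noetherianity there via the $W_d^{S_d}$ and twisted-commutative-algebra reduction, and add the degree-support bound for the Segre.
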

\setcounter{Theorem}{0}

This result is slightly different than what we have seen for equations and syzygies of equations in that there is
no longer a single master syzygy, but rather a finite list of them.  The theorem does not provide any canonical
list of master syzygies, it just asserts the existence of such a list.  The proof of the theorem can easily be
adapted to yield an algorithm which, given $p$, produces a list of master $p$-syzygies.
Unfortunately, this algorithm is fantastically impractical, even for small $p$ --- it involves
linear algebra over a polynomial ring in $p^{2p}$ variables!  Note that the substitution rules do not change the
general form of the syzygy (e.g., all equations are the difference of two monomials involving two variables), and
so the above theorem implies that there are only finitely many ``forms'' of syzygies.

\subsection{The first theorem}

We now give a more formal version of the above discussion.
We begin by explaining precisely what the ``master equation'' is in coordinate-free language.  Once this is
accomplished, generalizing to higher syzygies will be straightforward.  Let $I_n(V_1, \ldots, V_n)$ be the ideal
defining the Segre embedding of $\P(V_1) \times \cdots \times \P(V_n)$.  The $n$ subscript simply indicates that
we are dealing with $n$ vector spaces; it is redundant but provides clarity.  The ideal is generated by its
degree two piece, so we focus on it; we denote it by $I^{(2)}_n(V_1, \ldots, V_n)$.  We now observe that
$I^{(2)}_n$ has the following pieces of structure:
\begin{itemize}
\item[(A1)] Given maps $V_i \to V_i'$, there is a natural map $I^{(2)}_n(V_1, \ldots, V_n) \to I^{(2)}_n(V'_1, \ldots,
V'_n)$.  In other words, $I^{(2)}_n$ is a functor.
\item[(A2)] Given $\sigma \in S_n$, there is a natural map $\sigma_*:I_n^{(2)}(V_1, \ldots, V_n) \to
I_n^{(2)}(V_{\sigma(1)}, \ldots, V_{\sigma(n)})$.  In other words, the functor $I_n^{(2)}$ is $S_n$-equivariant.
\item[(A3)] There is a natural map
\begin{displaymath}
I^{(2)}_n(V_1, \ldots, V_{n-1}, V_n \otimes V_{n+1}) \to I^{(2)}_{n+1}(V_1, \ldots, V_{n-1}, V_n, V_{n+1}).
\end{displaymath}
Geometrically, this map comes from the commutative diagram of Segre embeddings:
\begin{displaymath}
\xymatrix{
\P(V_1) \times \cdots \times \P(V_{n-1}) \times \P(V_n) \times \P(V_{n+1}) \ar[r] \ar[rd] &
\P(V_1) \times \cdots \times \P(V_{n-1}) \times \P(V_n \otimes V_{n+1}) \ar[d] \\
& \P(V_1 \otimes \cdots \otimes V_{n-1} \otimes V_n \otimes V_{n-1}) }
\end{displaymath}
\end{itemize}
Note that (A3) seems to favor the $n$th parameter of $I_n$; however, due to (A2) symmetry is not actually broken.

The Segre embedding of $\P^1 \times \P^1$ into $\P^3$ is defined by a single quadratic equation, and so the space
$I_2^{(2)}(\C^2, \C^2)$ is one dimensional.  The ``master equation'' is simply a non-zero element of this space.
As we now explain, any quadratic equation for any Segre embedding can be obtained from the master equation through the
operations (A1)--(A3).  Let $\{x_1, x_3\}$ be a basis for the first $\C^2$ and $\{x_2, x_4\}$ a basis
for the second $\C^2$, so that $\{x_{12}, x_{14}, x_{32}, x_{34}\}$ is a basis for $\C^2 \otimes \C^2$.  Then the
quadratic polynomial
\begin{equation}
\label{eq:master}
x_{12}x_{34}-x_{14}x_{32}
\end{equation}
is a non-zero element of $I_2^{(2)}(\C^2, \C^2)$, and can be taken as the master equation.  Let $V_1, \ldots, V_n$
be vector spaces and let $\{x^{(k)}_j\}_{j \in I_k}$ be a basis for $V_k$.  Let $A \amalg B$ be a partition of
$\{1, \ldots, n\}$, let $\alpha$ and $\gamma$ be elements of $\prod_{k \in A} I_k$ and let $\beta$ and $\delta$
be elements of $\prod_{k \in B} I_k$.  Then, in the obvious notation,
\begin{equation}
\label{eq:quad}
x_{\alpha \beta} x_{\gamma \delta}-x_{\alpha \delta} x_{\gamma \beta}
\end{equation}
is an element of $I_n^{(2)}(V_1, \ldots, V_n)$, and such elements span $I_n^{(2)}(V_1, \ldots, V_n)$.  Consider the
composite map
\begin{equation}
\label{eq:comp}
I_2^{(2)}(\C^2, \C^2) \to I_2^{(2)} \bigg( \bigotimes_{k \in A} V_k, \bigotimes_{k \in B} V_k \bigg) \to
I_n^{(2)}(V_1, \ldots, V_n).
\end{equation}
The first map in \eqref{eq:comp} is the one coming form (A1) applied to the maps
\begin{displaymath}
\begin{split}
&f:\C^2 \to \bigotimes_{k \in A} V_k, \qquad f(x_1)=x_{\alpha}, \quad f(x_3)=x_{\gamma} \\
&g:\C^2 \to \bigotimes_{k \in B} V_k, \qquad g(x_2)=x_{\beta}, \quad g(x_4)=x_{\delta}.
\end{split}
\end{displaymath}
The second map in \eqref{eq:comp} comes from repeated applications of (A2) and (A3).  The composite map
in \eqref{eq:comp} simply takes an equation in the variables $x_{12}$, $x_{14}$, $x_{32}$, $x_{34}$ and changes
the 1 and 3 to $\alpha$ and $\gamma$ and the 2 and 4 to $\beta$ and $\delta$.  Thus the operations (A1)--(A3)
truly do provide a coordinate-free replacement for the substitution rules of \S \ref{ss:informal}.  In particular,
the master equation \eqref{eq:master} is taken to the equation \eqref{eq:quad}.  This shows that any quadratic equation
can be obtained from the master equation through the operations (A1)--(A3).

As stated, generalizing the above discussion on equations to higher syzygies is straightforward.  However, it will
be convenient to first introduce a definition.  A \emph{$\Delta$-module} is an object $F$ consisting of:
\begin{itemize}
\item[(B1)] For each non-negative integer $n$, an $S_n$-equivariant functor $F_n:\Vec^n \to \Vec$ satisfying a mild
technical condition.  Here, $\Vec$ denotes the category of complex vector spaces.
\item[(B2)] For each $n$, a natural transformation $F_n(V_1, \ldots, V_{n-1}, V_n \otimes V_{n+1}) \to
F_{n+1}(V_1, \ldots, V_{n+1})$.
\end{itemize}
There are certain conditions placed on the maps in (B2) which we do not state here.  It is clear how the above
definition relates to the previous discussion:  $I^{(2)}$ is a $\Delta$-module.  (Of course, (B1) takes the place of
both (A1) and (A2), while (B2) corresponds to (A3).)  However, this definition is
inelegant, and thus technically inconvenient.  An equivalent (but much nicer) definition is the following.  Let
$\Vec^{\Delta}$ be the following category:
\begin{itemize}
\item An object is a pair $(V, L)$ consisting of a finite set $L$ and, for each $x \in L$, a vector space
$V_x$.  We think of $(V, L)$ as a family of vector spaces.
\item A morphism $(V, L) \to (V', L')$ consists of a surjection $L' \to L$ together with a linear map
$V_x \to \bigotimes_{y \mapsto x} V'_y$, for each $x \in L$.
\end{itemize}
A $\Delta$-module is then simply a functor $F:\Vec^{\Delta} \to \Vec$ (satisfying a mild technical condition).  The
functor $F_n$ in the previous formulation is obtained by restricting $F$ to families with $n$ elements.

We now precisely define our spaces of syzygies and explain how they admit the structure of a $\Delta$-module.
Let $(V, L)$ be a family of vector spaces.  Let $R(V, L)$ be the projective coordinate ring of $\prod_{x \in L}
\P(V_x)$ and let $P(V, L)$ be the projective coordinate ring of $\P(\bigotimes_{x \in L} V_x)$.  Precisely,
\begin{displaymath}
\begin{split}
P(V, L)&=\bigoplus_{n=0}^{\infty} P^{(n)}(V, L), \qquad P^{(n)}(V, L)=\Sym^n \bigg( \bigotimes_{x \in L} V_x \bigg) \\
R(V, L)&=\bigoplus_{n=0}^{\infty} R^{(n)}(V, L), \qquad R^{(n)}(V, L)=\bigotimes_{x \in L} \Sym^n(V_x).
\end{split}
\end{displaymath}
The Segre embedding corresponds to a surjection of rings $P(V, L) \to R(V, L)$.  Put
\begin{displaymath}
F_p(V, L)=\Tor^{P(V, L)}_p(R(V, L), \C),
\end{displaymath}
where $\C$ is given the structure of a $P(V, L)$-module by letting all positive degree elements act by 0.  We call
$F_p(V, L)$ the space of $p$-syzygies.  This terminology is justified for the following reason:  if
\begin{displaymath}
\cdots M_1 \to M_0 \to R(V, L) \to 0
\end{displaymath}
is a minimal free resolution of $R(V, L)$ by finite free $P(V, L)$-modules, then $M_p$ is isomorphic to
$P(V, L) \otimes_{\C} F_p(V, L)$.  (This is a general fact about minimal free resolutions and has nothing to do
with the specifics of the Segre embedding.)  The functoriality of $F_p$ with respect to maps in
$\Vec^{\Delta}$ stems from a corresponding functoriality of $R$ and $P$ (as mentioned in the discussion of the
properties (A1)--(A3)), and the basic properties of $\Tor$.  Thus each $F_p$ is a $\Delta$-module.  Note that
$F_1$ is identified with $I^{(2)}$.

Before precisely stating our first theorem, we must make one more definition.  Let $F$ be a $\Delta$-module.
An \emph{element} of $F$ is an element of $F(V, L)$ for some $(V, L)$.  Given a set $S$ of elements of $F$, there
is a minimal $\Delta$-submodule $F'$ of $F$ containing $S$.  We call $F'$ the submodule of $F$ \emph{generated} by
$S$.  We say that $F$ is \emph{finitely generated} if it is generated by some finite set of elements.  We now
come to our first main theorem:

\begin{Theorem}
\label{thm2}
The $\Delta$-module $F_p$ is finitely generated.
\end{Theorem}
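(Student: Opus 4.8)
The plan is to reduce Theorem \ref{thm2} to a noetherianity statement for $\Delta$-modules, then prove that statement. First I would establish the basic formalism: a $\Delta$-module $F$ has a notion of degree coming from the polynomial degree in the $\Tor$ grading (the ring $P(V,L)$ is graded, and $F_p(V,L) = \Tor_p^{P(V,L)}(R(V,L),\C)$ inherits an internal grading), and since each $F_p(V,L)$ is a finite-dimensional representation of $\prod_x \GL(V_x)$ obtained from a finite-length functor, the natural first move is to show that $F_p$ is generated in bounded degree --- i.e., there is a single integer $d$ (depending on $p$) such that the elements of $F_p(V,L)$ lying in internal degrees $\le d$, over all $(V,L)$, generate $F_p$ as a $\Delta$-module. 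Given such a bound, one must then separately control the "width" (the number of factors $|L|$, i.e. $n$) and the "size" of the vector spaces $V_x$. The key structural input is that when $|L|$ or $\dim V_x$ is large, the functor $F_p(V,L)$ in low internal degree is "induced" from smaller data via the transition maps (B2) and the functoriality (B1) --- this is exactly the coordinate-free substitution mechanism illustrated for equations and $2$-syzygies in the introduction.

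Concretely, the steps I would carry out are: (1) Prove a \emph{degree bound}: using the fact that $R(V,L)$ is a quotient of $P(V,L)$ cut out by quadrics, together with a Koszul-type / Castelnuovo--Mumford argument bounding the regularity of the Segre ( or at least bounding where $\Tor_p$ can be nonzero --- the subtlety is that the bound must be uniform in $n$ and the $V_i$, which is the reason the $\Delta$-module language is introduced), show $F_p$ is generated in internal degree $\le d(p)$. The cleanest route is probably to embed $F_p$ into a $\Tor$ of a tensor product and use that each $\Sym^n(V_x)$ contributes controllably; alternatively, reduce to the case where all $V_x = \C^2$ by a "polarization" argument and bound things there. (2) Prove a \emph{width bound}: show that for internal degree $\le d$, the value $F_p(V,L)$ is determined, via the maps (B2), by its values on families $(V,L)$ with $|L| \le N(p)$; geometrically, a $p$-syzygy of degree $d$ only "sees" at most $N(p)$ of the tensor factors nontrivially, the rest being carried along by the tensor-product transition maps. (3) Prove a \emph{rank bound}: for $|L|$ and internal degree bounded, show $F_p(V,L)$ is, via (B1), induced from its value on families where each $\dim V_x \le m(p)$ --- this is the statement that a syzygy of bounded degree involves boundedly many coordinates in each factor, so one can factor the relevant maps through small subspaces. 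Finally (4): combine (1)--(3). There are only finitely many isomorphism classes of data $(V,L)$ with $|L|\le N(p)$, all $\dim V_x \le m(p)$, and then only finitely many internal degrees $\le d(p)$ to consider, so $F_p$ restricted to this finite collection is a finite-dimensional vector space; choose a spanning set and these are the master $p$-syzygies.

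The main obstacle is step (1), the \emph{uniform degree bound}. For a fixed Segre embedding the regularity is finite, but a priori it could grow with $n$ and the $\dim V_i$, and if it did the theorem would be false. So one genuinely needs a bound on $\{d : \Tor_p^{P(V,L)}(R(V,L),\C)_d \ne 0\}$ that is independent of $(V,L)$ and depends only on $p$. I expect this to come from an inductive argument on $|L|$ using the transition structure: the ring $R(V,L)$ for $|L|=n$ relates to $R(V',L')$ for $|L'|=n-1$ (collapsing two factors into their tensor product) by a map whose fibers / kernel one can understand, feeding a long exact sequence in $\Tor$; one then argues that the new syzygies introduced at each step occur in degrees bounded independently of $n$. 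Making this precise --- in particular handling the base change from $P(V,L)$ to $P(V',L')$ in the $\Tor$ --- is the technical heart of the proof, and is presumably where the bulk of the work in the paper lies. Steps (2) and (3) are, by contrast, essentially linear-algebra bookkeeping once the right functorial framework for $\Delta$-modules (sub-objects, generation, the structure of "free" $\Delta$-modules) is set up, though that setup itself requires care.
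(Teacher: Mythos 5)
Your overall skeleton---first a degree bound uniform in $(V,L)$, then finiteness in the ``width'' ($\#L$) and ``rank'' ($\dim V_x$) directions, then finitely many pieces---is the right reduction, and your step (1) corresponds to Proposition~\ref{degbd} of the paper. But note that the paper obtains the uniform bound (support in degrees $p+1,\dots,2p$, independently of $n$ and the $V_i$) in one stroke from the known fact that the Segre ideal has a Gr\"obner basis of quadrics \cite{ERT} together with the Taylor resolution; no induction on $n$ or base change in $\Tor$ is needed, and the inductive long-exact-sequence argument you sketch is left entirely unproven. More importantly, you have misplaced where the difficulty lies.

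The genuine gap is in your steps (2)--(4). The ``width bound'' is not linear-algebra bookkeeping: it \emph{is} the finite-generation statement in the $n$-direction, and it is the central content of the paper. There is no a priori reason that a degree-$d$ $p$-syzygy ``sees'' only boundedly many tensor factors; positing the existence of $N(p)$ is assuming the theorem, and nothing in the shape of the Koszul complex or the transition maps (B2) supplies it. In the paper this is where all the machinery goes: one shows $F_p^{(d)}$ is a subquotient of the free $\Delta$-module $\Phi(T_d)$ via the Koszul complex (Theorem~\ref{thm:delta-small}), and then that $\Phi(T_d)$ is a noetherian $\Delta$-module (Theorem~\ref{tnnoeth}); the latter rests on Proposition~\ref{delta-w} (every $\Delta$-submodule of $\Phi(T_d)$ is a module over the invariant algebra $W_d^{S_d}$), on the noetherianity of algebras in $\Sym(\mc{S})$ generated in order one (Theorem~\ref{wnnoeth}), which reduces via Weyl's lemma to the ordinary Hilbert basis theorem, and on Proposition~\ref{wninv} to pass to $S_d$-invariants. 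Your step (3) is the one part that really is comparatively soft (it is the row bound/Weyl's lemma ingredient), but without a proof of the width statement the argument does not close; in particular ``determined by'' is weaker than ``generated by,'' so the finite spanning set you extract in step (4) yields master syzygies only if the generation claims in (2)--(3) are actually established, which is precisely what is missing.
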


Of course, the ``master syzygies'' are just generators for $F_p$.  As previously mentioned, our proof of this
theorem provides an algorithm to compute a set of generators.

\subsection{The second theorem}

Theorem~\ref{thm2} tells us that all $p$-syzygies can be derived from a finite list of $p$-syzygies via some simple
operations.  However, this by itself tells us very little about the structure of the space $F_p(V, L)$ for particular
values of $(V, L)$.  Our second theorem attempts to close this gap.  The full version of the theorem requires some
set-up, so we begin by stating a simple, elementary version of it:

\begin{Theorem}[Preliminary version]
Fix non-negative integers $p$ and $d$.  Then the series
\begin{displaymath}
\sum_{n=1}^{\infty} \dim F_{p,n}(\C^d, \ldots, \C^d) \cdot t^n
\end{displaymath}
is a rational function.
\end{Theorem}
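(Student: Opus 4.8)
The plan is to deduce rationality of the Hilbert-type series from the finite generation of $F_p$ (Theorem~\ref{thm2}) by grading everything by internal degree and tracking the combinatorics of how a finitely generated $\Delta$-module is built from its generators. The starting point is the observation that each $F_p$ is itself graded: $F_p = \bigoplus_d F_{p}^{(d)}$, where $F_p^{(d)}(V,L) = \Tor^{P(V,L)}_p(R(V,L),\C)_d$ is cut out by the internal degree $d$ coming from the grading on $P(V,L)$. For fixed $p$ and fixed $d$, the piece $F_{p}^{(d)}$ is again a $\Delta$-module (the $\Delta$-module structure maps are degree-preserving), and one should check it is finitely generated — indeed a finite set of generators for $F_p$ lives in finitely many degrees, and the degree-$d$ part is generated by the images of the degree-$\le d$ generators under degree-preserving operations; more carefully, one argues that finite generation of $F_p$ forces each graded piece to be finitely generated, using that a generator of $\Delta$-degree $m$ can only contribute to $F_n^{(d)}$ for $n$ in a bounded range once $d$ is fixed.

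Next I would make precise the sense in which a finitely generated $\Delta$-module $F'$ with generators $s_1,\dots,s_r$ in families $(V^{(i)},L^{(i)})$ controls the dimensions $\dim F'(\C^d,\dots,\C^d)$ with $n$ copies. A morphism in $\Vec^\Delta$ from $(V^{(i)},L^{(i)})$ to an $n$-fold family $(\C^d)^n$ consists of a surjection $\{1,\dots,n\} \twoheadrightarrow L^{(i)}$ together with a linear map $V^{(i)}_x \to (\C^d)^{\otimes(\text{fiber over }x)}$ for each $x$. Because $F'$ is generated by the $s_i$, every element of $F'(\C^d,\dots,\C^d)$ is a sum of images of the $s_i$ under such morphisms; so $F'(\C^d,\dots,\C^d)$ is a quotient of a direct sum, over surjections $\{1,\dots,n\}\twoheadrightarrow L^{(i)}$ and over $i$, of $\Hom$-spaces $\bigotimes_{x\in L^{(i)}}\Hom(V^{(i)}_x,(\C^d)^{\otimes f(x)})$ tensored with the span of $s_i$. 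The dimension of each such $\Hom$-space is $\prod_{x} (\dim V^{(i)}_x \cdot d^{f(x)})$ — wait, more simply each factor has dimension $(\dim V_x^{(i)})\cdot d^{|f^{-1}(x)|}$, and summing the product over all set partitions of $\{1,\dots,n\}$ into $|L^{(i)}|$ labeled blocks produces, as a function of $n$, a linear combination of terms of the form $(\text{constant})^n$ times a polynomial in $n$ — concretely it is governed by the generating function $\big(\sum_{k\ge 1} \binom{\cdot}{\cdot} d^k t^k/\cdots\big)$, whose exponential generating function in $t$ for surjections onto an $\ell$-element set is $(e^{dt}-1)^\ell$ up to the appropriate normalization, an entire function, and after the Borel-type bookkeeping the ordinary generating function $\sum_n (\text{upper bound})_n t^n$ is rational.

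The remaining and genuinely delicate step is to pass from an \emph{upper bound} on $\dim F_{p,n}(\C^d,\dots,\C^d)$ (coming from a surjection of $\Delta$-modules from a free one) to the \emph{exact} value. This requires understanding the \emph{syzygies of the $\Delta$-module $F_p^{(d)}$ itself} — i.e., building a resolution of $F_p^{(d)}$ by finitely generated ``free'' $\Delta$-modules. The key input will be a Noetherianity statement: that a sub-$\Delta$-module of a finitely generated $\Delta$-module is finitely generated, or at least that one can iterate the ``take kernel of a surjection from a free object'' construction finitely many times (or with controlled growth). With such a finite free resolution $\cdots \to G_1 \to G_0 \to F_p^{(d)} \to 0$ in hand, each $G_j$ has a Hilbert series of the rational form computed above, and $\sum_n \dim F_{p,n}^{(d)}(\C^d,\dots,\C^d) t^n = \sum_j (-1)^j (\text{Hilbert series of } G_j)$ is then a finite alternating sum of rational functions, hence rational; summing over the finitely many $d$ (for the unbounded-$d$ version one fixes $d$, so this is immediate) finishes the argument. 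I expect this Noetherianity/finite-resolution step to be the main obstacle, since it is exactly the point where one needs the internal structure theory of $\Delta$-modules rather than just the definition of finite generation — I would prove it by a Gröbner-style argument on the combinatorial category $\Vec^\Delta$, well-ordering the monomials (surjections together with tensor-coordinate data) and invoking a Dickson-type lemma to terminate.
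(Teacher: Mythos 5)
Your reduction to the graded pieces, and the rationality of the series attached to a \emph{free} $\Delta$-module evaluated on constant families $(\C^d,\ldots,\C^d)$ (multinomial sums of the kind handled in Lemma~\ref{sumlem}), are both fine in spirit. The genuine gap is the step you yourself flag as delicate: you need a \emph{finite} resolution of $F_p^{(d)}$ by finitely generated free $\Delta$-modules in order to take a finite alternating sum of rational Hilbert series, and the noetherianity you propose to prove (by Gr\"obner/Dickson-type arguments) does not give this. Noetherianity only guarantees that each successive syzygy module is finitely generated, i.e.\ it produces a resolution by finite free objects of \emph{a priori infinite length}; it says nothing about projective dimension, and an infinite alternating sum of rational functions need not be rational. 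This is exactly the obstruction the paper points out in the analogous and closely related setting of twisted commutative algebras: $\Sym(U\langle 1\rangle)$ has infinite global dimension because no wedge power of $U\langle 1\rangle$ vanishes, so the Koszul complex never terminates and the ``resolve by free modules and sum'' strategy breaks down. Nothing in your proposal addresses termination for $\Delta$-modules (the paper itself does not even know whether $L^i\Psi$ of a finitely generated $\Delta$-module vanishes for large $i$), so as written the argument does not close. A secondary point: your two uses of $d$ collide --- in the statement $d$ is $\dim \C^d$, while your grading argument uses $d$ for internal degree --- and to pass from the graded pieces back to the full $F_{p,n}(\C^d,\ldots,\C^d)$ you need the finite support in internal degree (Proposition~\ref{degbd}, degrees $p+1,\ldots,2p$ for the Segre), which you gesture at but should state as an input.

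For comparison, the paper never resolves $F_p^{(d)}$ by free $\Delta$-modules. Instead it shows $F_p^{(d)}$ is \emph{small}, i.e.\ a subquotient of a sum of $\Phi(T_n)$'s, by exhibiting it as the homology of a Koszul-type complex of quotients of $\Phi(T_n)$'s (Theorem~\ref{thm:delta-small}); then Proposition~\ref{delta-w} and Proposition~\ref{delta-filt} convert $\Delta$-submodule questions into module questions over the algebras $W_n^{S_n}$ in $\Sym(\mc{S})$; Theorem~\ref{hilbert3} transports the Hilbert series of such modules to equivariant Hilbert series of finitely generated modules over ordinary polynomial rings (via evaluation on finitely many auxiliary spaces and Weyl's integration formula), and only there --- where global dimension \emph{is} finite --- does the minimal-free-resolution argument get used, with Lemma~\ref{sumlem} finishing the combinatorics. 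If you want to salvage your approach, the missing ingredient you would have to supply is a finiteness theorem for projective (or suitably controlled) resolutions of finitely generated $\Delta$-modules, which is a substantially harder statement than noetherianity and is not known.
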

\setcounter{Theorem}{1}

The series in the above theorem counts the number of $p$-syzygies of the Segre embedding of $(\P^{d-1})^n$.  Thus
the result states that these numbers vary in a predictable manner with $n$.

The above preliminary version of the theorem falls short of what we would like, in three ways.
First, it only deals with syzygies of $(\P^{d-1})^n$, rather than more general products.  Second, it only gives us the
dimension of $F_{p,n}$, rather than the dimensions of its graded pieces.  And third, and most importantly, it gives no
information on the action of the general linear groups on the spaces of syzygies.  The full version of the theorem
addresses all of these issues.

Before properly stating the theorem, we must introduce a generating function.  For a partition $\lambda$,
write $\bS_{\lambda}$ for the corresponding Schur functor.  (See \S \ref{ss:schur} for a
review of this theory.)  By general theory, we have a decomposition
\begin{displaymath}
F_{p,n}(V_1, \ldots, V_n)
=\bigoplus_{i \in I_{p, n}} \bS_{\lambda_{1,i}}(V_1) \otimes \cdots \otimes \bS_{\lambda_{n,i}}(V_n)
\end{displaymath}
for some index set $I_{p, n}$ and partitions $\lambda_{i,j}$ (depending on $n$ and $p$).  It is not difficult to show
that $I_{p, n}$ is finite.  The above decomposition holds as functors $\Vec^n \to \Vec$, i.e, the set $I_{p, n}$ and
the partitions $\lambda_{i,j}$ do not depend on $(V_1, \ldots, V_n)$.  Define
\begin{displaymath}
f_{p,n}^*=\sum_{i \in I_{p, n}} s_{\lambda_{1,i}} \cdots s_{\lambda_{n,i}},
\end{displaymath}
regarded as a polynomial in (commuting) formal variables $s_{\lambda}$.  The functor $F_{p,n}$ can be
recovered from $f_{p,n}^*$ --- allowing the variables to commute does not lose information since the functor
$F_{p,n}$ has an $S_n$-equivariance.  (Note, however, that the data of the $S_n$-equivariance is not recorded in
$f_{p,n}^*$.)  Now define
\begin{displaymath}
f_p^*=\sum_{n=1}^{\infty} f_{p,n}^*,
\end{displaymath}
a power series in the variables $s_{\lambda}$.  A priori, there could be infinitely many variables occurring in
$f_p^*$; we will show that this is not the case.  Our second theorem is then:

\begin{Theorem}
\label{thm3}
The series $f_p^*$ is a rational function of the $s_{\lambda}$.
\end{Theorem}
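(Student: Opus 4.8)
The plan is to reduce Theorem~\ref{thm3} to a structural statement about the $\Delta$-module $F_p$ (which is finitely generated by Theorem~\ref{thm2}) and then prove that finitely generated $\Delta$-modules have rational ``Hilbert series'' in the Schur-algebra sense. The key point is that the generating function $f_p^*$ is an invariant of the $\Delta$-module $F_p$ alone: knowing the decomposition of $F_{p,n}(V_1,\ldots,V_n)$ into products of Schur functors, for all $n$, is exactly the data of the composition factors (with multiplicity) of the $S_n$-equivariant polynomial functors underlying $F_p$. So it suffices to define, for an arbitrary finitely generated $\Delta$-module $F$, the series $f^*(F)=\sum_{n\ge 1}\sum_{i} s_{\lambda_{1,i}}\cdots s_{\lambda_{n,i}}$ and show it is rational in the $s_\lambda$.

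First I would establish a few preliminary reductions. One needs to know that $F_{p,n}$ is a polynomial functor of bounded degree (uniformly in $n$ for fixed $p$, or at least with degree growing controllably), so that only finitely many $s_\lambda$ occur --- this should follow because $F_p$ is generated by finitely many elements, each living in some $F_p(V,L)$ of bounded total degree, and the $\Delta$-module operations (B2) are built from tensor products and symmetric powers, which have bounded effect on degree per step. Next, rationality of a sum of products of formal variables is preserved under the natural bookkeeping operations: if $f^*(F')$ and $f^*(F'')$ are rational and $0\to F'\to F\to F''\to 0$ is a short exact sequence of $\Delta$-modules, then $f^*(F)=f^*(F')+f^*(F'')$ is rational. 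So by dévissage it is enough to treat the ``principal'' or ``free'' $\Delta$-modules --- those generated by a single element --- since any finitely generated $F$ admits a finite filtration whose graded pieces are quotients of such principal modules, and quotients are handled by the exact-sequence observation together with an induction on the generating degree.

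The heart of the argument is therefore the computation of $f^*$ for a principal $\Delta$-module $P_{(W,M)}$ co-represented, in the sense of the category $\Vec^\Delta$, by a single family $(W,M)$ with $|M|=m$: its value on $(V,L)$ is the direct sum over all surjections $\phi\colon L\twoheadrightarrow M$ of $\bigotimes_{x\in M}\mathbf{S}_{?}\!\big(\bigotimes_{y\in\phi^{-1}(x)}V_y\big)$-type terms coming from $\Hom$-sets in $\Vec^\Delta$, i.e.\ built from the composition $L\to M$ by tensoring the $V_y$'s over fibers and applying a fixed multi-Schur functor indexed by $W$. Passing to the generating function, summing over $n=|L|$ and over surjections $L\twoheadrightarrow M$ organizes itself as a species-type exponential-to-ordinary manipulation: the choice of a surjection onto an $m$-element set, followed by the plethystic combination $s_{\mu}[\,s_{\nu_1}+s_{\nu_2}+\cdots\,]$ that records ``tensor together the Schur functors assigned to the fibers and then apply $\mathbf{S}_\mu$''. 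Because $W$ is fixed and finite, this is a \emph{finite} plethystic expression in the single geometric-type series $g=\sum_{n\ge 1}(\text{fiber contributions})$, and $g$ itself satisfies $g=$ (a fixed polynomial in the $s_\lambda$)$\cdot\tfrac{1}{1-g}$-style functional equation or is outright a rational combination --- in any case $g$, being essentially $\sum_{n\ge1}(\sum_\lambda s_\lambda)^{n}$ truncated by the bounded-degree constraint, is visibly a rational function of the $s_\lambda$. Substituting rational $g$ into the fixed finite plethysm, and noting plethysm by a Schur function is a polynomial operation, yields rationality of $f^*(P_{(W,M)})$, hence of $f_p^*$.

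I expect the main obstacle to be the plethystic bookkeeping in the last paragraph: one must check carefully that summing the $\Delta$-module structure maps over all $n$ and all surjections $L\twoheadrightarrow M$ really does assemble into a \emph{finite} algebraic (indeed rational) expression in finitely many $s_\lambda$, rather than an a priori infinite plethystic series. This requires (i) a uniform degree bound so that for each monomial $s_{\lambda_1}\cdots s_{\lambda_n}$ only boundedly many $\lambda$'s are nonzero, allowing one to work in a polynomial ring in finitely many variables, and (ii) the combinatorial identity that the surjection-counting generating function for ``partition $L$ into $m$ nonempty blocks and apply a fixed functor to each'' is the $m$-th power (up to a harmless automorphism factor) of a single block-generating series --- which, once one allows the $s_\lambda$ to commute as the theorem statement does, collapses to an ordinary power series manipulation. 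Everything else (additivity over exact sequences, the existence of a finite filtration by principal quotients, boundedness of $I_{p,n}$) is comparatively routine given Theorem~\ref{thm2}.
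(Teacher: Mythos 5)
Your reduction has a genuine gap at the d\'evissage step, and it is exactly the obstruction the paper is built to avoid. You filter a finitely generated $\Delta$-module by cyclic pieces, write each as a quotient $Q$ of a principal (free) module $P$, and propose to handle $Q$ via the exact sequence $0\to K\to P\to Q\to 0$ ``together with an induction on the generating degree.'' But $K$ is an arbitrary submodule of $P$, i.e.\ again a general finitely generated $\Delta$-module, and no invariant decreases: kernels of surjections from free modules typically need generators of larger order, and free objects here have \emph{infinite} global dimension (the paper flags precisely this at the start of \S\ref{s:hilbert}: the Koszul complex for $\Sym(U\langle 1\rangle)$ never terminates), so there is no finite free resolution to fall back on. In effect your plan requires rationality of $H^*_M$ for \emph{every} finitely generated $\Delta$-module, which the paper explicitly states it cannot prove (remark following Theorem~\ref{hilbert5}); it proves it only for \emph{small} $\Delta$-modules. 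Correspondingly, the actual proof of Theorem~\ref{thm3} does not take Theorem~\ref{thm2} as input: it shows $F_p^{(d)}$ is small because it is a subquotient of Koszul terms that are quotients of $\Phi(T_d)$ (Theorem~\ref{thm:delta-small}), uses Proposition~\ref{delta-w} to convert the $\Delta$-structure into a $W_d^{S_d}$-module structure in $\Sym(\mc{S})$, and then descends the ladder $\Sym(\mc{S})\to\Sym(\Vec)\to$ equivariant Hilbert series of ordinary polynomial rings (Theorem~\ref{hilbert3}, Proposition~\ref{hilbert3-4}, Theorem~\ref{hilbert2}), finally summing over the finitely many degrees allowed by Proposition~\ref{degbd}. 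Some such extra structure beyond finite generation is indispensable in your argument as well.

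A secondary inaccuracy: even your base case is not ``visibly rational'' as described. The value of a free $\Delta$-module involves $\bS_{\mu}$ applied to tensor products $V_{y_1}\otimes\cdots\otimes V_{y_k}$ over fibers, so the resulting series is governed by iterated Kronecker coefficients (inner structure), not by an outer-plethysm/geometric-series manipulation in $\sum_{\lambda}s_{\lambda}$; allowing the $s_{\lambda}$ to commute does not remove this. This part is repairable --- the character-theoretic diagonalization in Lemma~\ref{euler-3} does it, yielding in the starred normalization rational expressions built from factors like $(1-s'_{\mu})^{-1}$ --- but even granting the free case, the d\'evissage gap above remains and is the essential missing idea.
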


This theorem completely encompasses the preliminary version, but is much stronger.  Let us give an example to
illustrate how to extract information from $f_p^*$.  From the computations of \S \ref{ss:smallp}, we obtain
\begin{displaymath}
f_1^*=\frac{1-s}{(1-s)^2-w^2} - \frac{1}{1-s},
\end{displaymath}
where $s=s_{(2)}$ and $w=s_{(1,1)}$.  Developing the above into a power series, we obtain
\begin{displaymath}
f_1^*= w^2  + 3sw^2 + (6w^2s^2+w^4) + \cdots
\end{displaymath}
The second order term of this series tells us that $F_{1,2}(V_1,V_2)$ is isomorphic
to $\lw{2}{V_1} \otimes \lw{2}{V_2}$.  The third order terms tells us that $F_{1,3}(V_1,V_2,V_3)$ is isomorphic to
\begin{displaymath}
\big( \Sym^2{V_1} \otimes \lw{2}{V_2} \otimes \lw{2}{V_3} \big) \oplus
\big( \lw{2}{V_1} \otimes \Sym^2{V_2} \otimes \lw{2}{V_3} \big) \oplus
\big( \lw{2}{V_1} \otimes \lw{2}{V_2} \otimes \Sym^2{V_3} \big).
\end{displaymath}
In general, we can recover the isomorphism class of $F_{p,n}(V_1, \ldots, V_n)$ as a representation of $\GL(V_1)
\times \cdots \times \GL(V_n)$ from the order $n$ term of $f_{p,n}^*$.  In particular, we can recover the dimensions of
the graded pieces of $F_{p,n}(V_1, \ldots, V_n)$ from $f_{p,n}^*$, as these pieces can be detected by the group action.

As with Theorem~\ref{thm2}, the proof of Theorem~\ref{thm3} provides an algorithm to compute the numerator and
denominator of $f_p^*$.  Thus, essentially all information about all $p$-syzygies of Segre embeddings can be
determined from a single straightforward calculation.  Unfortunately, this algorithm has complexity comparable to
the previous one and is worthless in practice.

The leading term of $f_p^*$ is known by Lascoux's work (\S \ref{ss:lascoux}).  We will compute a
certain Euler characteristic involving the $f_p^*$'s (\S \ref{ss:euler}).  By known vanishing results, this allows us
to compute $f_1^*$, $f_2^*$, $f_3^*$ and part of $f_4^*$ (\S \ref{ss:smallp}).  In particular, this gives an
essentially complete description of the module of $p$-syzygies for $p=1,2,3$.  We have not been able to compute beyond
these examples, however.

\subsection{Syzygies of $\Delta$-schemes}

Given a finite family of vector spaces $(V_1, \dots, V_n)$ we have the Segre variety $X_n(V_1, \ldots, V_n) \subset
\P(V_1 \otimes \cdots \otimes V_n)$.  The variety $X_n$ is functorial in the $V_i$'s and $S_n$-equivariant.
Furthermore, we have inclusions
\begin{equation}
\label{eq:incl}
X_{n+1}(V_1, \ldots, V_{n+1}) \subset X_n(V_1, \ldots, V_{n-1}, V_n \otimes V_{n+1}).
\end{equation}
These are in fact the only properties of the Segre embedding we need for our theory to apply, as we now explain.

A \emph{$\Delta$-scheme} $X$ is a rule assigning to each finite family of vector spaces $(V_1, \ldots,
V_n)$ a closed subscheme $X_n(V_1, \ldots, V_n)$ of $\P(V_1 \otimes \cdots \otimes V_n)$ which is functorial,
$S_n$-equivariant and such that the inclusion \eqref{eq:incl} holds.  The Segre embeddings constitute an example of a
$\Delta$-scheme, but there are many others:  for instance, the secant varieties of the Segre are $\Delta$-schemes.

Let $X$ be a $\Delta$-scheme.  Let $F_p(V, L)$ be the space of $p$-syzygies of the embedding $X(V, L) \to
\P(\bigotimes_{x \in L} V_x)$.  Then $F_p$ forms a $\Delta$-module.  Let $F_p^{(d)}$ denote the degree $d$ piece of
$F_p$ and let $(f_p^*)^{(d)}$ denote the series defined from $F_p^{(d)}$.  Then we have the following result:

\begin{Theorem}
\label{thm4}
The $\Delta$-module $F_p^{(d)}$ is finitely generated, and the series $(f_p^*)^{(d)}$ is rational.
\end{Theorem}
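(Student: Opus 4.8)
The plan is to reduce Theorem~\ref{thm4} to the corresponding statements for the Segre $\Delta$-schemes, namely Theorems~\ref{thm2} and~\ref{thm3}, by isolating what is really used about the Segre embedding in their proofs. The key point is that the only geometric inputs invoked are: (i) the functoriality and $S_n$-equivariance of the varieties $X_n(V_1,\dots,V_n)$; and (ii) the inclusion \eqref{eq:incl}, which gives the closed immersion $X_{n+1}(V_1,\dots,V_{n+1}) \hookrightarrow X_n(V_1,\dots,V_{n-1},V_n\otimes V_{n+1})$ sitting inside the corresponding inclusion of projective spaces $\P(\bigotimes V_i) = \P(\bigotimes V_i)$ (the ambient space does not change under this operation). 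From (i) one gets that each $F_p(V,L) = \Tor^{P(V,L)}_p(\mc O_{X(V,L)}, \C)$ is a functor on $\Vec^{\Delta}$, exactly as in the Segre case; from (ii), together with the compatibility of $\Tor$ with the base-change maps on coordinate rings induced by \eqref{eq:incl}, one gets the transition maps (B2) making $F_p$ a $\Delta$-module. Since the grading on $P(V,L)$ is preserved by all of these operations, the degree $d$ piece $F_p^{(d)}$ is a $\Delta$-submodule of $F_p$, hence itself a $\Delta$-module.

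First I would record the general principle, presumably proved in the body of the paper, that a $\Delta$-submodule of a finitely generated $\Delta$-module is finitely generated --- this is the Noetherianity-type statement that must underlie Theorem~\ref{thm2} --- or, alternatively, that $F_p$ itself is finitely generated whenever $X$ is a $\Delta$-scheme, by rerunning the proof of Theorem~\ref{thm2} verbatim with the structure sheaf of $X(V,L)$ replacing that of the Segre variety. In that proof, the only properties used are that the coordinate rings $P(V,L)$ and the quotients $\mc O_{X(V,L)}$ assemble into "$\Delta$-algebras" (functors to graded rings with the transition maps), and that $P(V,L)$ is a polynomial ring of the expected shape; both hold for an arbitrary $\Delta$-scheme. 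So finite generation of $F_p$ holds in this generality, and then $F_p^{(d)}$, being a finitely-generated-in-degree-$\le d$ piece, is finitely generated as a $\Delta$-module (it is generated by $F_p^{(d)}(V,L)$ over the finitely many $(V,L)$ appearing among the generators of $F_p$, intersected with degree $d$). This establishes the first assertion.

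For the rationality of $(f_p^*)^{(d)}$, I would observe that the passage from "finitely generated $\Delta$-module $G$" to "$f_G^*$ is a rational function of the $s_\lambda$" is itself a general theorem about $\Delta$-modules --- this is the abstract statement that Theorem~\ref{thm3} specializes --- and that it only requires $G$ to be a finitely generated $\Delta$-module with values in finite-length (polynomial) $\GL$-representations in each $\Vec^n$. The $\Delta$-module $F_p^{(d)}$ satisfies this: $F_p^{(d)}(V,L) = \Tor^{P(V,L)}_p(\mc O_{X(V,L)},\C)^{(d)}$ is a polynomial functor of bounded degree (degree $d$) in each variable, hence decomposes into finitely many Schur-functor boxes as in the setup preceding Theorem~\ref{thm3}. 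Applying the general rationality theorem to $G = F_p^{(d)}$ yields that $(f_p^*)^{(d)} = f_{F_p^{(d)}}^*$ is rational in the $s_\lambda$.

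The main obstacle is making precise claim (i)--(ii) above, i.e.\ checking that the transition maps for a general $\Delta$-scheme genuinely satisfy the (unstated) coherence axioms in (B2) of the definition of a $\Delta$-module. This amounts to verifying that the maps on $\Tor$ induced by \eqref{eq:incl} are associative and $S_n$-equivariant in the appropriate sense; this should follow formally from the corresponding coherences for the inclusions \eqref{eq:incl} of schemes (which are part of the definition of a $\Delta$-scheme) together with functoriality of $\Tor$, but it requires some bookkeeping. A secondary, milder point is confirming that the two abstract theorems on $\Delta$-modules (finite generation is inherited by graded pieces/submodules; finitely generated $\Delta$-modules have rational $f^*$) are stated in the body in sufficient generality to apply to $F_p^{(d)}$ --- which they should be, since the proofs of Theorems~\ref{thm2} and~\ref{thm3} never use more than the $\Delta$-module structure once $F_p$ is in hand. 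Granting those, Theorem~\ref{thm4} follows by specialization.
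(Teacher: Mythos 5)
Your reduction runs in the wrong direction, and both pivots it rests on are unavailable. In the paper, Theorems~\ref{thm2} and~\ref{thm3} are \emph{consequences} of Theorem~\ref{thm4}: one first proves the general statement for $\Delta$-schemes degreewise, and then the Segre case upgrades to all of $F_p$ only because of Proposition~\ref{degbd}, which bounds the degrees of $F_p$ using the fact that the Segre ideal has a quadratic Gr\"obner basis. That input is special to the Segre. Your claim that ``finite generation of $F_p$ holds in this generality'' by rerunning the proof of Theorem~\ref{thm2} verbatim for an arbitrary $\Delta$-scheme is therefore not just unjustified --- the paper explicitly expects it to be false: for a general $\Delta$-scheme, finite generation of $F_p$ is equivalent to $F_p^{(d)}=0$ for $d\gg 0$, which ``probably does not hold in full generality.'' So you cannot obtain finite generation of $F_p^{(d)}$ by intersecting a finite generating set of $F_p$ with degree $d$; there may be no such set.

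The second pivot fails similarly: the paper does not prove that every finitely generated $\Delta$-module has rational $H^*$. Rationality of Hilbert series is established only for \emph{small} $\Delta$-modules (Theorem~\ref{hilbert5}), and its proof is not formal from finite generation: it goes through Proposition~\ref{delta-filt}, i.e.\ the fact that a small $\Delta$-module has a filtration whose graded pieces are finitely generated modules over the invariant algebras $W_n^{S_n}$, which in turn rests on Proposition~\ref{delta-w} about $\Phi(T_n)$. (The paper explicitly says it cannot prove rationality for all finitely generated $\Delta$-modules.) The missing idea, which is the actual content of the paper's proof of Theorem~\ref{thm4}, is to exhibit $F_p^{(d)}$ as the homology of the degree-$d$ strand of the Koszul complex, a three-term complex of $\Delta$-modules with terms $M_{i,j}=R^{(i)}\boxtimes \Phi(\lw{j}{})$, and to check that each $M_{i,j}$ is a quotient of $\Phi(T_{i+j})$, hence small; since smallness passes to subquotients, $F_p^{(d)}$ is small (Theorem~\ref{thm:delta-small}), and then finite generation follows from noetherianity of $\Phi(T_n)$ (Theorem~\ref{tnnoeth}) and rationality from Theorem~\ref{hilbert5}. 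Your observations about the $\Delta$-module structure on $F_p$ coming from functoriality of $\Tor$ and the inclusions \eqref{eq:incl} are fine, but without the smallness argument the two finiteness assertions of Theorem~\ref{thm4} do not follow.
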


This result is slightly weaker than the two main theorems for Segre embeddings, since it only applies to the
graded pieces of $F_p$.  To get comparable results, one would need to know that $F_p$ is supported in only finitely
many degrees.  This is true in the case of Segres, but probably not true for a general $\Delta$-scheme.  In
\S \ref{s:finlev} we define a class of $\Delta$-schemes, those of \emph{finite level}, for which it seems plausible
that $F_p$ is supported in finitely many degrees.

\subsection{Outline of proofs}
\label{ss:outline}

We begin with a general study of $\Delta$-modules.  The heart of the paper is occupied with the proof of two
theorems:  (a) a small $\Delta$-module is noetherian; and (b) the Hilbert series of a small $\Delta$-module is
rational.  ``Small'' $\Delta$-modules form a certain subclass of $\Delta$-modules.  The class of small $\Delta$-modules
is closed under subquotients.   Theorem~\ref{thm4}
is then a formal consequence of (a) and (b) and the following observation:  there is a complex of small
$\Delta$-modules (a Koszul complex) whose homology is $F_p$.  The two stronger theorems for Segre embeddings are
obtained by applying the well-known result that the $p$-syzygies of the Segre embedding are supported in only
finitely many degrees.

We have not been able to find a direct method of studying $\Delta$-modules.  Instead, we access them through the
following ``ladder'':
\begin{displaymath}
\{\textrm{graded modules}\} \leftrightarrow \{\textrm{modules in $\Sym(\Vec)$}\} \leftrightarrow
\{\textrm{modules in $\Sym(\mc{S})$}\} \leftrightarrow \{\textrm{$\Delta$-modules}\}
\end{displaymath}
We define the middle two categories below.  The arrows here do not mean anything precise, only that the two
categories are related.  We prove our results about $\Delta$-modules by starting with the corresponding results
for graded modules (which are easy and well-known) and moving up the ladder.  This process is actually fairly
explicit:  for example, the Hilbert series of a $\Delta$-module is identified with the ($G$-equivariant) Hilbert
series of a module over a graded ring, which is constructed in an explicit manner from the $\Delta$-module.
This identification is the reason that the Hilbert series of a $\Delta$-module --- at least one which is described in
a reasonable manner --- is algorithmically computable.

\subsection{Future directions}

We prove a number of purely algebraic results (about $\Delta$-module, twisted commutative algebras, etc.) which seem to
scratch the surface of a much larger theory.  We hope to pursue some of this theory in the future.  We also hope
to refine our results on syzygies --- for instance, what can one say about the rational functions $f_p^*$?  A list
of some questions and problems along these lines appears in \S \ref{s:ques}.

Some of the results and methods we develop seem to apply to slightly different settings.  For example, if $X$ is a
projective variety with an action of a reductive group $G$, then the syzygies of the GIT quotients $X^n/\!/G$ (as
$n$ varies) have some of the algebraic structure that we observe here for the Segre varieties.  One can hope that
there are finiteness results in this setting.  We will return to this topic in a future paper.

\subsection*{Acknowledgments}

I would like to thank Aldo Conca, Tony Geramita, Ben Howard, Sarah Kitchen, Rob Lazarsfeld, Diane Maclagan, John
Millson, Lawrence O'Neil, Steven Sam and Ravi Vakil for helpful discussions.  In particular, I would like to thank
Sarah Kitchen, Rob Lazarsfeld and Ravi Vakil for their comments on drafts of this paper.

\section{$\Delta$-modules}

The purpose of this section is to introduce the algebraic objects that we will use in the rest of the paper, most
notably $\Delta$-modules.  We begin by quickly reviewing the theory of Schur functors and define the
Schur algebra $\mc{S}$, which we regard as a category.  We then discuss symmetric powers of semi-simple
abelian categories.  This operation is not strictly necessary for our purposes, but clarifies some later
definitions.  We then introduce the categories $\Sym(\Vec)$ and $\Sym(\mc{S})$ and give explicit models for
them which do not use the symmetric power construction.  Finally we discuss $\Delta$-modules, which are
objects of $\Sym(\mc{S})$ with an additional piece of structure.  The most important results of this section are
various statements that certain types of objects are noetherian.

Algebras in $\Sym(\Vec)$ are known as ``twisted commutative algebras,'' and there is some literature on them
(see \cite{Barratt}, \cite[Ch.~4]{Joyal}, \cite{GinzburgSchedler}); closely related is Joyal's theory of
tensorial species.  However, the existing literature --- at least that which we are aware of --- studies these
objects from a perspective different from our own.  In particular, we have not encountered the noetherian result we
prove about them in the literature.  The categories $\Sym(\mc{S})$ and $\Mod_{\Delta}$ are a bit more esoteric, and we
do not know of any occurrence of them in the literature.

\subsection{The Schur algebra}
\label{ss:schur}

We quickly review what we need about the Schur algebra.  Further discussion can be found in \cite[\S 6.1]{FH}.  Let
$\Vec$ denote the category of complex vector spaces.  A functor $\Vec \to \Vec$ is ``nice'' if it appears as a
constituent of a functor of the form $V \mapsto V^{\otimes n}$, or a (possibly infinite) direct sum of such functors.
Essentially every functor $\Vec \to \Vec$ one encounters naturally that does not use duality is nice; an example of a
non-nice functor is the double dual.  Let $\mc{S}$ denote the category of all nice functors.  It is a semi-simple
abelian tensor category which we call the \emph{Schur algebra}.  (By ``semi-simple'' here we mean that every object
is a possibly infinite direct sum of simple objects.)  The tensor product is the point-wise one:  $(F \otimes G)(V)
=F(V) \otimes G(V)$.

Let $\lambda$ be a partition of $n$; we denote this by $\vert \lambda \vert=n$ or $\lambda \vdash n$.  Let
$\Sp_{\lambda}$ be the irreducible complex representation of the symmetric group $S_n$ associated to $\lambda$.
For a vector space $V$ put $\bS_{\lambda}(V)=(V^{\otimes n} \otimes
\Sp_{\lambda})_{S_n}$, where the subscript denotes coinvariants and $S_n$ acts on $V^{\otimes n}$ by permuting tensor
factors.  Then $\bS_{\lambda}$ belongs to $\mc{S}$ and is a simple object; furthermore, every simple object is
isomorphic to $\bS_{\lambda}$ for a unique $\lambda$.  We use the convention that the partition $\lambda=(n)$ gives the
functor $\bS_{\lambda}=\Sym^n$ while the partition $\lambda=(1, \ldots, 1)$ gives the functor $\bS_{\lambda}=\lw{n}{}$.
We identify partitions with Young diagrams by the convention that $\lambda=(n)$ has one row and $\lambda=(1, \ldots,
1)$ has $n$ rows.

We will also need multivariate functors of vector spaces.  A functor $\Vec^k \to \Vec$ is ``nice'' if it appears as
a constituent of a functor of the form $(V_1, \ldots, V_k) \mapsto V_1^{\otimes n_1} \otimes \cdots \otimes
V_k^{\otimes n_k}$, or a (possibly infinite) direct sum of such functors.  The category of all nice functors is
again a semi-simple abelian tensor category, and is naturally equivalent to $\mc{S}^{\otimes k}$.  This means that any
nice functor $F:\Vec^k \to \Vec$ can be written as
\begin{displaymath}
F(V_1, \ldots, V_k)=\bigoplus_{i \in I} \bS_{\lambda_{1, i}}(V_1) \otimes \cdots \otimes \bS_{\lambda_{k, i}}(V_k)
\end{displaymath}
for some index set $I$ and partitions $\lambda_{i, j}$.  This expression is unique.  We say that a partition
$\lambda$ \emph{occurs} in $F$ if it is amongst the $\lambda_{i, j}$.  If $F:\Vec \to \Vec$ is a nice functor then
the functors $\Vec^2 \to \Vec$ given by mapping $(V, W)$ to $F(V \oplus W)$ and $F(V \otimes W)$ are both nice.
We thus get a co-addition map $a^*$ and a co-multiplication map $m^*$ from $\mc{S}$ to $\mc{S}^{\otimes 2}$.

Let $\mc{A}$ be a $\C$-linear abelian tensor category.  (Our tensor categories are always symmetric, i.e., there is a
given involution of functors $A \otimes B \to B \otimes A$.)  There is then a natural action of $\mc{S}$ on $\mc{A}$.
Given a partition $\lambda$ of $n$ and an object $A$ of $\mc{A}$, the object $\bS_{\lambda}(A)$ is given by
$(A^{\otimes n} \otimes \Sp_{\lambda})_{S_n}$.

\subsection{Symmetric powers of abelian categories}
\label{ss:symcat}

We now describe how one can form the symmetric algebra on semi-simple abelian categories.  The reader who is not
comfortable with this discussion need not worry:  all categories that we eventually use will admit concrete
descriptions.  We provide this discussion only to give some context and motivation for later constructions.

Let $\mc{A}$ be a semi-simple $\C$-linear abelian category.  One can then make sense of the tensor power
$\mc{A}^{\otimes n}$ (see \cite[\S 5]{Deligne} for a discussion of tensor products of abelian categories).
We define $\Sym^n(\mc{A})$ to be the category $(\mc{A}^{\otimes n})^{S_n}$, where the
superscript denotes homotopy invariants (equivariant objects).  Thus an object of $\Sym^n(\mc{A})$ is an object
$A$ of $\mc{A}^{\otimes n}$ together with an isomorphism $\sigma^* A \to A$ for each $\sigma \in S_n$,
satisfying the obvious compatibility conditions.  We define $\Sym(\mc{A})$ to be the sum of the categories
$\Sym^n(\mc{A})$ over $n \ge 0$.

The category $\Sym(\mc{A})$ has a natural tensor structure.  Multiplication involves averaging (induction).
Precisely, let $\uotimes$ denote the usual
concatenation tensor product $\mc{A}^{\otimes n} \otimes \mc{A}^{\otimes m} \to \mc{A}^{\otimes (n+m)}$.  If $A$ is an
object of $(\mc{A}^{\otimes n})^{S_n}$ and $B$ of $(\mc{A}^{\otimes m})^{S_m}$ then $A \uotimes B$ naturally has an
$S_n \times S_m$ equivariance.  One can then form the induction $\Ind_{S_n \times S_m}^{S_{n+m}}(A \uotimes B)$, which
is an object of $(\mc{A}^{\otimes (n+m)})^{S_{n+m}}$.  This is the product of $A$ and $B$ in $\Sym(\mc{A})$, which we
denote by $A \otimes B$.  Note that if $\mc{A}$ itself has a tensor structure then $(\mc{A}^{\otimes n})^{S_n}$ does as
well, which gives an alternate tensor structure on $\Sym(\mc{A})$.  When present, we will denote this tensor product by
$\boxtimes$ and call it the \emph{point-wise} tensor product.

\begin{remark}
The above definition looks more like the divided power algebra than symmetric algebra.  However, one can verify it
has the correct universal property.  We believe that in the setting of $\C$-linear abelian tensor
categories, the right analogue of divided powers is an action of the Schur algebra, with $\Sym^n$ taking the place
of $\gamma_n$.  Since all such categories have a natural action of the Schur algebra, they can all be considered
to have divided powers.  Thus the symmetric and divided power algebras on $\mc{A}$ coincide.  This is a conceptual
reason explaining why we can use invariants (rather than coinvariants) to form the symmetric algebra.
\end{remark}

As $\Sym(\mc{A})$ is an abelian tensor category, we can speak of algebras in it and modules over algebras.  (Algebras
will always be commutative, associative and unital.)  Let $A$ be an algebra.  We say that $A$ is \emph{finitely
generated} (as an algebra) if it is a quotient of $\Sym(F)$ for some finite length object $F$ of $\Sym(\mc{A})$.
Similarly, we say that an $A$-module $M$ is \emph{finitely generated} if it is a quotient of $A \otimes F$ for some
finite length $F$.  We say that $M$ is \emph{noetherian} if every ascending chain of submodules stabilizes.  We say
that $A$ is \emph{noetherian} (as an algebra) if every finitely generated $A$-module is noetherian.

The category $\Sym(\mc{A})$ is again semi-simple abelian, and its simple objects can be described easily.  Let $A$
be a simple object of $\mc{A}$ and let $\Sp_{\lambda}$ be an irreducible representation of $S_n$.  Then $\Sp_{\lambda}
\otimes A^{\uotimes n}$ has a natural $S_n$-equivariance and so defines an object of $\Sym^n(\mc{A})$; in fact, this is
nothing other than $\bS_{\lambda}(A)$ in the category $\Sym(\mc{A})$.  This object is simple, and all simple objects
are of the form $\bigotimes \bS_{\lambda_i}(A_i)$ where the $A_i$ are mutually non-isomorphic simple objects of
$\mc{A}$.

Let $K(-)$ denote the Grothendieck group of an abelian category, tensored with $\Q$.  We have a map
\begin{displaymath}
K(\Sym^n(\mc{A})) \to \Sym^n(K(\mc{A}))
\end{displaymath}
defined as follows:  first apply the functor $\Sym^n(\mc{A}) \to \mc{A}^{\otimes n}$, then use the identification
$K(\mc{A}^{\otimes n})=K(\mc{A})^{\otimes n}$, then project $K(\mc{A})^{\otimes n} \to \Sym^n(K(\mc{A}))$ and finally
divide by $n!$.  For $A \in \Sym^n(\mc{A})$ we let $[A]$ denote the corresponding class in $\Sym^n(K(\mc{A}))$.  We
also put $[A]^*=n! [A]$.  The above map is not injective, but only forgets the $S_n$-equivariant structure:
if $[A]=[B]$ then $A$ and $B$ have isomorphic images in $\mc{A}^{\otimes n}$.  Summing the above maps over $n$, we get
a map
\begin{displaymath}
K(\Sym(\mc{A})) \to \Sym(K(\mc{A})).
\end{displaymath}
Again, for $A \in \Sym(\mc{A})$ we let $[A]$ denote the corresponding class in $\Sym(K(\mc{A}))$; we define $[A]^*$ in
the obvious manner.  We have $[A \otimes B]=[A][B]$; the modified class $[-]^*$ does not satisfy this.  As an example,
if $A$ is an object of $\mc{A}$ then $[\Sym^n(A)]=\tfrac{1}{n!} [A]^n$ (which leads to the beautiful formula
$[\Sym(A)]=\exp([A])$).

We will often need to use the completion of $\Sym$, both in the setting of $\Z$-modules and abelian categories.  Whereas
$\Sym(\Z^n)$ is the ring of polynomials in $n$ variables, the completion is the ring of power series in $n$ variables.
We will not bother to introduce extra notation for the completion, as there should be no confusion.

\subsection{The category $\Sym(\Vec)$}

The following abelian tensor categories are equivalent:
\begin{enumerate}
\item The category $\Sym(\Vec)$.
\item The category of functors $\Cfs \to \Vec$, where $\Cfs$ is the category whose objects are finite sets and
whose morphisms are bijections.  Multiplication is given by convolution, using the monoidal structure on $\Cfs$ given
by disjoint union.  Precisely, if $V$ and $W$ are two functors $\Cfs \to \Vec$ then
\begin{displaymath}
(V \otimes W)_L=\bigoplus_{L=A \amalg B} V_A \otimes W_B,
\end{displaymath}
where the sum is over all partitions of $L$ into two disjoint subsets $A$ and $B$.  (We use subscripts to indicate the
value of the functor on a set).
\item The category of sequences $(V_n)_{n \ge 0}$, where $V_n$ is a vector space with an action of $S_n$.
Multiplication is given by the formula
\begin{displaymath}
(V \otimes W)_n=\bigoplus_{n=i+j} \Ind_{S_i \times S_j}^{S_n} (V_i \otimes W_j).
\end{displaymath}
\item The Schur algebra $\mc{S}$.  Multiplication is the point-wise tensor product.
\item The full subcategory of the representation category of $\GL(\infty)$ on objects which appear as a constituent
of a direct sum of tensor powers of the standard representation $\C^{\infty}$.  Here $\GL(\infty)$ is the union
of $\GL(n, \C)$ over $n \ge 1$.  Multiplication is the usual tensor product of representations.
\end{enumerate}
We briefly describe the various equivalences.  The equivalence between (b) and (c) is clear.  Since $\Vec^{\otimes
n}=\Vec$, the category of $S_n$-equivariant objects in $\Vec^{\otimes n}$ is just the representation category of $S_n$;
this gives the equivalence between (a) and (c).  The equivalence between (c) and (d) is through Schur-Weyl duality.
Precisely, given a sequence $(V_n)$ in the category (c), let $S:\Vec \to \Vec$ be the functor taking a vector space
$W$ to
\begin{displaymath}
S(W)=\bigoplus_{n \ge 0} (W^{\otimes n} \otimes V_n)_{S_n}.
\end{displaymath}
Then $(V_n) \mapsto S$ is the equivalence.  Regarding $\Sp_{\lambda}$ as an object of (c)
supported at $n=\vert \lambda \vert$, this equivalence maps $\Sp_{\lambda}$ to $\bS_{\lambda}$.
Finally, the equivalence of (d) and (e) is given by evaluating a Schur functor on $\C^{\infty}$.
We regard (b)--(e) as ``models'' for the category $\Sym(\Vec)$.  We name them the ``standard,'' ``sequence,''
``Schur,'' and ``$\GL$'' models, respectively.  (There are even more models for this category, but they are not
relevant for our purposes.)  We now come to an important definition:

\begin{definition}
A \emph{twisted commutative algebra} is an algebra in the category $\Sym(\Vec)$.
\end{definition}

As always, ``algebra'' means commutative, associative and unital.  In the standard model, a twisted commutative algebra
is a functor $A:\Cfs \to \Vec$ together with a multiplication map $A_L \otimes A_{L'} \to A_{L \amalg L'}$ satisfying
the appropriate identity.  In the $\GL$-model, a twisted commutative algebra is just a commutative $\C$-algebra, in the
usual sense, with an action of $\GL(\infty)$ by algebra homomorphisms (under which the algebra forms an appropriate
kind of representation).  The notion of a module over a twisted commutative algebra is evident.

Let $V$ be an object of $\Sym(\Vec)$, in the standard model.  By an \emph{element} of $V$ we mean an element of $V_L$
for some $L$; we then call $\# L$ its \emph{order}.  (It would be more natural to use the term ``degree'' here, but
we want to reserve that term for future use.)  Given a collection $S$ of elements of $V$ there is a unique
minimal subobject of $V$ containing $S$; we call it the subspace of $V$ \emph{generated} by $S$.
Similarly, if $A$ is a twisted commutative algebra and $S$ is a collection of elements of $A$ then one can speak of
the subalgebra of $A$ generated by $S$.  One can do the same for modules over $A$.  We therefore have a notion of
``finitely generated'' for algebras and modules; this agrees with the one defined in \S \ref{ss:symcat} using finite
length objects.

Let $U$ be a vector space.  We let $U\langle 1 \rangle$ be the object of $\Sym(\Vec)$ which is $U$ in order 1 and
0 in other orders.  We put $A=\Sym(U\langle 1 \rangle)$; this is the most important twisted commutative algebra
in this paper.  Here are precise descriptions of $U\langle 1 \rangle$ and $A$ in the various models:
\begin{itemize}
\item In the standard model, $U\langle 1 \rangle$ assigns to a finite set $L$ the vector space $U$ if $\# L=1$ and
the vector space 0 otherwise.  The algebra $A$ assigns to a finite set $L$ the vector space $U^{\otimes L}$.  The
multiplication map $A_L \otimes A_{L'} \to A_{L \amalg L'}$ is concatenation of tensors.  (Note:  $U^{\otimes L}$
is isomorphic as a vector space to $U^{\otimes n}$, where $n=\# L$, but is functorial in $L$.
It can be defined as the universal vector space equipped with a multi-linear map from $U \times L$.  We think of
the factors of a pure tensor in $U^{\otimes L}$ as being labeled with elements of $L$.)
\item In the sequence model, $U\langle 1 \rangle$ is the vector space $U$ in order 1 and 0 in other orders.  The
algebra $A$ is given by $A_n=U^{\otimes n}$, with its usual $S_n$ action; the multiplication map $A_i \otimes A_j
\to A_{i+j}$ is again concatenation of tensors.
\item In the Schur model, $U\langle 1 \rangle$ is the functor $U \otimes \Sym^1$.  The algebra $A$ is given by
$\Sym(U \otimes \Sym^1)$.  If $V$ is a vector space then $(U\langle 1 \rangle)(V)=U \otimes V$ and $A(V)=\Sym(U
\otimes V)$.
\item In the $\GL$-model, $U\langle 1 \rangle$ is $U \otimes \C^{\infty}$.  The algebra $A$ is $\Sym(U \otimes
\C^{\infty})$.  Thus, in this model, $A$ is a polynomial ring in infinitely many variables with $\GL(\infty)$ acting
by linear substitutions.
\end{itemize}
The following result underlies everything else in the paper.  We learned the key ideas of its proof from Harm Derksen
and Ben Howard.

\begin{theorem}
\label{tc:noeth}
A twisted commutative algebra finitely generated in order 1 is noetherian.
\end{theorem}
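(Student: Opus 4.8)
The plan is to work in the $\GL$-model, where the statement becomes: a polynomial ring $A = \Sym(U \otimes \C^\infty)$ in infinitely many variables, equipped with the natural $\GL(\infty)$-action, is $\GL(\infty)$-equivariantly noetherian, i.e. every finitely generated $\GL(\infty)$-equivariant $A$-module satisfies the ascending chain condition on $\GL(\infty)$-equivariant submodules. The central point is that $\GL(\infty)$-equivariant ideals (and submodules) are far more constrained than arbitrary ones: a $\GL(\infty)$-stable subspace of $A$ is a sum of Schur-functor constituents $\bS_\lambda(U \otimes \C^\infty)$, and as $\dim U$ and the ambient dimension grow, these behave like a single well-controlled family.

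First I would reduce to the case of ideals: since $A$ finitely generated in order $1$ means $M$ is a quotient of $A \otimes F$ for a finite-length $F$ in $\Sym(\Vec)$, a standard dévissage lets one assume $M = A$ itself, so it suffices to prove the ACC on $\GL(\infty)$-equivariant ideals of $A$ (and, by the same dévissage applied to submodules of $A^{\oplus k} \otimes \bS_\mu$, on equivariant submodules of the relevant free modules; I would set this up uniformly). Second, I would invoke a Gröbner/initial-ideal style argument adapted to the equivariant setting: choose a monomial order on the polynomial ring $A$ that is compatible enough with the $\GL(\infty)$-action (e.g. via a highest-weight/lex order on the variables $u_i \otimes e_j$), so that passing to initial ideals sends a $\GL(\infty)$-equivariant ideal to a monomial ideal that is stable under the monoid of ``shift'' endomorphisms $\C^\infty \to \C^\infty$. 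The problem then becomes a combinatorial one about monomial ideals in infinitely many variables that are invariant under these substitutions, and the key step is to prove a Dickson-type lemma: such invariant monomial ideals satisfy ACC. This is where the ideas attributed to Derksen and Howard enter — one embeds the relevant monomials into a well-partial-ordered set (a finite product of copies of $\C^\infty$-indexed monomials with the divisibility-plus-shift order) and applies Higman's lemma / Nash-Williams' theory of wqo's to conclude that there is no infinite antichain, hence no infinite strictly ascending chain.

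The main obstacle, and the step I would spend the most care on, is making the interaction between the monomial order and the $\GL(\infty)$-symmetry precise: a $\GL(\infty)$-equivariant ideal need not be generated by $\GL(\infty)$-highest-weight vectors in an obvious finite way, and its initial ideal under a naive order need not be stable under all the symmetries one wants. The fix is to use that every finite-dimensional piece sees only finitely many of the variables $e_j$, together with $S_\infty \subset \GL(\infty)$ acting by permuting the $e_j$: one shows the initial ideal is stable under the $S_\infty$-action and under the endomorphism monoid of $\C^\infty$, and that \emph{this} class of monomial ideals is what the Dickson/Higman argument applies to. Once the combinatorial ACC is established for such monomial ideals, the usual argument (an ascending chain of equivariant ideals with equal initial ideals must be eventually equal, since the quotient is a finite-dimensional-per-order object detected by leading terms) closes the proof. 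Finally I would note that the $\GL$-equivariant noetherianity transfers back across the equivalences of models to give the statement as phrased in $\Sym(\Vec)$.
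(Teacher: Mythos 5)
Your plan is correct in outline, but it is a genuinely different route from the paper's. The paper's proof is much shorter: writing $A=\Sym(U\otimes\C^{\infty})$ with $d=\dim U$, it uses the Cauchy decomposition $\Sym^k(U\otimes\C^{\infty})=\bigoplus_{\lambda}\bS_{\lambda}(U)\otimes\bS_{\lambda}(\C^{\infty})$ to see that every partition occurring has at most $d$ rows, so (Weyl's lemma) any $\GL(\infty)$-stable subspace is generated by its intersection with $\Sym^k(U\otimes\C^d)$; hence contraction of equivariant ideals (and, with a slight modification, of submodules of $A\otimes F$) to the finitely many variables of $\Sym(U\otimes\C^d)$ is injective, and the ordinary Hilbert basis theorem finishes. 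Your route is instead the Gr\"obner-degeneration/well-quasi-order argument in the style of Cohen, Aschenbrenner--Hillar and Hillar--Sullivant: pass to initial ideals for a term order compatible with the symmetry and prove a Dickson-type ACC for invariant monomial ideals via Higman's lemma. That works, and it is essentially the method the paper alludes to in Remark~\ref{delta-rem} for the stronger statement that all finitely generated $\Delta$-modules are noetherian; it buys robustness and generality at the cost of heavier combinatorics, whereas the paper's restriction-to-$\C^d$ argument is nearly immediate once the row bound is observed. Two points in your sketch need tightening. First, the initial ideal of an equivariant ideal is in general stable only under the monoid of strictly increasing reindexings of the chosen basis of $\C^{\infty}$, not under all of $S_{\infty}$ (and certainly not under the full endomorphism monoid); fortunately a $\GL(\infty)$-stable ideal is in particular stable under these shifts, and shift-stability is exactly what the Higman argument requires. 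Second, the reduction from modules to ideals is not the usual d\'evissage: $A\otimes\bS_{\mu}$ is not an iterated extension of copies of $A$, so ACC for equivariant ideals does not formally imply ACC for its equivariant submodules. You must either develop the term-order/wqo theory directly for such free modules, or use a device such as realizing $A\otimes(\C^{\infty})^{\otimes n}$ (of which $A\otimes\bS_{\mu}$ is a direct summand, $n=\vert\mu\vert$) as the multidegree-$(1,\ldots,1)$ component of $\Sym((U\oplus\C^n)\otimes\C^{\infty})$, so that chains of equivariant submodules transfer to chains of equivariant ideals in a larger polynomial ring of the same type. With those repairs your argument goes through.
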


\begin{proof}
A twisted commutative algebra which is finitely generated in order 1 is a quotient of $\Sym(U \langle 1 \rangle)$
for some finite dimensional vector space $U$.  Thus it suffices to show that such algebras are noetherian.  Fix $U$
and put $A=\Sym(U \langle 1 \rangle)$.
In the $\GL$-model, $A$ is given by $\Sym(U \otimes \C^{\infty})$.  Let $d=\dim{U}$.  The following lemma
shows that contraction from $\GL(\infty)$-stable ideals of $\Sym(U \otimes \C^{\infty})$ to ideals of
$\Sym(U \otimes \C^d)$ is injective.  As $\Sym(U \otimes \C^d)$ is a polynomial algebra in finitely many variables,
it is noetherian.  We conclude that $A$ is a noetherian module over itself.  A slight modification of this argument
shows that any finitely generated $A$-module is noetherian, which proves that $A$ is noetherian as an algebra.
\end{proof}

\begin{lemma}[Weyl]
Let $U$ be a vector space of dimension $d$.  If $W \subset \Sym^k(U \otimes \C^{\infty})$ is a $\GL(\infty)$-stable
subspace then $W$ is generated as a $\GL(\infty)$-module by $W \cap \Sym^k(U \otimes \C^d)$.
\end{lemma}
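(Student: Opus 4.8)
The plan is to exploit the $\GL(\infty)$-action together with a polarization/restitution argument. Write $E = U \otimes \C^{\infty}$, so $\Sym^k(E)$ is a polynomial space, and let $W \subset \Sym^k(E)$ be $\GL(\infty)$-stable. Set $W_0 = W \cap \Sym^k(U \otimes \C^d)$. We must show every $w \in W$ lies in the $\GL(\infty)$-submodule generated by $W_0$. Since $\GL(\infty) = \bigcup_m \GL(m, \C)$ and $w$ is a polynomial, $w$ already lives in $\Sym^k(U \otimes \C^m)$ for some finite $m \ge d$; so it suffices to prove the statement with $\C^{\infty}$ replaced by $\C^m$, i.e. that $W_m := W \cap \Sym^k(U \otimes \C^m)$ is generated as a $\GL(m)$-module by its intersection $W_0$ with the ``first $d$ coordinates'' subspace $\Sym^k(U \otimes \C^d)$. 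This is now a purely finite-dimensional statement about rational $\GL(m)$-representations.

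The key step is the following: because $k$ is fixed, every monomial of degree $k$ in the coordinates of $U \otimes \C^m$ involves at most $k$ of the $m$ factors $\C^1, \ldots, \C^m$; more precisely, $\Sym^k(U \otimes \C^m) = \sum_{|S| \le k} \Sym^k(U \otimes \C^S)$ where $S$ ranges over subsets of $\{1, \ldots, m\}$ and $\C^S$ is the span of the corresponding coordinate vectors. Given $w \in W_m$, decompose it along this sum and consider any summand supported on a subset $S$ with $|S| = r \le k$. If $r \le d$ we can move $\C^S$ inside $\C^d$ by an element of $\GL(m)$ (a coordinate permutation) to land in $W_0$. The real content is the case $d < r \le k$: here I would use that $w$ is fixed — up to the $\GL(m)$-action — under too few genuine variables to escape $U \otimes \C^d$ once we *restrict* appropriately. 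The clean way to organize this: apply the restitution operator $\rho: \Sym^k(U \otimes \C^m) \to \Sym^k(U \otimes \C^d)$ induced by a surjection $\C^m \to \C^d$, note $\rho$ is (up to scalar) obtained by averaging over $\GL(m)$-translates — equivalently $\rho(w)$ is a $\GL(m)$-combination of translates $g \cdot w$ lying in $\Sym^k(U \otimes \C^d)$, hence in $W_0$ — and then observe that $w$ can be reconstructed from finitely many such restitutions by inverting the (invertible, since $k$ is fixed and $d$ is large enough relative to the structure of each monomial) linear system of ``polarization'' identities. Concretely, polarizing a degree-$k$ form fully separates the $k$ tensor slots, and the original form is recovered by the standard restitution-of-polarization formula; since polarization only ever uses $k$ auxiliary variables and $k$ is fixed, all intermediate terms already lie in the span of $\GL(m)$-translates of elements of $\Sym^k(U \otimes \C^{\max(d,k)})$, and a further coordinate permutation (an element of $\GL(m)$) brings $\C^{\max(d,k)}$ down appropriately — but here one must be careful, as $\max(d,k)$ may exceed $d$.

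The subtle point — and what I expect to be the main obstacle — is precisely that last parenthetical: a naive polarization argument produces terms in $\Sym^k(U \otimes \C^k)$, not $\Sym^k(U \otimes \C^d)$, and when $d < k$ this is not yet inside $W_0$. The resolution must use that $W$ is $\GL(\infty)$-stable in an essential way, not merely $\GL(d)$-stable: one should realize $\Sym^k(U \otimes \C^d)$ as already containing, up to $\GL(d)$-action, ``enough'' of the polarized pieces because a degree-$k$ homogeneous polynomial in $d$ sets of variables, with the inner space $U$ also contributing slots, has its isotypic decomposition governed by partitions with at most $d$ rows in the $\C^{\infty}$-direction — and any $\GL(\infty)$-subrepresentation of $\Sym^k(U \otimes \C^{\infty})$ is determined by which such partitions (with at most $k$, hence after intersecting with the $\le d$ constraint, the relevant ones) it contains, by highest-weight theory. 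So the cleanest proof is representation-theoretic: decompose $\Sym^k(U \otimes \C^{\infty}) = \bigoplus_{\lambda \vdash k} (\bS_{\lambda}\, U) \otimes (\bS_{\lambda}\, \C^{\infty})$ via Cauchy's formula; a $\GL(\infty)$-stable $W$ is a sum of subspaces $W^{\lambda} \otimes \bS_{\lambda}(\C^{\infty})$ with $W^{\lambda} \subset \bS_\lambda(U)$; and since $\lambda \vdash k$ forces $\lambda$ to have at most $k$ rows — but crucially $\bS_\lambda(\C^{\infty})$ is nonzero on $\C^d$ exactly when $\lambda$ has at most $d$ rows, which can fail. The correct final statement must therefore be that $W$ is generated by $W \cap \Sym^k(U \otimes \C^d)$ where one checks that $\bS_\lambda(\C^d) \neq 0$ for *all* $\lambda \vdash k$ occurring, which holds because such $\lambda$ has at most $k$ parts and — wait — this needs $d \ge k$. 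I will need to check the intended hypothesis; if $d = \dim U$ is meant to satisfy $d \ge$ (number of rows of any relevant $\lambda$), which here it does since those $\lambda$ are partitions of $k$ fitting in a $d \times \infty$ box precisely when they have $\le d$ rows, and the $U$-side $\bS_\lambda(U) \ne 0$ already forces $\le d$ rows — that is the crux: $W^\lambda \subset \bS_\lambda(U) = 0$ unless $\lambda$ has $\le d$ rows, so only such $\lambda$ contribute, and for those $\bS_\lambda(\C^d) \ne 0$ too. Thus $W \cap \Sym^k(U \otimes \C^d) = \bigoplus_\lambda W^\lambda \otimes \bS_\lambda(\C^d)$ generates $W$ under $\GL(\infty)$ by highest-weight theory. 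This is the argument I would write up, with Cauchy's formula doing the real work.
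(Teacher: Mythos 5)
Your final argument --- decomposing $\Sym^k(U \otimes \C^{\infty}) = \bigoplus_{\lambda \vdash k} \bS_{\lambda}(U) \otimes \bS_{\lambda}(\C^{\infty})$ by Cauchy's formula, writing $W = \bigoplus_{\lambda} W^{\lambda} \otimes \bS_{\lambda}(\C^{\infty})$ since the $\bS_{\lambda}(\C^{\infty})$ are mutually non-isomorphic irreducibles, observing that $W^{\lambda} \subset \bS_{\lambda}(U) = 0$ unless $\lambda$ has at most $d$ rows, and concluding that $W \cap \Sym^k(U \otimes \C^d) = \bigoplus_{\lambda} W^{\lambda} \otimes \bS_{\lambda}(\C^d)$ generates $W$ because $\bS_{\lambda}(\C^d)$ generates $\bS_{\lambda}(\C^{\infty})$ for such $\lambda$ --- is correct and is exactly the paper's proof. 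The preceding polarization/restitution detour is an abandoned dead end and can simply be deleted.
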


\begin{proof}
Using the formula for the symmetric power of a tensor product (see \cite[Exercise~6.11(b)]{FH}), we obtain a diagram
\begin{displaymath}
\xymatrix{
\Sym^k(U \otimes \C^d) \ar@{=}[r] \ar@{^(->}[d] &
\bigoplus \bS_{\lambda}(U) \otimes \bS_{\lambda}(\C^d) \ar@{^(->}[d] \\
\Sym^k(U \otimes \C^{\infty}) \ar@{=}[r] &
\bigoplus \bS_{\lambda}(U) \otimes \bS_{\lambda}(\C^{\infty}) }
\end{displaymath}
The sums are taken over the partitions $\lambda$ of $k$.  Now, let $W$ be a $\GL(\infty)$-stable subspace
of $\Sym^k(U \otimes \C^{\infty})$.  Since the $\bS_{\lambda}(\C^{\infty})$ are mutually non-isomorphic irreducible
$\GL(\infty)$-modules, we
can write $W=\bigoplus W_{\lambda} \otimes \bS_{\lambda}(\C^{\infty})$, where $W_{\lambda}$ is a subspace of
$\bS_{\lambda}(U)$.  Note that if $\lambda$ has more than $d$ rows then $\bS_{\lambda}(U)=0$ and so $W_{\lambda}=0$.
The space $W \cap \Sym^k(U \otimes \C^d)$ is equal to $\bigoplus W_{\lambda} \otimes \bS_{\lambda}(\C^d)$.  Since
$\bS_{\lambda}(\C^d)$ generates $\bS_{\lambda}(\C^{\infty})$ whenever $\lambda$ has at most $d$ rows, we find that
$W \cap \Sym^k(U \otimes \C^d)$ generates $W$.
\end{proof}

Any twisted commutative algebra finitely generated in order 1 has the property that the partitions appearing in it
have a bounded number of rows.  Most of the results we prove about algebras generated in order 1, such as the above
theorem, can easily be extended to the larger class of algebras which have a bounded number of rows.  Going beyond this
class of algebras is harder though.  For instance, we do not know how much the order 1 hypothesis in
Theorem~\ref{tc:noeth} can be relaxed; see \S \ref{s:ques}.  We need one more general finiteness result about twisted
commutative algebras.

\begin{proposition}
\label{tc:inv}
Let $A$ be a noetherian twisted commutative algebra on which a reductive group $G$ acts.  Then $A^G$ is noetherian.  If
$M$ is a finitely generated $A$-module with a compatible action of $G$ then $M^G$ is a finitely generated $A^G$-module.
\end{proposition}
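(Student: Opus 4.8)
The plan is to pass to the $\GL$-model, where a twisted commutative algebra is an honest commutative $\C$-algebra $A$ equipped with a $\GL(\infty)$-action by algebra homomorphisms, and where the statement about noetherianity of $A^G$ becomes a statement about the commutative ring of invariants under the commuting action of $G$. The key point is that $\GL(\infty)$-equivariance and $G$-action together just amount to an action of $\GL(\infty) \times G$, and that $A^G$ inherits a $\GL(\infty)$-action making it again a twisted commutative algebra. So the task reduces to showing: if $A$ is noetherian as a twisted commutative algebra and $G$ is reductive acting compatibly, then the twisted commutative algebra $A^G$ is noetherian, and similarly $M^G$ is finitely generated over $A^G$ when $M$ is finitely generated over $A$.

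First I would reduce the module statement to the algebra statement by the standard Reynolds-operator trick, carried out inside $\Sym(\Vec)$. Since $G$ is reductive and acts compatibly with the $\GL(\infty)$-action, there is a $\GL(\infty)$-equivariant Reynolds operator (averaging over $G$) which is an $A^G$-module projection $M \to M^G$. Given a $\GL(\infty)$-equivariant generating set of $M$ over $A$ — which exists because $M$ is finitely generated, i.e.\ a quotient of $A \otimes F$ for a finite-length $F$ in $\Sym(\Vec)$ — applying the Reynolds operator to the images of $F$ produces a finite-length subobject of $M^G$ that generates $M^G$ over $A^G$: any element of $M^G$ is an $A$-combination of the chosen generators, and applying Reynolds writes it as an $A^G$-combination of their images. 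So it suffices to treat the case $M = A$, i.e.\ to show $A^G$ is a noetherian twisted commutative algebra.

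For the algebra statement I would mimic the classical Hilbert argument, but carefully inside the category $\Sym(\Vec)$ so as to use the noetherianity hypothesis on $A$, which (via Theorem~\ref{tc:noeth} and its proof) is really a statement about finitely generated $A$-modules. The idea: first show $A^G$ is a finitely generated twisted commutative algebra. The obstacle, and the heart of the proof, is exactly this finite generation — one must know that the $G$-invariants are generated (as a twisted commutative algebra) by a finite-length object of $\Sym(\Vec)$. Here I would argue that $A$ is a finitely generated $A^G$-module: this is the $\Sym(\Vec)$-analogue of the Hilbert–Noether finiteness theorem, and it follows because $A$ is noetherian over itself, the augmentation-ideal-type submodule $A_{+}^G \cdot A$ behaves well, and the Reynolds operator gives $A = A^G \oplus (\text{complement})$ as $A^G$-modules; one extracts a finite-length generating object for $A$ over $A^G$ from a finite generating object of $A$ as an algebra over $\C$ in $\Sym(\Vec)$ (which exists since finitely generated twisted commutative algebras are quotients of $\Sym(F)$, $F$ of finite length). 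Once $A$ is module-finite over $A^G$ and $A^G \subset A$ with $A$ finitely generated as a $\C$-algebra in $\Sym(\Vec)$, the Artin–Tate lemma — transcribed to $\Sym(\Vec)$ using that $A$ is a noetherian algebra, hence every $A^G$-submodule of the finitely generated $A$-module $A$ is finitely generated — yields that $A^G$ is a finitely generated twisted commutative algebra. Being finitely generated, $A^G$ has a bounded number of rows in the partitions appearing in it (since $A$ does), and then noetherianity of $A^G$ follows by the same contraction-to-$\GL(\C^d)$ argument as in Theorem~\ref{tc:noeth}: a $\GL(\infty)$-stable ideal of $A^G$ is determined by its intersection with the finite-dimensional ring obtained by evaluating on $\C^d$, which is a finitely generated commutative $\C$-algebra, hence noetherian.

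The step I expect to be the main obstacle is verifying the Artin–Tate / Hilbert–Noether package inside $\Sym(\Vec)$ — in particular, that $A$ is module-finite over $A^G$ and that the "subring generated by the coefficients" argument of Artin–Tate goes through with twisted-commutative rings. Everything else (the Reynolds reduction, the contraction argument, bounded number of rows) is routine once one is comfortable with the $\GL$-model and with Theorem~\ref{tc:noeth}.
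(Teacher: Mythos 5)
There is a genuine gap, and it sits exactly where you predicted: the ``Hilbert--Noether package.'' The claim that $A$ is a finitely generated $A^G$-module is false for infinite reductive groups --- module-finiteness of $A$ over $A^G$ is a \emph{finite}-group phenomenon. Already for $G=\G_m$ acting on $A=\Sym(U\langle 1\rangle)$ through a nontrivial character on $U$ (or classically, $\G_m$ scaling $\C[x]$, where $A^G=\C$), $A$ is nowhere near module-finite over $A^G$, so the Artin--Tate step has nothing to stand on. Moreover, even if you could establish that $A^G$ is a finitely generated twisted commutative algebra (the analogue of Hilbert's finiteness theorem), that would not finish the proof: in this setting finite generation is not known to imply noetherianity (this is precisely Question~1 in \S\ref{s:ques}), and your contraction-to-$\C^d$ argument needs generation in order $1$ or at least bounded rows, which is not a hypothesis of the proposition --- it assumes only that $A$ is noetherian, not that its partitions have boundedly many rows. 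There is also a smaller slip in your Reynolds reduction for the module statement: averaging the images of $F$ under a chosen surjection $A\otimes F\to M$ need not produce generators of $M^G$, because in $y=\sum a_i x_i$ the $x_i$ are not invariant, so $\avg(y)\ne\sum\avg(a_i)\avg(x_i)$ (for $\G_m$ acting on $\C[x,y]$ with weights $\pm 1$ and $M=(x,y)$, the averages of $x,y$ are $0$ while $M^G=xy\,\C[xy]\ne 0$). The correct version chooses generators of the $A$-submodule $A\cdot M^G$ lying in $M^G$, which is where noetherianity of $A$ must enter.

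The paper's proof is much softer and bypasses all finiteness statements about $A^G$ as an algebra. Since noetherianity of $A^G$ means by definition that $A^G\otimes F$ is a noetherian $A^G$-module for every finite length $F$, one maps each $A^G$-submodule $M$ of $A^G\otimes F$ to the $A$-submodule $M'$ of $A\otimes F$ it generates (with $G$ acting trivially on $F$). The Reynolds operator, applied only to the $A$-coefficients of an invariant element $y=\sum a_i x_i$ with $x_i\in M$ already invariant, shows $M=(M')^G$, so this map of submodule lattices is injective and inclusion-preserving; since $A\otimes F$ is a noetherian $A$-module, ascending chains upstairs stabilize, hence so do chains of $A^G$-submodules. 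This works for any reductive $G$ and any noetherian $A$, with no appeal to finite generation of $A^G$, to module-finiteness, or to row bounds. I would rebuild your argument along these lines rather than trying to repair the Artin--Tate route.
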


\begin{proof}
The usual proof works.  To show how it carries over, we prove that $A^G$ is noetherian.  We must therefore show
that $A^G \otimes F$ is a noetherian $A^G$-module for any finite length object $F$ of $\Sym(\Vec)$.  We have
an inclusion $A^G \otimes F \subset A \otimes F$.  Let $G$ act on $A \otimes F$ by acting trivially on $F$.  Let
$M$ be an $A^G$-submodule of $A^G \otimes F$ and let $M'$ be the $A$-submodule of $A \otimes F$ it generates.  Let
$y$ be an element of $M'$ which is $G$-invariant.  We can then write $y=\sum a_i x_i$ where the $a_i$ are elements of
$A$ and the $x_i$ are elements of $M$.  As $y$ and the $x_i$ are invariant, we have $y=\avg(y)=\sum \avg(a_i) x_i$.
Thus $y$ belongs to $M$.  This shows that $M=(M')^G$.  Therefore, the map
\begin{displaymath}
\{ \textrm{$A^G$-submodules of $A^G \otimes F$} \} \to \{ \textrm{$A$-submodules of $A \otimes F$} \}
\end{displaymath}
which takes an $A^G$-submodule to the $A$-submodule it generates is inclusion preserving and injective.  As any
ascending chain on the right side stabilizes, so too does any ascending chain on the left side.  This proves that
$A^G$ is noetherian.  We leave the statement about modules to the reader.
\end{proof}

\subsection{The category $\Sym(\mc{S})$}
\label{ss:schursym}

We now investigate the category $\Sym(\mc{S})$, which will feature prominently in what follows.  We begin by giving a
useful description of it.  Let $\Vec^f$ denote the category of finite families of vector spaces.  An object of $\Vec^f$
is a pair $(V, L)$ where $L$ is a finite set and $V$ assigns to each element $x$ of $L$ a vector space $V_x$.  A
morphism $(V, L) \to (V', L')$ consists of a bijection $i:L \to L'$ and a linear map $V_x \to V'_{i(x)}$ for each
$x \in L$.  We now claim that the following three categories are equivalent:
\begin{enumerate}
\item The category $\Sym(\mc{S})$.
\item The category of nice functors $\Vec^f \to \Vec$.
\item The category of sequences $(F_n)$ where $F_n$ is a nice $S_n$-equivariant functor $\Vec^n \to \Vec$.
\end{enumerate}
The equivalences, as well as the definition of ``nice'' in (b), should be clear.  Each of these categories is an
abelian tensor category and the various equivalences respect this structure.  The tensor structure in (b) is given
by
\begin{displaymath}
(F \otimes G)(V, L)=\bigoplus_{L=A \amalg B} F(V \vert_A, A) \otimes G(V \vert_B, B).
\end{displaymath}
The tensor structure in (c) is like that for the sequence model of $\Sym(\Vec)$.  Note that since $\mc{S}$ is itself a
tensor category we have a point-wise tensor product $\boxtimes$ on $\Sym(\mc{S})$.  In (b) this product is given by
\begin{displaymath}
(F \boxtimes G)(V, L)=F(V, L) \otimes G(V, L),
\end{displaymath}
hence the name ``point-wise tensor product.''  This tensor product will not be used in the rest of this section
but will show up later.

As $\Sym(\mc{S})$ is an abelian tensor category, we have the notions of algebras and modules over algebras in
$\Sym(\mc{S})$.  We note that for $A \in \Sym(\mc{S})$ giving a multiplication map $A \otimes A \to A$ amounts to
giving a natural map
\begin{displaymath}
A(V, L) \otimes A(V', L') \to A(V \amalg V', L \amalg L'),
\end{displaymath}
where $V \amalg V'$ denotes the natural map $L' \amalg L \to \Vec$ built out of $V$ and $V'$.  Define an \emph{element}
of $F \in \Sym(\mc{S})$ to be an element of $F(V, L)$ for some $(V, L)$.  We call $\# L$ the \emph{order} of
the element.  (We will use the term ``degree'' later to reference the grading on $\mc{S}$.)  As in the twisted
commutative setting, one can then give an elemental definition for ``finitely generated'' and this agrees with the
more general definition in terms of finite length objects given in \S \ref{ss:symcat}.

We now give examples of some of the above definitions to give the reader some sense of their nature.  Let $F_{\lambda}$
be the object of $\Sym(\mc{S})$ which assigns to $(V, L)$ the space 0 if $\# L \ne 1$ and the space
$\bS_{\lambda}(V_x)$ if $L=\{x\}$.  Then $F_{\lambda} \otimes F_{\mu}$ assigns to $(V, L)$ the space 0 if $\# L \ne 2$
and the space
\begin{displaymath}
\left[ \bS_{\lambda}(V_x) \otimes \bS_{\mu}(V_y) \right] \oplus
\left[ \bS_{\lambda}(V_y) \otimes \bS_{\mu}(V_x) \right]
\end{displaymath}
if $L=\{x,y\}$.  As a second example, $\Sym(F_{\lambda})$ is the object of $\Sym(\mc{S})$ which assigns to $(V, L)$
the space $\bigotimes_{x \in L} \bS_{\lambda}(V_x)$.  Note in particular that the only partition appearing in the
symmetric algebra $\Sym(F_{\lambda})$ is $\lambda$ itself.  These examples underline the fact that the product in
$\Sym(\mc{S})$ is formal:  the Littlewood--Richardson rule does intervene in any way.  (The Littlewood--Richardson
rule is used in the point-wise product $\boxtimes$ for $\Sym(\mc{S})$.)

The following is the main result we need on algebras in $\Sym(\mc{S})$.

\begin{theorem}
\label{wnnoeth}
An algebra in $\Sym(\mc{S})$ which is finitely generated in order 1 is noetherian.
\end{theorem}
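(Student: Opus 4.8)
The plan is to deduce Theorem~\ref{wnnoeth} from Theorem~\ref{tc:noeth} by exhibiting a faithful, exactness-preserving functor that relates modules over an algebra in $\Sym(\mc{S})$ to modules over a twisted commutative algebra, so that noetherianity transfers. First I would reduce to the universal case: an algebra $B$ in $\Sym(\mc{S})$ finitely generated in order $1$ is a quotient of $\Sym(F)$ where $F$ is a finite direct sum of the objects $F_{\lambda}$ described above, say $F = \bigoplus_{i=1}^{r} F_{\lambda_i}$; since quotients of noetherian algebras are noetherian, it suffices to treat $B = \Sym(\bigoplus_i F_{\lambda_i})$. By the explicit description in \S\ref{ss:schursym}, this algebra assigns to $(V,L)$ the space $\bigotimes_{x \in L}\big(\bigoplus_i \bS_{\lambda_i}(V_x)\big)$, i.e. in the sequence/Schur picture it is the ``free'' object built from the order-$1$ generators with only the partitions $\lambda_i$ appearing.

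The key step is a flattening functor. Evaluate everything on a single auxiliary vector space: define $\Phi\colon \Sym(\mc{S}) \to \Sym(\Vec)$ by sending an object $F$, viewed as a sequence $(F_n)$ of $S_n$-equivariant functors $\Vec^n \to \Vec$, to the sequence whose $n$th term is $F_n(\C^{\infty},\ldots,\C^{\infty})$ with its residual $S_n$-action (equivalently, postcompose a nice functor $\Vec^f \to \Vec$ with the diagonal $\Vec \to \Vec^f$ on a fixed $\C^{\infty}$, landing in the $\GL$-model of $\Sym(\Vec)$). I would check: (i) $\Phi$ is exact and faithful on the subcategory of objects whose partitions have at most $d$ rows, because $\bS_{\lambda}(\C^{\infty}) \ne 0$ exactly when $\lambda$ has $\le d$ rows and these are the distinct irreducibles of $\GL(\infty)$ — this is precisely the mechanism of Weyl's lemma used in the proof of Theorem~\ref{tc:noeth}; (ii) $\Phi$ is monoidal, so $\Phi(B)$ is a twisted commutative algebra, and from the formula above $\Phi(\Sym(\bigoplus_i F_{\lambda_i}))$ has underlying sequence $n \mapsto \big(\bigoplus_i \bS_{\lambda_i}(\C^{\infty})\big)^{\otimes n}$, which is exactly $\Sym(U\langle 1\rangle)$ for $U = \bigoplus_i \bS_{\lambda_i}(\C^{\infty})$ (the $\GL$-model description $A = \Sym(U \otimes \C^{\infty})$ matches after regrouping). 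Hence $\Phi(B)$ is a twisted commutative algebra finitely generated in order $1$, so noetherian by Theorem~\ref{tc:noeth}; and (iii) $\Phi$ sends $B$-modules to $\Phi(B)$-modules, preserving submodule lattices because $\Phi$ is faithful exact, so an ascending chain of $B$-submodules of a finitely generated $B$-module $M$ maps to an ascending chain of $\Phi(B)$-submodules of the finitely generated $\Phi(B)$-module $\Phi(M)$, forcing it to stabilize.

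The main obstacle is faithfulness: $\Phi$ is \emph{not} faithful on all of $\Sym(\mc{S})$, since partitions with more than $d$ rows are killed, so I must pin down that every algebra finitely generated in order $1$, and every finitely generated module over it, involves only partitions with a bounded number of rows. For the algebra this is immediate from $\Sym(F_{\lambda})$ involving only $\lambda$ (noted in the excerpt) together with the fact that a finite generating set uses finitely many $\lambda$'s. For modules one needs the analogue: a finitely generated $B$-module $M$ is a quotient of $B \otimes G$ with $G$ finite length, and the partitions occurring in $B \otimes G$ are bounded because the tensor product in $\Sym(\mc{S})$ is ``formal'' — it concatenates orders and shuffles, without Littlewood–Richardson mixing, so no new partitions beyond those already in $B$ and $G$ are created. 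Once that boundedness is in hand, choose $d$ larger than all row-counts appearing, and the argument of the previous paragraph goes through verbatim. A final routine check is that $\Phi$ really is monoidal on the relevant subcategory (the induction/averaging in the product of $\Sym(\mc{S})$ corresponds under evaluation to the induction in the product of $\Sym(\Vec)$), and that the module statement, not just the algebra statement, transfers — but these are formal manipulations of the models already set up in \S\ref{ss:schursym}.
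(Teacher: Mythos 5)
Your proposal is correct and takes essentially the same route as the paper's proof: bound the rows of all partitions occurring in the algebra and in a finitely generated module, evaluate on a constant family built from a $d$-dimensional space to get a tensor functor to $\Sym(\Vec)$ that is exact and injective on the relevant subobjects, and transfer ascending chains to a twisted commutative algebra finitely generated in order $1$, where Theorem~\ref{tc:noeth} applies. The only repair needed is to replace the stray $\C^{\infty}$'s by $\C^{d}$ (as your own final paragraph in fact demands): with $U=\bigoplus_i \bS_{\lambda_i}(\C^{\infty})$ infinite-dimensional, $\Sym(U\langle 1\rangle)$ is not \emph{finitely} generated in order $1$ and Theorem~\ref{tc:noeth} would not apply, while with $\C^{d}$ both the faithfulness claim and the finite generation claim hold exactly as in the paper.
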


\begin{proof}
Let $A$ be an algebra finitely generated in order 1.  Let $F$ be a finite length object of $\Sym(\mc{S})$.
We must show that $A \otimes F$ is a noetherian $A$-module.  As with any finitely generated algebra in $\Sym(\mc{S})$,
the number of rows in any partition appearing in $A$ is bounded; the same holds for $A \otimes F$.  Let $d$ be
an integer such that any partition appearing in $A \otimes F$ has at most $d$ rows, and let $U$ be a vector space
of dimension $d$.  For a finite set $L$ let $U_L$ be the constant family on $U$, i.e., the object $(V, L)$ of
$\Vec^f$ with $V_x=U$ for all $x$.  Then $L \mapsto U_L$ defines a functor $i:\Cfs \to \Vec^f$, which respects the
monoidal structure (disjoint union) on each category.  The induced functor $i^*:\Sym(\mc{S}) \to \Sym(\Vec)$ is a tensor
functor.  One easily sees that if $M'$ and $M$ are two sub-objects of $A \otimes F$ then $M=M'$ if and only
if $i^* M=i^* M'$.  We have thus shown that the map
\begin{displaymath}
\{ \textrm{$A$-submodules of $A \otimes F$} \} \to \{ \textrm{$(i^*A)$-submodules of $i^*(A \otimes F)$} \}
\end{displaymath}
is injective; it is obviously inclusion preserving.  Now, $i^* A$ is a twisted commutative algebra finitely generated
in order 1; to see this, write $A$ as a quotient of $\Sym(S)$ with $S \in \mc{S}$, so that $i^* A$ is a quotient of
$\Sym(S(U)\langle 1 \rangle)$.  We thus find that $i^* A$ is noetherian by Theorem~\ref{tc:noeth}.  As
$i^*(A \otimes F)=(i^* A) \otimes (i^* F)$ is a finitely generated $(i^* A)$-module, it is noetherian.  We thus find
that every ascending chain of $A$-submodules of $A \otimes F$ stabilizes, and so $A$ is noetherian.
\end{proof}

The following proposition is proved just like Proposition~\ref{tc:inv}.

\begin{proposition}
\label{wninv}
Let $A$ be a noetherian algebra in $\Sym(\mc{S})$ on which a reductive group $G$ acts.  Then $A^G$ is again
noetherian.  If $M$ is a finitely generated $A$-module with a compatible action of $G$ then $M^G$ is a finitely
generated $A^G$-module.
\end{proposition}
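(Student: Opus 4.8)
The plan is to imitate the proof of Proposition~\ref{tc:inv}, only now working in the category $\Sym(\mc{S})$ instead of $\Sym(\Vec)$. The two ingredients that make the earlier argument go through are: (i) $\Sym(\mc{S})$ is a semisimple abelian tensor category in which $G$-actions make sense and over which one can average; and (ii) for a reductive $G$ acting on an object $X$ of $\Sym(\mc{S})$, the invariants $X^G$ are a well-defined sub-object, and the averaging operator $\avg: X \to X^G$ is $A^G$-linear whenever $A$ acts on $X$ compatibly. Both of these hold here for exactly the same reasons as in the twisted commutative case: $\mc{S}$ is semisimple $\C$-linear and $G$ acts by natural automorphisms, so decomposing into isotypic pieces commutes with everything in sight, and averaging over a maximal compact (or using the Reynolds operator) produces the projection onto the trivial isotypic component.

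First I would reduce, exactly as before, to showing that $A^G \otimes F$ is a noetherian $A^G$-module for every finite length object $F$ of $\Sym(\mc{S})$. Give $F$ the trivial $G$-action, so that $A \otimes F$ carries a $G$-action, and note the inclusion $A^G \otimes F \subset A \otimes F$. Given an $A^G$-submodule $M \subset A^G \otimes F$, let $M'$ be the $A$-submodule of $A \otimes F$ that it generates. The key claim is $M = (M')^G$. The inclusion $M \subset (M')^G$ is clear since $M$ already consists of $G$-invariant elements. For the reverse, take a $G$-invariant element $y \in M'$; by construction $y = \sum a_i x_i$ with $a_i \in A$ and $x_i \in M$, and since $y$ and each $x_i$ is $G$-invariant, applying $\avg$ gives $y = \avg(y) = \sum \avg(a_i) x_i$, where each $\avg(a_i) \in A^G$; hence $y \in M$. (As in the earlier proof, this really is an identity of elements in the elemental sense of \S\ref{ss:schursym}, so it suffices to verify it on elements of $(A \otimes F)(V,L)$, where it is the ordinary linear-algebra computation.) Therefore the map $M \mapsto M'$ from $A^G$-submodules of $A^G \otimes F$ to $A$-submodules of $A \otimes F$ is injective and inclusion-preserving. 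Since $A$ is noetherian and $A \otimes F = A \otimes (i^{-1})^*$... more simply, since $A$ is noetherian and $F$ is finite length, $A \otimes F$ is a noetherian $A$-module, so any ascending chain of $A$-submodules stabilizes; pulling back along the injection, any ascending chain of $A^G$-submodules of $A^G \otimes F$ stabilizes. This proves $A^G$ is noetherian. The statement about modules follows by the same device: given finitely generated $A$-module $M$ with compatible $G$-action, write $M$ as a quotient of $A \otimes F$, take invariants (exact since $G$ is reductive and the category semisimple), and run the identical argument to see $M^G$ is a noetherian—hence finitely generated—$A^G$-module.

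The only point requiring genuine care, and the place where one might worry the argument does not literally transcribe, is the existence and behavior of the averaging operator in $\Sym(\mc{S})$: one must know that $X \mapsto X^G$ is exact, that $\avg$ is natural, and that it is $A^G$-linear. All of this is formal from semisimplicity of the representation category of the reductive $G$ together with the fact that every object of $\Sym(\mc{S})$ is a (possibly infinite) direct sum of simples and $G$ acts through automorphisms preserving this decomposition up to finite-dimensional blocks; so the $A^G$-linearity reduces, on each element $x \in (A \otimes F)(V,L)$, to the statement that averaging a product $a \cdot x$ with $x$ already invariant equals $\avg(a) \cdot x$, which is immediate. I do not expect any real obstacle; the proposition is, as the paper says, proved just like Proposition~\ref{tc:inv}, and the content is entirely in having set up $\Sym(\mc{S})$, its tensor structure, and the elemental notion of submodule, all of which is already in place.
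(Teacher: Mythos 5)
Your proposal is correct and matches the paper exactly: the paper's entire proof of Proposition~\ref{wninv} is the remark that it is proved just like Proposition~\ref{tc:inv}, and your argument is a faithful transcription of that earlier proof (reduction to $A^G \otimes F$, trivial $G$-action on $F$, the injective inclusion-preserving map $M \mapsto M'$ via the identity $M = (M')^G$ obtained by averaging). The closing discussion of the Reynolds operator in $\Sym(\mc{S})$ is a reasonable sanity check but adds nothing the paper felt it needed to say.
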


\subsection{$\Delta$-modules}

Let $\Vec^{\Delta}$ be the category whose objects are pairs $(V, L)$ as in $\Vec^f$, but where now a morphism $(V, L)
\to (V', L')$ consists of a surjection $L' \to L$ together with a map $V_x \to \bigotimes_{y \mapsto x} V'_y$ for each
$x \in L$.  We now come to a central concept in this paper:

\begin{definition}
A \emph{$\Delta$-module} is a nice functor $\Vec^{\Delta} \to \Vec$.
\end{definition}

In the above definition, we say that a functor $\Vec^{\Delta} \to \Vec$ is nice if it is so when
restricted to $\Vec^f$.  We denote the category of these functors by $\Mod_{\Delta}$.  It is abelian, though not
semi-simple.  Since $\Vec^f$ is a sub-category of $\Vec^{\Delta}$, every
$\Delta$-module defines an object of $\Sym(\mc{S})$.  For a $\Delta$-module $F$ we let $[F]$ be the class in
$\Sym(K(\mc{S}))$ obtained by regarding $F$ as an object of $\Sym(\mc{S})$.

The forgetful functor $\Mod_{\Delta} \to \Sym(\mc{S})$ has a left adjoint, which we denote by $\Phi$.  To
describe this functor explicitly, we must introduce a piece of notation.  Let $(V, L)$ be an object of $\Vec^f$
and let $\ms{U}$ be a partition of $L$.  For a set $S \in \ms{U}$ put $V_S=\bigotimes_{x \in S} V_x$.  Then
$(V, \ms{U})$ is an object of $\Vec^f$.  There is a natural map $(V, \ms{U}) \to (V, L)$ in $\Vec^{\Delta}$, the
surjection $L \to \ms{U}$ mapping an element of $L$ to the part of $\ms{U}$ to which it belongs.  With this notation
in hand, we can give the following formula for $\Phi$:
\begin{displaymath}
(\Phi F)(V, L)=\bigoplus F(V, \ms{U}),
\end{displaymath}
where the sum is over all partitions $\ms{U}$ of $L$.  We leave it to the reader to work out the $\Delta$-structure on
$\Phi(F)$ and verify its universal property.  We call $\Phi(F)$ the \emph{free $\Delta$-module} on $F$.  By a
\emph{(finite) free} $\Delta$-module we mean one isomorphic to $\Phi(F)$, where $F$ is a (finite length) object in
$\Sym(\mc{S})$.  Free modules are projective since $\Phi$ is a left adjoint and $\Sym(\mc{S})$ is semi-simple.

We define an \emph{element} of a $\Delta$-module $F$ to be an element
of $F(V, L)$ for some $(V, L)$.  Given a collection of elements of $F$ one can speak of the $\Delta$-submodule that
it generates.  We say that $F$ is \emph{finitely generated} if there is a finite set of elements of $F$ that generate
it.  This is equivalent to $F$ being a quotient of a finite free $\Delta$-module.  We say that a $\Delta$-module
is \emph{noetherian} if every ascending chain of $\Delta$-submodules stabilizes.  Note that noetherian implies
finitely generated.

Let $n$ be a positive integer.  Let $T_n$ be the object of $\Sym(\mc{S})$ which assigns to $(V, L)$ the space
$V_x^{\otimes n}$ if $L=\{x\}$ and 0 otherwise.  Let $W_n$ be the symmetric algebra on $T_n$ in the category
$\Sym(\mc{S})$.  It is given by
\begin{displaymath}
W_n(V, L)=\bigotimes_{x \in L} V_x^{\otimes n}
\end{displaymath}
The multiplication map in $W_n$ is given by concatenation of tensors.  Note that $S_n$ acts on $W_n$ by algebra
homomorphisms.  We also consider the free $\Delta$-module on $T_n$.  It is given by
\begin{displaymath}
(\Phi T_n)(V, L)=\bigotimes_{x \in L} V_x^{\otimes n}
\end{displaymath}
if $\# L \ge 1$, while $(\Phi T_n)(V, L)=0$ for $\#L=0$.  Observe that $\Phi(T_n)$ is naturally a
subobject of $W_n$ in the category $\Sym(\mc{S})$, and is in fact an ideal.  The following is the key result connecting
$\Delta$-modules to objects we have previously studied, the final rung of the ladder of \S \ref{ss:outline}.

\begin{proposition}
\label{delta-w}
Any $\Delta$-submodule of $\Phi(T_n)$ is a $W_n^{S_n}$-submodule of $\Phi(T_n)$.
\end{proposition}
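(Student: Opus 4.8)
The plan is to show that the $\Delta$-module structure on $\Phi(T_n)$, restricted to the multiplication action, extends to an action of the algebra $W_n^{S_n} \subset W_n$, and that any $\Delta$-submodule is automatically stable under this extended action. The key point is to identify what extra operations a $\Delta$-module structure gives us beyond the $\Sym(\mc{S})$-structure. Recall that morphisms in $\Vec^{\Delta}$ are built from two types: the morphisms already present in $\Vec^f$ (bijections of index sets plus linear maps on the vector spaces), and the "merging" morphisms $(V, \ms{U}) \to (V, L)$ associated to a partition $\ms{U}$ of $L$, which on the parts $S \in \ms{U}$ use the identity map $V_S = \bigotimes_{x \in S} V_x \to V_S$. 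The $\Sym(\mc{S})$-structure already records functoriality in the first type; the $\Delta$-structure adds functoriality in the merging morphisms. So the extra structure on a $\Delta$-module is exactly: for each way of merging indices, a natural "multiplication-like" map.

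First I would make explicit what the $W_n^{S_n}$-action on $\Phi(T_n)$ is. The algebra $W_n(V, L) = \bigotimes_{x \in L} V_x^{\otimes n}$ acts on itself by concatenation of tensors, and $\Phi(T_n) \subset W_n$ is the ideal of those pure tensors in which at least one index of $L$ appears, i.e.\ $\Phi(T_n)(V,L)$ is $W_n(V,L)$ when $\#L \geq 1$ and $0$ when $L = \emptyset$. The invariant subalgebra $W_n^{S_n}$ is the part of $W_n$ fixed by the diagonal $S_n$-action permuting the $n$ tensor factors at each index simultaneously. To multiply an element $w \in W_n^{S_n}(V', L')$ with an element $m \in \Phi(T_n)(V, L)$, one first forms the tensor $w \otimes m$ living in $W_n(V \amalg V', L \amalg L')$, then one must collapse this back into $\Phi(T_n)(V, L)$ — but this requires $L' $ to be matched up with $L$. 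The cleanest formulation: the $W_n^{S_n}$-module structure on $\Phi(T_n)$ is the one where $W_n^{S_n}(V,L)$ acts on $\Phi(T_n)(V,L)$ for the \emph{same} $(V,L)$ by tensor-factor-wise concatenation at each $x \in L$, and this is well-defined on $\Phi(T_n)$ precisely because the $S_n$-invariance of the multiplier makes the result independent of how one interleaves the $n$ new factors with the $n$ old ones at each index.

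Next I would show that a $\Delta$-submodule $M \subseteq \Phi(T_n)$ is stable under this action. Here is where the merging morphisms do the work. Given $m \in M(V, L)$ and a multiplier, I want to realize "multiply by the multiplier" as the image of $m$ under a morphism in $\Vec^{\Delta}$, or more precisely as a composite of: (i) pulling $m$ back along $\Delta$-morphisms to some larger index set, and (ii) applying a merge. Concretely, to multiply the $x$-component by $v \in V_x^{\otimes n}$, I enlarge $L$ to $L \amalg \{x'\}$ with $V_{x'} = V_x$, use the merge morphism $(V, L\amalg\{x'\}) \to (V, L)$ that fuses $x$ and $x'$ into a single index with space $V_x \otimes V_x$ — wait, this isn't quite it, since merging changes the vector space. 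The correct move is to use a map in $\Vec^{\Delta}$ of the form: the index set map is a \emph{bijection}, but the linear map $V_x \to V_x \otimes V_x$ is a comultiplication-type map, composed with the $\Delta$-functoriality; combined with a merge this produces concatenation. I would check carefully that the diagonal $S_n$-symmetry built into $\Phi(T_n)$ (inherited from $W_n$) means that the required source data exists and the composite lands back in $M$ because $M$ is closed under \emph{all} $\Vec^{\Delta}$-morphisms. Since the $W_n^{S_n}$-action is generated by such operations together with the $\Sym(\mc{S})$-operations that $M$ already respects, we conclude $M$ is a $W_n^{S_n}$-submodule.

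The main obstacle will be step three: carefully disentangling the merging morphisms and the linear maps in $\Vec^{\Delta}$ to exhibit the concatenation-multiplication by an $S_n$-invariant element as a genuine $\Delta$-operation, keeping track of the $S_n$-equivariance bookkeeping so that the non-invariant part of $W_n$ is correctly \emph{not} obtained this way (otherwise the statement would be false — $\Phi(T_n)$ is not a $W_n$-submodule of itself in the $\Delta$-category, only a $W_n^{S_n}$-submodule). I expect this to come down to a direct but slightly fiddly diagram chase in $\Vec^{\Delta}$, checking naturality on pure tensors; everything else is formal.
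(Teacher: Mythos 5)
There is a genuine gap, and it starts with your identification of the module structure. A module over an algebra $A$ in $\Sym(\mc{S})$ is given by maps $A(V',L') \otimes M(V,L) \to M(V \amalg V', L \amalg L')$, so the action of $W_n^{S_n}$ on $\Phi(T_n)$ that must be checked is \emph{external} concatenation: a multiplier lives over a family $(V',L')$ disjoint from $(V,L)$, and the product lives over $(V \amalg V', L \amalg L')$. Your ``cleanest formulation'' --- an action of $W_n^{S_n}(V,L)$ on $\Phi(T_n)(V,L)$ at the \emph{same} $(V,L)$ by concatenation at each index --- is not this structure, and it does not even preserve $\Phi(T_n)$: concatenating $V_x^{\otimes n}$ with $V_x^{\otimes n}$ at a single index produces $V_x^{\otimes 2n}$, which is not a component of $\Phi(T_n)$. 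Consequently the operation you then try to realize as a $\Delta$-operation (multiplying the $x$-component by an element of $V_x^{\otimes n}$ via a ``comultiplication-type map $V_x \to V_x \otimes V_x$'') is not the one needed, and in any case no such natural comultiplication exists in $\Vec$; you notice the difficulty yourself but defer it to a ``fiddly diagram chase,'' which is exactly where the content of the proposition lies.

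The missing ideas are the two steps the paper uses. First, writing $W=\bigotimes_{y\in L'}V'_y$ so that $W_n(V',L')=W^{\otimes n}$, one observes that $(W^{\otimes n})^{S_n}$ is spanned by pure powers $w^{\otimes n}$ with $w \in W$; this reduces the problem to multipliers of that special form, and is precisely where $S_n$-invariance enters (and why all of $W_n$ does not act through $\Delta$-operations). Second, for such a multiplier one exhibits a \emph{single} morphism $f:(V,L)\to(V\amalg V', L\amalg L')$ in $\Vec^{\Delta}$: the surjection $L\amalg L' \to L$ is the identity on $L$ and collapses all of $L'$ to a chosen $x_0\in L$, the maps at $x\ne x_0$ are identities, and the map at $x_0$ is $V_{x_0}\to V_{x_0}\otimes W$, $u\mapsto u\otimes w$. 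Applying $\Phi(T_n)$ to $f$ tensors each of the $n$ slots at $x_0$ with $w$, i.e.\ it is exactly multiplication by $w^{\otimes n}$; hence the product of $v\in M(V,L)$ with any invariant multiplier lies in the $\Delta$-submodule generated by $v$. Without the reduction to $w^{\otimes n}$ and this collapse-onto-$x_0$ construction, your outline cannot be completed as written.
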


\begin{proof}
Let $(V, L)$ and $(V', L')$ be two objects of $\Vec^f$.  Let $W=\bigotimes_{y \in L'} V'_y$, so that
$W_n(V', L')=W^{\otimes n}$.  Let $v \in (\Phi T_n)(V, L)$, and let $v' \in W^{\otimes n}$ be $S_n$-invariant.  We must
show that the image of $v \otimes v'$ under the multiplication map $(\Phi T_n)(V, L) \otimes W^{\otimes n} \to
(\Phi T_n)(V \amalg V', L \amalg L')$ belongs to the $\Delta$-submodule of $\Phi T_n$ generated by $v$.  Now,
$(W^{\otimes n})^{S_n}$ is spanned by $n$th tensor powers of elements of $W$.  It thus suffices to treat the case
$v'=w^{\otimes n}$ for some $w \in W$.

Pick an element $x_0$ of $L$.  Define a map $f:(V, L) \to (V \amalg V', L \amalg L')$ in $\Vec^{\Delta}$, as follows.
The map $L \amalg L' \to L$ is the identity on $L$ and collapses all of $L'$ to $x_0$.  For $x \ne x_0$, the map
$f_x:V_x \to V_x$ is the identity.  The map $f_{x_0}:V_{x_0} \to V_{x_0} \otimes W$ is given by $\id \otimes w$.
The map $f$ induces a map $(\Phi T_n)(V, L) \to (\Phi T_n)(V \amalg V', L \amalg L')$, which one easily verifies
is the map induced by multiplication by $w^{\otimes n}$ on $W_n$.  Thus if $v$ is an element of $(\Phi T_n)(V, L)$, then
its product with $w^{\otimes n}$ in $W_n$ can be computed by taking its image under $(\Phi T_n)(f)$.
This shows that the product of $v$ and $v'$ belongs to the $\Delta$-module generated by $v$, which completes
the proof.
\end{proof}

\begin{theorem}
\label{tnnoeth}
The $\Delta$-module $\Phi(T_n)$ is noetherian.
\end{theorem}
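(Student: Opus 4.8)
The plan is to deduce the noetherianity of the $\Delta$-module $\Phi(T_n)$ from the noetherianity of the algebra $W_n^{S_n}$ in $\Sym(\mc{S})$, using Proposition~\ref{delta-w} as the bridge. First I would observe that $W_n$ is the symmetric algebra on $T_n$, which is an object of $\Sym(\mc{S})$ supported in order $1$; hence $W_n$ is finitely generated in order $1$, and Theorem~\ref{wnnoeth} applies to show $W_n$ is noetherian. Next, $S_n$ acts on $W_n$ by algebra homomorphisms (as noted just before Proposition~\ref{delta-w}), and $S_n$ is a (finite, hence) reductive group; so Proposition~\ref{wninv} shows that $W_n^{S_n}$ is a noetherian algebra in $\Sym(\mc{S})$.

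With these two facts in hand, the argument concludes quickly. By Proposition~\ref{delta-w}, every $\Delta$-submodule of $\Phi(T_n)$ is in particular a $W_n^{S_n}$-submodule of $\Phi(T_n)$, where $\Phi(T_n)$ is regarded as an object of $\Sym(\mc{S})$ via the forgetful functor. Thus an ascending chain of $\Delta$-submodules of $\Phi(T_n)$ is, a fortiori, an ascending chain of $W_n^{S_n}$-submodules of $\Phi(T_n)$. To invoke noetherianity of $W_n^{S_n}$ I need to know that $\Phi(T_n)$ is a \emph{finitely generated} $W_n^{S_n}$-module. Here I would use that $\Phi(T_n)$ sits inside $W_n$ as an ideal (as noted before Proposition~\ref{delta-w}): concretely, $(\Phi T_n)(V,L)$ equals $\bigotimes_{x\in L} V_x^{\otimes n}$ for $\#L\ge 1$ and is $0$ for $\#L=0$, so $\Phi(T_n)$ is generated over $W_n$ by the order-$1$ piece $T_n$, which is a finite length object of $\Sym(\mc{S})$. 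Averaging over $S_n$ (exactly as in the proof of Proposition~\ref{tc:inv}/\ref{wninv}) then shows $\Phi(T_n)$ is finitely generated as a $W_n^{S_n}$-module; alternatively one checks directly that $\Phi(T_n)$ is generated over $W_n^{S_n}$ by the image of $T_n$, since any monomial can be symmetrized without leaving the given graded piece. Since $W_n^{S_n}$ is noetherian, every finitely generated $W_n^{S_n}$-module is noetherian, so the chain of $\Delta$-submodules stabilizes.

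The main obstacle I anticipate is the verification that $\Phi(T_n)$ is finitely generated as a module over the invariant ring $W_n^{S_n}$ rather than over $W_n$ itself. Passing to invariants does not in general preserve the property of being cyclic or having a prescribed set of generators, so one must argue carefully; the averaging trick works because the $S_n$-action on $W_n$ fixes each graded piece $W_n^{(d)}$ (it only permutes tensor factors \emph{within} each $V_x^{\otimes n}$), so symmetrizing a generator keeps it in the same component of $\Sym(\mc{S})$. Once that point is settled, the rest is a formal chain of implications using Theorem~\ref{wnnoeth}, Proposition~\ref{wninv}, and Proposition~\ref{delta-w}.
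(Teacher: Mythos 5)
Your overall route coincides with the paper's: $W_n$ is noetherian by Theorem~\ref{wnnoeth}, $W_n^{S_n}$ is noetherian by Proposition~\ref{wninv}, and Proposition~\ref{delta-w} turns an ascending chain of $\Delta$-submodules of $\Phi(T_n)$ into an ascending chain of $W_n^{S_n}$-submodules. The genuine gap is the step where you assert that $\Phi(T_n)$ is a finitely generated $W_n^{S_n}$-module, because neither justification you offer is valid. The averaging argument of Propositions~\ref{tc:inv} and~\ref{wninv} shows that $M^{\Gamma}$ is finitely generated over $A^{\Gamma}$ when $M$ is finitely generated over $A$; it does not show that $M$ itself is finitely generated over $A^{\Gamma}$, and in general this fails unless one already knows $A$ is a finite $A^{\Gamma}$-module. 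Your ``direct check'' that the image of $T_n$ generates $\Phi(T_n)$ over $W_n^{S_n}$ is in fact false: take $n=2$ and $L=\{1,2\}$. Since the $S_2$-action is the simultaneous swap of tensor slots, $W_2^{S_2}(V,\{y\})=\Sym^2 V_y$, so the order-two part of the $W_2^{S_2}$-submodule generated by $T_2$ is
\[
\Sym^2 V_1\otimes V_2^{\otimes 2}\;+\;V_1^{\otimes 2}\otimes \Sym^2 V_2,
\]
which misses the summand $\lw{2}{V_1}\otimes\lw{2}{V_2}$ of $(\Phi T_2)(V,\{1,2\})=V_1^{\otimes 2}\otimes V_2^{\otimes 2}$. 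So $T_n$ alone does not generate, and symmetrizing elements does not help: the issue is that only invariant coefficients are available, not which graded component a symmetrized element lands in.

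The missing input is a Noether-type finiteness statement, and this is exactly how the paper closes the argument: $W_n$ is a \emph{finite} $W_n^{S_n}$-module (the analogue, for a finite group acting on an algebra in $\Sym(\mc{S})$ finitely generated in order $1$, of Noether's classical theorem that an algebra is module-finite over the invariants of a finite group). Granting that, $W_n$ is a noetherian $W_n^{S_n}$-module because $W_n^{S_n}$ is a noetherian algebra, and $\Phi(T_n)$, being a $W_n^{S_n}$-submodule of $W_n$ (it is an ideal), is then itself a noetherian $W_n^{S_n}$-module --- in particular finitely generated, though you never need to exhibit generators. With that substitution, the remainder of your argument (Proposition~\ref{delta-w} plus stabilization of chains of $W_n^{S_n}$-submodules) goes through verbatim. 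So the repair is to replace your finite-generation claim and its two faulty justifications by the module-finiteness of $W_n$ over $W_n^{S_n}$, which requires an integrality/transfer style argument of its own rather than the averaging trick.
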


\begin{proof}
The algebra $W_n$ is noetherian by Theorem~\ref{wnnoeth}, and so $W_n^{S_n}$ is noetherian by
Proposition~\ref{wninv}.  As $W_n$ is a finite $W_n^{S_n}$-module, it is a noetherian $W_n^{S_n}$-module.  The
same holds for the submodule $\Phi(T_n)$.  If $M_i$ is an ascending chain of $\Delta$-submodules of $\Phi(T_n)$
then it is an ascending chain of $W_n^{S_n}$-submodules by the previous proposition, and therefore stabilizes.
Thus $\Phi(T_n)$ is a noetherian $\Delta$-module.
\end{proof}

Call a $\Delta$-module \emph{small} if it is a subquotient of a finite direct sum of $\Phi(T_n)$'s (with
$n$ allowed to vary).  The above theorem implies that small $\Delta$-modules are noetherian, and in particular
finitely generated.  We record the following result, which follows immediately from the definitions and
Proposition~\ref{delta-w}.

\begin{proposition}
\label{delta-filt}
Let $F$ be a small $\Delta$-module.  Then there exists a finite chain $0=F_0 \subset \cdots
\subset F_r=F$ of $\Delta$-submodules of $F$ and integers $n_i$ such that $F_i/F_{i-1}$, regarded as an object
of $\Sym(\mc{S})$, can be given the structure of a finitely generated module over $W_{n_i}^{S_{n_i}}$.
\end{proposition}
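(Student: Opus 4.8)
The plan is to peel off the statement from the definition of ``small'' together with Proposition~\ref{delta-w}. By definition, a small $\Delta$-module $F$ is a subquotient of a finite direct sum $P = \bigoplus_{j=1}^{s} \Phi(T_{n_j})$; that is, there are $\Delta$-submodules $Q' \subset Q \subset P$ with $F \cong Q/Q'$. First I would reduce to the case $F = Q$ is an actual $\Delta$-submodule of $P$: a filtration of $Q$ by $\Delta$-submodules, each of whose graded pieces is a finitely generated $W_{n_i}^{S_{n_i}}$-module in $\Sym(\mc{S})$, induces (by intersecting with $Q'$ and passing to quotients, and using that $\Sym(\mc{S})$ is semisimple so submodules of finitely generated modules over a noetherian algebra are again finitely generated) such a filtration of $Q/Q' = F$. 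So it is enough to build the filtration for $\Delta$-submodules of $P = \bigoplus_{j=1}^{s} \Phi(T_{n_j})$.

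Next I would induct on $s$. For $s=1$ we have $F \subset \Phi(T_n)$ for a single $n$; by Proposition~\ref{delta-w}, $F$ is a $W_n^{S_n}$-submodule of $\Phi(T_n)$. Since $\Phi(T_n) \subset W_n$ is a finitely generated $W_n^{S_n}$-module (it is an ideal in $W_n$, and $W_n$ is module-finite over $W_n^{S_n}$ because $G = S_n$ is finite, and $W_n^{S_n}$ is noetherian by Theorem~\ref{wnnoeth} and Proposition~\ref{wninv}) and this algebra is noetherian, the submodule $F$ is itself a finitely generated $W_n^{S_n}$-module. So the one-term chain $0 = F_0 \subset F_1 = F$ with $n_1 = n$ works. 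For the inductive step, write $P = \Phi(T_{n_1}) \oplus P'$ with $P' = \bigoplus_{j=2}^{s}\Phi(T_{n_j})$, let $\pi: P \to P'$ be the projection, and set $F' = F \cap \Phi(T_{n_1})$ and $F'' = \pi(F) \subset P'$. Both $F'$ and $F''$ are $\Delta$-submodules, $F' $ is covered by the $s=1$ case, and $F''$ is covered by the inductive hypothesis (a $\Delta$-submodule of a direct sum of $s-1$ of the $\Phi(T_{n_j})$'s); splicing the resulting chains around the short exact sequence $0 \to F' \to F \to F'' \to 0$ produces the desired chain for $F$, provided that for each graded piece ``finitely generated over $W_{n_i}^{S_{n_i}}$ as an object of $\Sym(\mc{S})$'' is stable under extensions inside $\Sym(\mc{S})$ — which it is, since $W_{n_i}^{S_{n_i}}$ is noetherian.

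I do not expect a serious obstacle here: the statement is, as the excerpt says, essentially immediate from the definitions plus Proposition~\ref{delta-w}. The only point requiring a little care is the bookkeeping in the subquotient reduction and in the extension step — one must remember that the algebras $W_{n_i}^{S_{n_i}}$ acting on consecutive graded pieces may differ, so the filtration genuinely needs the ``integers $n_i$'' to vary with $i$, and one must check that forming kernels/images/quotients of $\Delta$-submodules is compatible with the $W_n^{S_n}$-module structure coming from Proposition~\ref{delta-w}. The latter is automatic because the $\Delta$-module maps in sight (inclusions and the projection $\pi$) are in particular maps in $\Sym(\mc{S})$, hence $W_n^{S_n}$-linear once one restricts the module structure along the relevant inclusion, and noetherianity of each $W_{n_i}^{S_{n_i}}$ guarantees that finite generation of modules in $\Sym(\mc{S})$ is inherited by submodules and quotients.
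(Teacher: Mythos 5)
Your argument is correct and is essentially the one the paper has in mind: the paper prints no proof, asserting the result is immediate from the definition of small and Proposition~\ref{delta-w}, and your induction on the number of summands (equivalently, filtering $\bigoplus_j \Phi(T_{n_j})$ by partial sums and intersecting with $F$), with the $s=1$ case handled by Proposition~\ref{delta-w} together with noetherianity of $W_n^{S_n}$ and module-finiteness of $W_n$ over it, is exactly that argument written out. One caveat: your proviso that ``finitely generated over $W_{n_i}^{S_{n_i}}$'' be stable under extensions in $\Sym(\mc{S})$, justified by noetherianity, is both unnecessary and incorrect as stated --- an extension of a module over one algebra $W_{n}^{S_{n}}$ by a module over another carries no natural module structure at all, and noetherianity has nothing to do with extensions; the standard splicing (concatenate the chain for $F'$ with the preimage under $F \to F''$ of the chain for $F''$) reproduces the graded pieces of $F'$ and $F''$ up to isomorphism in $\Sym(\mc{S})$, which is all the statement asks, so no extension-closure is ever needed. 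Likewise, in the reduction from subquotients to submodules, the relevant point is not semisimplicity of $\Sym(\mc{S})$ but Proposition~\ref{delta-w} once more: the image of $Q'$ in each graded piece $Q_i/Q_{i-1} \subset \Phi(T_{n_i})$ is a $\Delta$-submodule, hence a $W_{n_i}^{S_{n_i}}$-submodule, so the corresponding graded piece of $Q/Q'$ is a quotient of a finitely generated $W_{n_i}^{S_{n_i}}$-module and remains finitely generated.
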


\begin{remark}
\label{delta-rem}
We can in fact show that all finitely generated $\Delta$-modules are noetherian.  The argument in the general case
is by a Gr\"obner degeneration, and is much different than our argument for $\Phi(T_n)$ presented above.  However, the
above argument for $\Phi(T_n)$, which relates $\Delta$-submodules to modules over $W_n^{S_n}$, is important for our
later arguments with Hilbert series.
\end{remark}

\subsection{More on $\Delta$-modules}
\label{ss:moredelta}

Let $F$ be a $\Delta$-module.  We define $F^{\rm{old}}(V, L)$ to be the space spanned by the images of the maps
$F(V, \ms{U}) \to F(V, L)$, as $\ms{U}$ varies over all non-discrete partitions of $L$.  One easily verifies that
$F^{\rm{old}}$ is a $\Delta$-submodule of $F$.  We define a functor
\begin{displaymath}
\Psi:\Mod_{\Delta} \to \Sym(\mc{S}), \qquad \Psi(F)=F/F^{\rm{old}}.
\end{displaymath}
Note that $\Psi(F)$ is naturally a $\Delta$-module.  However, if $(V, L) \to (V', L')$ is a map in $\Vec^{\Delta}$ and
$L' \to L$ is not an isomorphism, then $\Psi(F)$ applied to this map is zero; this is why we regard $\Psi(F)$ as an
object of $\Sym(\mc{S})$.  In fact, $\Psi(F)$ is the universal quotient of $F$ with this property.  One may thus
regard $\Psi(F)$ as the maximal semi-simple quotient (i.e., cosocle) of $F$.

A $\Delta$-module $M$ is finitely generated if and only if $\Psi(M)$ is a finite length object of $\Sym(\mc{S})$; this
is a version of Nakayama's lemma.  In fact, $M$ is a quotient of $\Phi(\Psi(M))$, though non-canonically.  We have
$\Psi(\Phi(F))=F$.  The functor $\Psi$ is right exact, but not exact.  Its left derived functors $L^i \Psi$
exist.  If $M$ is a finitely generated $\Delta$-module then $L^i \Psi(M)$ is a finite length object of $\Sym(\mc{S})$;
this can be deduced easily from the fact that finitely generated $\Delta$-modules are noetherian.  One can recover
$[M]$ from $[L \Psi M]$ by applying $\Phi$.  Thus the sequence of polynomials $[L^i \Psi M]$ contains
more information than the series $[M]$.

We now give an alternative definition of $\Delta$-modules.  Recall that for an object $(V, L)$ of $\Vec^f$ and
a partition $\ms{U}$ of $L$, there is a natural map $(V, \ms{U}) \to (V, L)$ in $\Vec^{\Delta}$.  One easily
verifies that every map in $\Vec^{\Delta}$ can be factored as one of these maps followed by a map in $\Vec^f$.
In fact, we can even say a bit more.  Call a partition \emph{little} if all its parts are
singletons, except one which has two elements.  Call a map $(V, \ms{U}) \to (V, L)$ \emph{little} if $\ms{U}$ is.
One then verifies that any map $(V, \ms{U}) \to (V, L)$ can be factored into a sequence of little maps.  Thus
all morphisms in $\Vec^{\Delta}$ can be factored into little maps and maps in $\Vec^f$.  Therefore, a
$\Delta$-module can be thought of as an object of $\Sym(\mc{S})$ together with the extra data of functoriality
with respect to little maps.

This extra data can be recorded in an elegant manner.  Let $m^*:\mc{S} \to \mc{S}^{\otimes 2}$ be the
co-multiplication map.  It takes $F \in \mc{S}$ to the functor $m^* F \in \mc{S}^{\otimes 2}$ given by $(V, W) \mapsto
F(V \otimes W)$.  The functor $m^* F$ has a natural $S_2$-equivariant structure and so defines an object of
$\Sym^2(\mc{S})$.  There is a unique extension of $m^*$ to a derivation
\begin{displaymath}
\Delta:\Sym(\mc{S}) \to \Sym(\mc{S}).
\end{displaymath}
Here by ``derivation'' we mean $\Delta$ satisfies the Leibniz rule and interacts correctly with divided powers
(Schur functors).  A $\Delta$-module is then just an object $M$ of $\Sym(\mc{S})$ together with a map
$\Delta M \to M$ which satisfies a certain associativity condition, which we do not write out.  The map
$\Delta M \to M$ precisely records the functoriality of $M$ with respect to little maps in $\Vec^{\Delta}$, and
the associativity condition ensures that $M$ extends to a functor with respect to all maps in $\Vec^{\Delta}$.
The image of the map $\Delta M \to M$ is $M^{\rm{old}}$, and so its cokernel is $\Psi(M)$.

There is an analogy between $\Delta$-modules and graded $\C[t]$-modules.  The category $\Sym(\mc{S})$ is
analogous to the category of graded vector spaces.  The functor $\Phi$ is analogous to the functor which takes a
graded vector space $V$ to the free graded $\C[t]$-module $\C[t] \otimes V$, while the functor $\Psi$ is analogous to
the functor which takes a graded $\C[t]$-module $M$ to the graded vector space $M \otimes_{\C[t]} \C$.  The map
$\Delta M \to M$ is analogous to multiplication by $t$, while the space $M^{\rm{old}}$ is analogous to the image
of $t$.  One might hope that $M \mapsto L\Psi(M)$ provides an equivalence between the derived category
of $\Mod_{\Delta}$ and some other natural derived category, in analogy with Koszul duality; we have not worked
this out.

\section{Hilbert series}
\label{s:hilbert}

In this section we develop the theory of Hilbert series for certain objects of $\Sym(\Vec)$, $\Sym(\mc{S})$ and
$\Mod_{\Delta}$.  The main results are rationality theorems.  If $A$ is a finitely generated graded ring, in the usual
sense, one can prove the rationality of its Hilbert series by picking a surjection $P \to A$, where $P$ is a polynomial
ring, resolving $A$ by free $P$-modules and then explicitly computing the Hilbert series of a free $P$-module.  The key
fact that makes this work is that $P$ has finite global dimension.  In the setting of twisted commutative algebras,
this approach is no longer viable:  the twisted commutative algebra $\Sym(U \langle 1 \rangle)$ has infinite global
dimension for any non-zero $U$.  The reason for this is that no wedge power of $U\langle 1 \rangle$ vanishes, so the
Koszul complex does not terminate!  We get around this problem by relating Hilbert series of twisted commutative
algebras to equivariant Hilbert series of usual rings, where we can use the usual methods.  To study Hilbert series
of objects in $\Sym(\mc{S})$, we relate them to Hilbert series of twisted commutative algebras.  Finally, to study
Hilbert series of objects in $\Mod_{\Delta}$ (what we ultimately care about), we relate them to Hilbert series of
objects in $\Sym(\mc{S})$.

\subsection{Hilbert series in $\Sym(\Vec)$}

Let $M$ be an object of $\Sym(\Vec)$, taken in the sequence model.  We assume each $M_n$ is finite dimensional.  We
define the \emph{Hilbert series} $H_M$ of $M$ by:
\begin{displaymath}
H_M(t)=\sum_{n=0}^{\infty} \frac{1}{n!} (\dim{M_n}) \, t^n.
\end{displaymath}
Of course, $H_M(t)$ is simply the element $[M]$ of $\Sym(K(\Vec))=\Q \lbb t \rbb$.  Our goal in this section is to
demonstrate the following theorem:

\begin{theorem}
\label{hilbert}
Let $A$ be a twisted commutative algebra finitely generated in order 1 and let $M$ be a finitely generated
$A$-module.  Then $H_M(t)$ is a polynomial in $t$ and $e^t$.
\end{theorem}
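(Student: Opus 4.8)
The plan is to reduce the computation of $H_M(t)$ to an equivariant Hilbert series computation for a genuine finitely generated graded module over a polynomial ring, where classical methods apply. First, since $M$ is a finitely generated module over a twisted commutative algebra $A$ generated in order $1$, we may write $A$ as a quotient of $\Sym(U\langle 1\rangle)$ for a finite dimensional vector space $U$, say $d=\dim U$, and view everything in the $\GL$-model: $A$ becomes an honest commutative $\C$-algebra $\wt A$, a quotient of the polynomial ring $\Sym(U\otimes \C^\infty)$, carrying a $\GL(\infty)$-action, and $M$ becomes a finitely generated $\wt A$-module $\wt M$ with a compatible $\GL(\infty)$-action. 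The key point, exactly as in the proof of Theorem~\ref{tc:noeth}, is that only partitions with at most $d$ rows occur in $A$ and in $M$ (enlarging $U$ if necessary so that this holds for $M$ as well), so passing from $\GL(\infty)$ to $\GL(d,\C)$ loses no information: the $\GL(d)$-subrepresentation $M^{[d]}$ obtained by restricting to $U\otimes \C^d\subset U\otimes\C^\infty$ is a finitely generated module over the finitely generated graded $\C$-algebra $A^{[d]}=\wt A/(\text{stuff outside }\C^d)$, which is a quotient of the ordinary polynomial ring $\Sym(U\otimes\C^d)$ in $d^2$ variables, and the multiplicities of the Schur functors $\bS_\lambda$ in $M$ are recovered from the $\GL(d)$-isotypic decomposition of $M^{[d]}$.

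Next I would relate $H_M(t)$ to the $\GL(d)$-equivariant Hilbert series of $M^{[d]}$. In the sequence model $M_n=\bigoplus_\lambda (\Sp_\lambda)^{\oplus m_\lambda^{(n)}}\otimes(\text{something})$; unwinding the equivalences, $\dim M_n = \sum_{\lambda\vdash n} m_\lambda \cdot \dim \Sp_\lambda$ where $m_\lambda$ is the multiplicity of $\bS_\lambda$ in $M$, and $\dim\bS_\lambda(\C^\infty)$-type bookkeeping gives $\frac{1}{n!}\dim M_n = \sum_{\lambda\vdash n} m_\lambda \cdot \frac{\dim\Sp_\lambda}{n!} = \sum_{\lambda \vdash n} \frac{m_\lambda}{\prod_{\text{hooks}} h(\square)}$ by the hook length formula. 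The graded equivariant Hilbert series of $M^{[d]}$ is $\sum_\lambda m_\lambda\, s_\lambda(x_1,\dots,x_d)$ (a formal sum with $x_i$ the $\GL(d)$-weights coming from the $\C^\infty$-grading). By the standard theory of Hilbert series of finitely generated graded modules over a polynomial ring with torus action, this equivariant series is a rational function $\frac{P(x_1,\dots,x_d;q)}{\prod(1-q^{a_i}\chi_i)}$ of the weights and the grading variable. The desired $H_M(t)$ is obtained from the equivariant series by the substitution that sends $s_\lambda \mapsto \frac{t^{|\lambda|}}{\prod h(\square)}$, i.e.\ by specializing the $\GL$-character variables appropriately and collapsing the internal grading; concretely one checks that $\sum_\lambda \frac{t^{|\lambda|}}{\prod h(\square)} s_\lambda$-type generating identities turn a rational function in the weights into a polynomial in $t$ and $e^t$, using the exponential specialization of symmetric functions (the principal specialization $p_k\mapsto$ suitable values forces $\sum_n \frac{t^n}{n!}h_n \mapsto e^{\text{const}\cdot t}$, etc.). I would carry this out by first treating the free case $M=A\otimes F$, where $[\Sym(U\langle 1\rangle)]=\exp([U\langle 1\rangle])=e^{(\dim U)t}$ exactly as noted after the $[\Sym(A)]=\exp([A])$ formula, so $H_{A\otimes F}(t)$ is manifestly a polynomial (coming from $F$) times $e^{(\dim U)t}$; then a finite free resolution of $M^{[d]}$ over the polynomial ring $\Sym(U\otimes\C^d)$ — which exists and terminates by Hilbert's syzygy theorem applied in the ordinary graded setting — produces a finite alternating sum expressing $[M]$ in terms of $[A\otimes F_i]$'s, hence $H_M(t) = \sum_i (-1)^i H_{A\otimes F_i}(t)$, a polynomial in $t$ and $e^t$.

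The main obstacle, and the step requiring the most care, is making the substitution $s_\lambda\mapsto t^{|\lambda|}/\prod h(\square)$ compatible with the rational-function structure — i.e.\ verifying that the exponential specialization of the symmetric-function Hilbert series of a finitely generated graded module over $\Sym(\C^{d^2})$ lands in $\C[t,e^t]$ rather than some larger ring, and tracking how the internal ($\C^\infty$-)grading variable $q$ and the $\GL(d)$-weight variables interact under this collapse. Concretely: one must resolve $M^{[d]}$ by free $\Sym(U\otimes\C^d)$-modules equivariantly (possible since the resolution can be taken $\GL(d)$-equivariant), note that $\Sym(U\otimes\C^d)$ itself specializes under $s_\lambda\mapsto t^{|\lambda|}/\prod h(\square)$ to $e^{dt}$ by the Cauchy-type identity $\sum_\lambda s_\lambda(U)\cdot \frac{t^{|\lambda|}}{\prod h(\square)} "=" $ the exponential of $\dim U \cdot t$, and conclude by finiteness of the resolution. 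The bookkeeping that $\dim\Sp_\lambda/|\lambda|! = 1/\prod_{\square\in\lambda} h(\square)$ and that this is exactly the exponential specialization is the hook content formula; I would cite \cite[Ch.~I]{...} — but since the paper only permits citing results stated in the excerpt, I would instead prove the needed identity $H_{\Sym(U\langle1\rangle)}(t)=e^{(\dim U)t}$ directly from $[\Sym(A)]=\exp([A])$ and then bootstrap via the finite free resolution, which avoids invoking the hook length formula explicitly and keeps the argument self-contained.
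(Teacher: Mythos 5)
Your skeleton (pass to the $\GL$-model, reduce to $A=\Sym(U\langle 1\rangle)$, bound the number of rows occurring in $M$ by some $d$ so that evaluation on $\C^d$ loses no information, and use a finite $\GL(d)$-equivariant free resolution of $M(\C^d)$ over the polynomial ring $\Sym(U\otimes\C^d)$) is exactly the paper's, and your free-module computation $H_{A\otimes F}(t)=H_F(t)\,e^{(\dim U)t}$ is correct. But the step you use to convert the finite resolution into a statement about $H_M(t)$ is wrong. The identity you assert, $H_M(t)=\sum_i(-1)^iH_{A\otimes F_i}(t)$, where $A(\C^d)\otimes E_i$ are the terms of the resolution of $M(\C^d)$ and $F_i$ is the tautological lift of $E_i$ to $\mc{S}$, is false: the finite resolution of $M(\C^d)$ does not come from a finite resolution of $M$ over $A$ (as noted before Theorem~\ref{hilbert}, $\Sym(U\langle 1\rangle)$ has infinite global dimension), and the lifted modules $A\otimes F_i$ contain Schur constituents with more than $d$ rows which vanish on $\C^d$ but do contribute to their Hilbert series. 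Concretely, take $A=\Sym(\C\langle 1\rangle)$ and $M=\C$, so $H_M(t)=1$; for any $d$ the minimal resolution of $M(\C^d)=\C$ over $\C[x_1,\dots,x_d]$ is the Koszul complex with $E_i=\lw{i}{\C^d}$, hence $F_i=\lw{i}{}$ and $H_{A\otimes F_i}(t)=\tfrac{t^i}{i!}e^t$, and your formula returns $e^t\sum_{i=0}^d\tfrac{(-t)^i}{i!}\neq 1$. The same issue undermines the other route you sketch: the exponential specialization $s_\lambda\mapsto \dim\Sp_\lambda\,t^{|\lambda|}/|\lambda|!$ is only a linear map on the span of the $s_\lambda(x_1,\dots,x_d)$ with at most $d$ rows, not a ring homomorphism of symmetric polynomials in $d$ variables (already in one variable $s_{(1)}^2=s_{(2)}$, while $t^2\neq t^2/2$), so it cannot simply be substituted into the rational expression $p(t;\alpha)/\prod_i(1-\alpha_it)^n$, and in particular the specialization of the character of $A(\C^d)\otimes E_i$ is not $H_A(t)$ times a polynomial.

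What is missing is precisely the content of the paper's two key lemmas. Lemma~\ref{hilbertlem} implements the multiplicity-extracting transform honestly, via Weyl's integration formula, writing $H^*_M(t)$ as $\tfrac{1}{d!}\int_T H^*_{M(\C^d),T}(t;\alpha)\,\vert\Delta(\alpha)\vert^2\,(1-\sum\ol{\alpha}_i)^{-1}\,d\alpha$; and Lemma~\ref{sumlem} then shows by an explicit multinomial computation that the resulting sums $\sum_k\qbinom{k}{n}{}C_{k+e}t^{\vert k\vert}$ are rational with poles only at $t=1/a$ for integers $1\le a\le d$, which is what translates into $H_M\in\C[t,e^t]$. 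Your reductions up to and including the rational form of the equivariant Hilbert series of $M(\C^d)$ agree with the paper, but without a correct replacement for this conversion step (which is where the genuine work lies, and which you flagged but then resolved with the erroneous alternating-sum identity) the argument does not establish the theorem.
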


Define $H^*_M(t)$ similarly to $H_M(t)$, but without the factorial. The theorem is equivalent to the following
statement, which is what we actually prove:  we have
\begin{displaymath}
H_M^*(t)=\sum_{k=0}^d \frac{p_k(t)}{(1-kt)^{a_k}}
\end{displaymath}
for some integer $d$, polynomials $p_k(t)$ and non-negative integers $a_k$.  Note that for a module over a graded ring,
in the usual sense, the Hilbert series only has a pole at $t=1$, while our Hilbert series for modules over twisted
commutative algebras can have poles at $t=1/k$ for any non-negative integer $k$.

The above theorem only applies to modules over twisted commutative algebras generated in order 1, and is false
more generally.  For example, let $M=A=\Sym((\C^{\infty})^{\otimes 2})$, a twisted commutative algebra generated in
order 2.  Then $H_M(t)=e^{t^2}$.  Although this is not a polynomial in $t$ and $e^t$, it is a very reasonable function,
and one can hope that there is a nice generalization of Theorem~\ref{hilbert}.

Before getting into the proof of Theorem~\ref{hilbert} we introduce equivariant Hilbert series.  Say a group $G$
acts on an object $M$ of $\Sym(\Vec)$.  We define its \emph{$G$-equivariant Hilbert series} $H^*_{M, G}$ by:
\begin{displaymath}
H^*_{M, G}(t)=\sum_{n=0}^{\infty} [M_n] t^n
\end{displaymath}
where $[M_n]$ denotes the class of $M_n$ in the Grothendieck group $K(G)$.  Thus $H^*_{M, G}$ is a power series with
coefficients in the ring $K(G)$.  We will need to use these Hilbert series in our proof of Theorem~\ref{hilbert}
and we will also need a generalization of Theorem~\ref{hilbert} to the equivariant setting.

We now begin the proof of Theorem~\ref{hilbert}.  Thus let $A$ and $M$ be given.  Since $A$ is finitely generated
in order one, it is a quotient of $\Sym(U\langle 1 \rangle)$ for some finite dimensional vector space $U$.  Of course,
$M$ is a finitely generated module over $\Sym(U\langle 1 \rangle)$.  It thus suffices to consider the case where
$A=\Sym(U\langle 1 \rangle)$.  Now, regard $A$ and $M$ in the Schur model.  If $\bS_{\lambda}$ occurs in $A$ then
$\lambda$ has at most $\dim{U}$
rows.  Since $M$ is finitely generated, it is a quotient of $A \otimes S$ for some finite length object $S$ of
$\mc{S}$; it follows that there is an integer $d$ such that only those $\bS_{\lambda}$ for which $\lambda$ has at
most $d$ rows appear in $M$.  We therefore do not lose information by considering $M(\C^d)$ with its $\GL(d)$ action.
In fact, we can even consider $M(\C^d)$ with its $T$ action without losing information, where $T$ is the diagonal
torus in $\GL(d)$.  The main idea of the proof of Theorem~\ref{hilbert} is to relate $H^*_M$ to $H^*_{M(\C^d), T}$,
prove that the latter is of a specific form and then deduce from this the rationality of $H^*_M$.  (One can regard any
graded $\C$-algebra as a twisted commutative algebra.  Thus $H^*_{M(\C^d), T}$ makes sense.  In fact, it agrees with
the usual $T$-equivariant Hilbert series of $M(\C^d)$.)

We need to introduce a bit of notation related to $T$.  We let $\alpha_1, \ldots, \alpha_d$ be the standard projections
$T \to \G_m$.  We
define an involution of the coordinate ring of $T$, denoted with an overline, by $\ol{\alpha}_i=\alpha_i^{-1}$, and we
write $\vert x \vert^2$ for $x \ol{x}$.  We let $\Delta(\alpha)$ be the discriminant $\prod_{i<j} (\alpha_i-\alpha_j)$.
For a character $\chi$ of $T$ we define $\int_T \chi(\alpha) d\alpha$ to be 1 if $\chi$ is trivial and 0 otherwise,
and we extend $\int_T d\alpha$ linearly to all functions on $T$.
(The symbol $\int_T d\alpha$ is just notation and does not indicate actual integration.)  If $\chi_1$ and $\chi_2$ are
characters of irreducible representations of $\GL(d)$ then Weyl's integration formula (see \cite[\S 26.2]{FH}), stated
in our language, reads
\begin{displaymath}
\frac{1}{d!} \int_T \chi_1(\alpha) \chi_2(\ol{\alpha}) \vert \Delta(\alpha) \vert^2 d\alpha=\begin{cases}
1 & \textrm{if $\chi_1=\chi_2$} \\
0 & \textrm{if $\chi_1 \ne \chi_2$}
\end{cases}
\end{displaymath}
We identify $K(T)$ with $\Q[\alpha_i, \alpha_i^{-1}]$ so that a $T$-equivariant Hilbert series can be identified with
a power series in $t$ whose coefficients are Laurent polynomials in the $\alpha_i$.  The following is the key step in
our understanding of $H^*_M$:

\begin{lemma}
\label{hilbertlem}
We have
\begin{displaymath}
H^*_M(t)=\frac{1}{d!} \int_T H^*_{M(\C^d), T}(t; \alpha) \frac{\vert \Delta(\alpha) \vert^2}{1-\sum \ol{\alpha}_i}
d\alpha.
\end{displaymath}
\end{lemma}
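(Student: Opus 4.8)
The plan is to reduce the identity to the orthogonality of Schur polynomials together with one classical symmetric‑function expansion, and then to verify it one $t$‑degree at a time. First I would move to the Schur model: by Schur--Weyl duality the object $M$ is the functor $V\mapsto\bigoplus_n(V^{\otimes n}\otimes M_n)_{S_n}$, and since every $\bS_\lambda$ occurring in $M$ has at most $d$ rows, nothing is lost by passing to $M(\C^d)$ with its $\GL(d)$‑action. Decomposing $M_n=\bigoplus_\lambda m_{n,\lambda}\,\Sp_\lambda$ as $S_n$‑representations gives $M(\C^d)_n=\bigoplus_\lambda m_{n,\lambda}\,\bS_\lambda(\C^d)$, so, writing $s_\lambda(\alpha)$ for the $T$‑character of $\bS_\lambda(\C^d)$ (a Schur polynomial in $\alpha_1,\dots,\alpha_d$),
\[
H^*_{M(\C^d),T}(t;\alpha)=\sum_{n\ge 0}t^n\sum_{\lambda\vdash n}m_{n,\lambda}\,s_\lambda(\alpha),
\qquad
H^*_M(t)=\sum_{n\ge 0}t^n\sum_{\lambda\vdash n}m_{n,\lambda}\,\dim\Sp_\lambda .
\]
Hence, comparing coefficients of $t^n$, the lemma will follow once I show that for every partition $\lambda$ with at most $d$ rows,
\[
\frac{1}{d!}\int_T s_\lambda(\alpha)\,\frac{\vert\Delta(\alpha)\vert^2}{1-\sum\ol{\alpha}_i}\,d\alpha=\dim\Sp_\lambda .
\]

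To prove this last identity I would expand the kernel. With $p_1(\ol\alpha)=\sum_{i=1}^d\ol{\alpha}_i$ the geometric series gives $(1-\sum\ol{\alpha}_i)^{-1}=\sum_{k\ge 0}p_1(\ol\alpha)^k$, and the classical expansion of a power‑sum in the Schur basis, specialized at the partition $(1^k)$, reads $p_1^k=\sum_{\mu\vdash k}\chi^\mu(1^k)\,s_\mu=\sum_{\mu\vdash k}(\dim\Sp_\mu)\,s_\mu$ (as symmetric polynomials in $d$ variables; terms with $\ell(\mu)>d$ vanish identically, consistently). Therefore
\[
\frac{\vert\Delta(\alpha)\vert^2}{1-\sum\ol{\alpha}_i}=\sum_{k\ge 0}\ \sum_{\mu\vdash k}(\dim\Sp_\mu)\,s_\mu(\ol\alpha)\,\vert\Delta(\alpha)\vert^2 ,
\]
and multiplying by $s_\lambda(\alpha)$ and applying Weyl's integration formula in the stated form, $\tfrac1{d!}\int_T s_\lambda(\alpha)s_\mu(\ol\alpha)\vert\Delta(\alpha)\vert^2\,d\alpha=\delta_{\lambda\mu}$, kills every term with $\mu\neq\lambda$; the one remaining term contributes exactly $\dim\Sp_\lambda$. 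Combined with the reduction of the first paragraph and summed over $n$, this yields the lemma.

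The one thing to be careful about --- and the closest this argument has to an obstacle --- is the handling of infinite sums: $H^*_{M(\C^d),T}$ has Laurent‑polynomial coefficients, but $\vert\Delta(\alpha)\vert^2/(1-\sum\ol{\alpha}_i)$ is only a formal power series in the $\ol{\alpha}_i$, so $\int_T$ is applied to an infinite sum. This is legitimate because $\int_T$ is computed degree by degree in $t$: for each fixed $n$ the coefficient of $t^n$ in $H^*_{M(\C^d),T}$ is the finite sum $\sum_{\lambda\vdash n}m_{n,\lambda}s_\lambda(\alpha)$, and Weyl orthogonality then makes only finitely many of the $\mu$ contribute, so the extension of $\int_T$ ``by linearity to all functions on $T$'' is only ever used to extract a constant term, which is unambiguous. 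Equivalently: the $\Q$‑linear functional on the span of $\{s_\lambda:\ell(\lambda)\le d\}\subset K(\GL(d))$ sending $s_\lambda\mapsto\dim\Sp_\lambda$ is manifestly well defined, and the lemma asserts precisely that it is represented, through the Weyl pairing, by the kernel $\vert\Delta(\alpha)\vert^2/(1-\sum\ol{\alpha}_i)$; checking this on the basis $\{s_\lambda\}$ is the computation just sketched.
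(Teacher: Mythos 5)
Your proposal is correct and takes essentially the same route as the paper: both arguments reduce the lemma, via Weyl's integration formula, to the identity $\sum_{\lambda}(\dim \Sp_{\lambda})\, s_{\lambda}(\alpha)=\bigl(1-\sum_i \alpha_i\bigr)^{-1}$, the only cosmetic difference being that the paper obtains this kernel identity from Schur--Weyl duality applied to $\bigoplus_{k\ge 0}(\C^d)^{\otimes k}$, while you quote the equivalent character-level expansion $p_1^k=\sum_{\mu\vdash k}(\dim \Sp_{\mu})\, s_{\mu}$ and verify the pairing basis element by basis element. Your closing remark about applying $\int_T$ degree by degree in $t$ is a sound justification of a point the paper leaves implicit.
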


\begin{proof}
Write $M=\bigoplus \bS_{\lambda}^{\oplus m_{\lambda}}$, the sum taken over $\lambda$.  We then have:
\begin{displaymath}
H^*_M(t)=\sum_{\lambda} m_{\lambda} \cdot (\dim \Sp_{\lambda}) \cdot t^{\vert \lambda \vert}.
\end{displaymath}
On the other hand
\begin{displaymath}
H^*_{M(\C^d), T}(t; \alpha)=\sum_{\lambda} m_{\lambda} \cdot \textrm{(the character of $\bS_{\lambda}(\C^d)$)}
\cdot t^{\vert \lambda \vert}.
\end{displaymath}
Put
\begin{displaymath}
f(\alpha)=\sum_{\lambda} \textrm{(the character of $\bS_{\lambda}(\C^d)$)} \cdot \dim{\Sp_{\lambda}}.
\end{displaymath}
Weyl's integration formula now gives us
\begin{displaymath}
H^*_M(t)=\frac{1}{d!} \int_{T'} H^*_{M(\C^d), T}(t; \alpha) f(\ol{\alpha}) \vert \Delta(\alpha) \vert^2 d\alpha.
\end{displaymath}
We must compute $f(\alpha)$.  Observe:
\begin{displaymath}
\bigoplus_{k=0}^{\infty} (\C^d)^{\otimes k}=\bigoplus_{\lambda} \bS_{\lambda}(\C^d) \otimes \Sp_{\lambda}.
\end{displaymath}
The character of the right side is $f(\alpha)$.  The character of the left side is
\begin{displaymath}
\sum_{k=0}^{\infty} \left( \sum_{i=1}^d \alpha_i \right)^k=\frac{1}{1-\sum \alpha_i}.
\end{displaymath}
This yields the stated formula.
\end{proof}

We have thus related $H^*_M$, what we care about, to $H^*_{M(\C^d), T}$, which should be easier to understand since
$M(\C^d)$ is a finitely generated module over the polynomial ring $A(\C^d)$.  We now see that $H^*_{M(\C^d), T}$ is
indeed easy to understand:

\begin{lemma}
We have
\begin{displaymath}
H^*_{M(\C^d), T}(t; \alpha)=\frac{p(t; \alpha)}{\prod_{i=1}^d (1-\alpha_i t)^n}
\end{displaymath}
where $p(t; \alpha)$ is a polynomial and $n=\dim{U}$.
\end{lemma}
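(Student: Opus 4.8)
The plan is to recognize $M(\C^d)$ as an ordinary finitely generated graded module over the polynomial ring $A(\C^d)=\Sym(U\otimes\C^d)$ carrying a compatible $T$-action, and then to compute its equivariant Hilbert series by the classical device of resolving by free modules, which is available here because the polynomial ring has finite global dimension.

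First I would pin down the $T$-representation structure. The torus $T$ acts on $\C^d$ through the characters $\alpha_1,\dots,\alpha_d$, trivially on $U$; the order-one part of $A$ evaluated at $\C^d$ is $U\otimes\C^d$, so $A(\C^d)$ is a polynomial ring on $nd$ variables, $n=\dim U$ of them of $T$-weight $\alpha_i$ for each $i$, the polynomial grading recording the order. Hence the $T$-equivariant Hilbert series of $A(\C^d)$ itself is $\prod_{i=1}^d(1-\alpha_i t)^{-n}$. Because $M$ is finitely generated over $A$, it is a quotient of $A\otimes S$ for some finite length object $S$ of $\mc{S}$, so $M(\C^d)$ is a finitely generated graded $A(\C^d)$-module equipped with a rational (hence semisimple) action of $T$ commuting with the grading.

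Next I would build a $T$-equivariant graded free resolution $0\to F_k\to\cdots\to F_0\to M(\C^d)\to 0$ with each $F_j=A(\C^d)\otimes_{\C}W_j$ for a finite-dimensional graded $T$-representation $W_j$. Since $T$ is linearly reductive over $\C$, at each stage one may choose finitely many homogeneous generators spanning a $T$-stable subspace (take $W_j$ to be, say, $\Tor_j^{A(\C^d)}(M(\C^d),\C)$, a finite-dimensional graded $T$-representation); the process terminates because $\Sym(\C^{nd})$ has global dimension $nd$ by Hilbert's syzygy theorem. Then, using that the $T$-equivariant Hilbert series is additive on short exact sequences and multiplicative on the tensor factors $A(\C^d)\otimes W_j$, one gets
\begin{displaymath}
H^*_{M(\C^d),T}(t;\alpha)=\sum_{j=0}^{k}(-1)^j H^*_{F_j,T}(t;\alpha)=\Bigl(\sum_{j=0}^{k}(-1)^j\,\textrm{(graded $T$-character of $W_j$)}\Bigr)\cdot\prod_{i=1}^d(1-\alpha_i t)^{-n}.
\end{displaymath}
Setting $p(t;\alpha)$ equal to the alternating sum in parentheses — a polynomial in $t$ whose coefficients are Laurent polynomials in the $\alpha_i$, in fact ordinary polynomials since every $T$-weight occurring in $M(\C^d)$ is a non-negative combination of the $\alpha_i$ — yields the asserted formula.

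The only point needing a little care is the existence of the equivariant resolution, i.e.\ that the generators and successive syzygy modules can be organized into $T$-stable finite-dimensional spaces; this is entirely standard given that $T$ is reductive and $A(\C^d)$ is noetherian of finite global dimension, but it is the one step where one must invoke reductivity rather than just formal manipulation of series. (Alternatively, one can bypass resolutions by viewing $M(\C^d)$ as a finitely generated $\Z^d$-graded module over $\Sym(\C^{nd})$ via the $T$-weight, quoting the classical rationality of multigraded Hilbert series, and then substituting $\alpha_i\mapsto\alpha_i t$ to also record the order; this recovers the same shape of answer.)
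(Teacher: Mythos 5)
Your proposal is correct and follows essentially the same route as the paper: both reduce to the free case via a $T$-equivariant (minimal) resolution whose terms are $A(\C^d)\otimes \Tor_j^{A(\C^d)}(M(\C^d),\C)$, use additivity of equivariant Hilbert series and finiteness of the resolution over the polynomial ring, and compute $H^*_{A(\C^d),T}=\prod_{i=1}^d(1-\alpha_i t)^{-n}$ directly from $A(\C^d)=\Sym(U\otimes\C^d)$. The only differences are cosmetic (you invoke Hilbert's syzygy theorem and reductivity of $T$ explicitly, and sketch a multigraded alternative), so no changes are needed.
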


\begin{proof}
The terms of the minimal resolution for $M(\C^d)$ over $A(\C^d)$ are $A(\C^d) \otimes E_{\bullet}$ where
\begin{displaymath}
E_i=\Tor_i^{A(\C^d)}(M(\C^d), \C).
\end{displaymath}
Since $\Tor$ is functorial, each $E_i$ carries an action of $\GL(d)$ (and
therefore $T$), and the minimal resolution is equivariant (or rather, can be taken to be so).  Thus the $T$-equivariant
Hilbert series for $M$ is the alternating sum of those for $A(\C^d) \otimes E_i$; of course, each of these is the
product of those for $A(\C^d)$ and $E_i$.  Since $E_i$ is a finite dimensional representation of $T$ its Hilbert series
is a polynomial.  Thus the lemma is reduced to the case $M=A$. Now,
\begin{displaymath}
A(\C^d)=\Sym(U \otimes \C^d)=\Sym(\C^d \oplus \cdots \oplus \C^d)=\Sym(\C^d) \otimes \cdots \otimes \Sym(\C^d)
\end{displaymath}
where $\C^d$ is summed with itself $n=\dim{U}$ times.  We thus find that $H_{A(\C^d), T}$ is the $n$th power of
$H^*_{\Sym(\C^d), T}$.  Similarly, $\Sym(\C^d)=\Sym(\C \oplus \cdots \oplus \C)$, where there are $d$ copies of $\C$ and
$T$ acts on the $i$th one by the character $\alpha_i$.  Thus $H^*_{\Sym(\C^d), T}
=\prod (1-\alpha_i t)^{-1}$.  This proves the lemma.
\end{proof}

Combining the two lemmas, we obtain an expression
\begin{displaymath}
H^*_M(t)=\int_T \frac{p(t; \alpha)}{\prod (1-\alpha_i t)^n} \frac{1}{1-\sum \ol{\alpha}_i} d\alpha
\end{displaymath}
where $p(t; \alpha)$ is a polynomial in $t$, the $\alpha_i$ and the $\alpha_i^{-1}$.  (We have absorbed the $1/d!$ and
$\vert \Delta(\alpha) \vert^2$ into $p$.)  Expanding the integrand into a power series, we find
\begin{displaymath}
H^*_M(t)=\int_T \left[ \sum_{k, \ell} \qbinom{k}{n}{} \alpha^k \left( \sum \ol{\alpha_i} \right)^{\ell} p(t; \alpha)
t^{\vert k \vert} \right] d \alpha
\end{displaymath}
where the sum is taken over $k \in \Z_{\ge 0}^d$ and $\ell \in \Z_{\ge 0}$.  Here
\begin{displaymath}
\qbinom{k}{n}{}=\binom{k_1+n-1}{n-1} \cdots \binom{k_d+n-1}{n-1}, \qquad \alpha^k=\alpha_1^{k_1} \cdots \alpha_d^{k_d}
\qquad \textrm{and} \qquad \vert k \vert=k_1+\cdots+k_d.
\end{displaymath}
We must show that this is a rational function in $t$.  It suffices, by linearity, to treat the case where $p(t;
\alpha)=t^{e_0} \alpha_1^{e_1} \cdots \alpha_d^{e_d}$ where the $e_i$ are integers.  Of course, the $t^{e_0}$
factor does not really affect anything, so we leave it out.  We are thus reduced to showing that
\begin{displaymath}
\int_T \left[ \sum_{k, \ell} \qbinom{k}{n}{} \alpha^{k+e} \left( \sum \ol{\alpha_i} \right)^{\ell} t^{\vert k \vert}
\right] d\alpha
\end{displaymath}
is rational.  By degree considerations, the $(k, \ell)$ term in the above sum integrates to zero unless $\ell=\vert k
+e \vert$.  Furthermore, when $\ell=\vert k+e \vert$ only one monomial in $( \sum \ol{\alpha_i} )^{\ell}$ contributes
a non-zero quantity, namely the one where $\ol{\alpha}_i$ has exponent $k_i+e_i$.  We therefore find that the above is
equal to
\begin{displaymath}
\sum_k \qbinom{k}{n}{} C_{k+e} t^{\vert k \vert}
\end{displaymath}
where
\begin{displaymath}
C_k=\frac{(\vert k \vert)!}{(k_1)! \cdots (k_d)!}
\end{displaymath}
is the multinomial coefficient.  (We use the convention that $C_k=0$ if any of the coordinates of $k$ are negative.)
Theorem~\ref{hilbert} now follows from the following lemma:

\begin{lemma}
\label{sumlem}
Let $d$ be a positive integer, let $e \in \Z^d$ and let $p$ be a polynomial of $d$ variables.  Then
\begin{displaymath}
\sum p(k) C_{k+e} t^{\vert k \vert}
\end{displaymath}
is a rational function of $t$, with poles only at $t=1/a$ where $1 \le a \le d$ is an integer.  (The sum is taken over
$k \in \Z_{\ge 0}^d$.)
\end{lemma}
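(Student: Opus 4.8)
The plan is to reduce the statement to the simplest possible building block and then evaluate that block explicitly. First I would observe that the operator $p \mapsto \sum_k p(k) C_{k+e} t^{|k|}$ is $\Q$-linear in $p$, so it suffices to treat the case where $p$ is a monomial $k_1^{m_1}\cdots k_d^{m_d}$. Better yet, since the falling-factorial monomials $\binom{k_1}{m_1}\cdots\binom{k_d}{m_d}$ also form a $\Q$-basis for polynomials, I would work with those instead; the point is that $\binom{k_i}{m_i}$ is exactly a finite-difference-friendly quantity, and multiplying the multinomial coefficient $C_k$ by such a binomial produces (up to a constant and a shift) another multinomial-type coefficient. Then I would separately dispose of the shift $e$: writing $k' = k+e$, the sum becomes $\sum_{k' \ge \max(0,e)} p(k'-e) C_{k'} t^{|k'|-|e|}$, which only changes the polynomial (harmlessly, by linearity) and multiplies by the constant $t^{-|e|}$, plus it truncates the range of summation to a shifted orthant — but a sum over a shifted orthant differs from the sum over the full orthant $\Z_{\ge 0}^d$ by finitely many ``boundary'' pieces, each of which is itself a sum of the same shape in fewer variables. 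So by induction on $d$ I may assume $e = 0$ and the sum is over all of $\Z_{\ge 0}^d$.

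The core case is then to show that $G_p(t) := \sum_{k \in \Z_{\ge 0}^d} p(k)\, C_k\, t^{|k|}$ is rational with poles only at $t = 1/a$ for $1 \le a \le d$. The clean way to see this is to recognize $\sum_k C_k t^{|k|} = \sum_{n \ge 0}\big(\sum_{|k|=n} \binom{n}{k}\big) t^n = \sum_{n\ge 0} d^n t^n = \tfrac{1}{1-dt}$, exactly the computation of $\sum (\sum \alpha_i)^n$ that appeared in Lemma~\ref{hilbertlem} (with all $\alpha_i$ set to $1$). To bring in the polynomial factor, I would introduce auxiliary variables: consider $F(t_1,\dots,t_d) = \sum_k C_k\, t_1^{k_1}\cdots t_d^{k_d}$. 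This is a specialization of the multinomial theorem in disguise: grouping by $n = |k|$, $\sum_{|k|=n}\binom{n}{k} t_1^{k_1}\cdots t_d^{k_d} = (t_1+\cdots+t_d)^n$, so $F = 1/(1 - (t_1+\cdots+t_d))$. Now the operator ``multiply the $k$-th coefficient by $k_i$'' is $t_i \partial_{t_i}$, so for any polynomial $p$ the series $\sum_k p(k) C_k t_1^{k_1}\cdots t_d^{k_d}$ equals $p(t_1\partial_{t_1},\dots,t_d\partial_{t_d})\, F$, which is a rational function of $t_1,\dots,t_d$ whose only pole is along $t_1+\cdots+t_d = 1$ (differentiating $1/(1-\sum t_i)$ only raises the order of that pole). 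Finally I would set all $t_i = t$ to recover $G_p(t)$; specializing a rational function with denominator a power of $(1 - \sum t_i)$ gives a rational function with denominator a power of $(1-dt)$ — in particular the only pole is at $t = 1/d$, which is even a little stronger than claimed.

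The one place that needs genuine care, and which I expect to be the main obstacle, is the bookkeeping in the induction on $d$ when handling the shift vector $e$ together with the truncation of the summation range. When some coordinates $e_i$ are negative the ``boundary correction'' terms are sums over lower-dimensional faces $\{k_i = 0,1,\dots,|e_i|-1\}$ of the orthant, and one must check that each such face-sum is again of the form covered by the lemma in dimension $d-1$ (after absorbing the fixed coordinate's contribution into the polynomial and a power of $t$), so that the induction closes and the set of allowed poles stays within $\{1/a : 1 \le a \le d\}$. This is purely combinatorial and elementary, but it is the step where one could easily make a sign or index error; everything else is the formal computation $F = 1/(1-\sum t_i)$ and its differentiation, which is routine. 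Once Lemma~\ref{sumlem} is in hand, Theorem~\ref{hilbert} follows as indicated in the text, since $H^*_M(t) = \sum_k \qbinom{k}{n}{} C_{k+e} t^{|k|}$ with $\qbinom{k}{n}{}$ a product of polynomials in the $k_i$, hence itself a polynomial in $k$.
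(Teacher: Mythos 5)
Your proposal is correct, but its computational engine differs from the paper's. The paper never introduces auxiliary variables: it first eliminates the polynomial $p$ using the identity $k_1 C_{k_1,\ldots,k_d}=|k|\,C_{k_1-1,k_2,\ldots,k_d}$, which lowers the degree of $p$ at the cost of shifting $e$, reducing to the case where $p$ is a polynomial in $|k|$ alone; that case is handled by applying powers of $t\,\frac{d}{dt}$ to $\sum_k C_{k+e}t^{|k|}$, and the shift is then removed coordinate by coordinate exactly as you suggest, with the face sums rewritten as $\frac{t^{k_1}}{k_1!}\sum_{k'\ge e'}(|k'|+k_1)\cdots(|k'|+1)\,C_{k'}t^{|k'|}$, i.e.\ as $(d-1)$-dimensional instances of the lemma (this is precisely why the statement carries a general $p$). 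You instead dispose of $p$ in one stroke via the multivariate generating function $\sum_k C_k\,t_1^{k_1}\cdots t_d^{k_d}=(1-(t_1+\cdots+t_d))^{-1}$ and the Euler operators $t_i\partial_{t_i}$, then specialize $t_i=t$; this is cleaner for the full-orthant piece and yields the slightly sharper fact that the $e=0$ sum has its only pole at $t=1/d$, the poles at $1/a$ with $a<d$ entering only through the boundary corrections. The step you flag as delicate is indeed where the paper spends its effort, and your sketch of it closes: fixing one coordinate of $k$ turns $C_k$ into the multinomial coefficient of the remaining coordinates times a polynomial in their sum (divided by a factorial) and a power of $t$, so each face sum is an instance of the lemma in dimension $d-1$ with a shift, and the induction on $d$ goes through with the stated pole set. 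Two small points to fix in a careful writeup: the truncation occurs in the coordinates with $e_i>0$ (after substituting $k'=k+e$ the missing slices are $k_i'=0,\ldots,e_i-1$), not where $e_i<0$ --- negative coordinates of $e$ cost nothing because $C_{k'}$ vanishes there; and the prefactor $t^{-|e|}$ can give the individual pieces a spurious pole at $t=0$, which is harmless since the total is a power series (the paper silently makes the same move).
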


\begin{proof}
Observe that the formula
\begin{displaymath}
k_1 C_{k_1, k_2, \ldots, k_d}=\vert k \vert C_{k_1-1, k_2, \ldots, k_d}
\end{displaymath}
is valid for any tuple of integers $k \in \Z^d$.  To prove the lemma, it suffices to treat the case where $p$ is a
monomial.  Thus, if $p$ is not a constant, we can write $p=k_i p'$ for some index $i$ and some smaller monomial $p'$.
We therefore have
\begin{align*}
\sum p(k) C_{k+e} t^{\vert k \vert}
&=\sum k_i p'(k) C_{k+e} t^{\vert k \vert} \\
&=\sum p'(k) (k_i+e_i-e_i) C_{k+e} t^{\vert k \vert} \\
&=\sum p'(k) \vert k+e \vert C_{k+e'} t^{\vert k \vert} - e_i \sum p'(k) C_{k+e} t^{\vert k \vert}
\end{align*}
where $e'$ is obtained from $e$ by replacing $e_i$ with $e_i-1$.  In the right term we have replaced $p$ with a lower
degree polynomial.  In the left term we have replaced $p$ with an equal degree polynomial, but one that is of the
form $p'(k) \vert k \vert$ where $p'$ has smaller degree.  (Note that $\vert k + e \vert=\vert k \vert +
\vert e \vert$.)  It follows that by repeatedly applying the above process, we can reduce to the case where $p$ is
a function of $\vert k \vert$.  Now, note that
\begin{displaymath}
\sum \vert k \vert^n C_{k+e} t^{\vert k \vert}=\left( t \frac{d}{dt} \right)^n \sum C_{k+e} t^{\vert k \vert}.
\end{displaymath}
It thus suffices to show that $\sum C_{k+e} t^{\vert k \vert}$ is a rational function.  We have
\begin{displaymath}
\sum_{k \ge 0} C_{k+e} t^{\vert k \vert}=\sum_{k \ge e} C_k t^{\vert k-e \vert}=t^{-\vert e \vert} \sum_{k \ge e}
C_k t^{\vert k \vert}
\end{displaymath}
where $k \ge e$ means $k_i \ge e_i$ for each $i$.  Now, the terms in the right sum for which some $k_i$ is negative
are zero and therefore do not contribute.  We can thus assume that each $e_i$ is non-negative.  We can also ignore
the $t^{-\vert e \vert}$ factor.  Now, write $k=(k_1, k')$ where $k'$ is a $d-1$ tuple, and do similarly for
$e$.  Then
\begin{displaymath}
\sum_{k \ge e} C_k t^{\vert k \vert}
=\sum_{k_1 \ge e_1, k' \ge e'} C_k t^{\vert k \vert}
=\sum_{k_1 \ge 0, k' \ge e'} C_k t^{\vert k \vert}
-\sum_{k_1=0}^{e_1-1} \sum_{k' \ge e'} C_k t^{\vert k \vert}
\end{displaymath}
Now, for $k_1$ fixed, we have
\begin{displaymath}
\sum_{k' \ge e'} C_k t^{\vert k \vert}=\frac{t^{k_1}}{k_1!} \sum_{k' \ge e'} (\vert k' \vert + k_1) \cdots
(\vert k' \vert +1) C_{k'} t^{\vert k' \vert}.
\end{displaymath}
Thus each of the sums on the right in the previous expression is of the general form that we are considering in this
lemma but with a smaller $d$.  We can therefore assume that they are each rational by induction.  By repeating this
procedure, we can thus reduce to the case $e=0$.  The identity
\begin{displaymath}
\sum_{\vert k \vert=n} C_k=d^n
\end{displaymath}
now gives
\begin{displaymath}
\sum_{k \ge 0} C_k t^{\vert k \vert}=\frac{1}{1-dt}.
\end{displaymath}
This completes the proof.
\end{proof}

\subsection{Equivariant Hilbert series in $\Sym(\Vec)$}

Unfortunately, Theorem~\ref{hilbert} is too weak for our eventual applications.  Before stating the result we need,
we make a definition for the sake of clarity:

\begin{definition}
Let $A$ be a ring.  A series $f \in A \lbb t \rbb$ is \emph{rational} if there exists a polynomial $q \in A[t]$
with $q(0)=1$ such that $qf$ is a polynomial.
\end{definition}

Note that it could be that $f \in A \lbb t \rbb$ is not rational, but that there is an extension $A \subset B$ so
that $f$ is rational when regarded as an element of $B \lbb t \rbb$.  Thus a bit of care needs to be taken with the
definition.  However, we do have the following simple result, the proof of which is left to the reader:

\begin{lemma}
\label{lem:ratl}
Let $A \subset B$ be an inclusion of rings and let $f$ be an element of $A \lbb t \rbb$ such that $f$ is rational when
regarded as an element of $B \lbb t \rbb$.  Then $f$ itself is rational in the following cases:  (1) there is a finite
group $G$ acting on $B$ such that $A=B^G$; (2) $A$ and $B$ are fields.
\end{lemma}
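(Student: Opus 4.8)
The plan is to treat the two cases separately, in each manufacturing an $A$-denominator out of a given $B$-denominator. Throughout, write $f = \sum_{k \ge 0} f_k t^k$ with $f_k \in A$, and fix a denominator $q = \sum_{i=0}^{n} q_i t^i \in B[t]$ with $q_0 = q(0) = 1$ witnessing $B$-rationality, so that $qf$ lies in $B[t]$; let $m$ be its degree.

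For case (1), I would average over $G$. The group acts coefficientwise on $B[t]$ and on $B \lbb t \rbb$ by ring automorphisms fixing $t$, and since $A = B^G$ we have $A[t] = B[t]^G$ and $A \lbb t \rbb = B \lbb t \rbb^G$; in particular $f$ is $G$-fixed. Put $q' = \prod_{g \in G} g(q)$. Then $q'$ lies in $B[t]$, is $G$-invariant (hence in $A[t]$), and $q'(0) = \prod_{g} g(1) = 1$. Moreover $q' f = \bigl(\prod_{g \ne 1} g(q)\bigr)\cdot(qf)$ is a product of two elements of $B[t]$, hence a polynomial over $B$, and it is $G$-invariant (being a product of the $G$-invariant $q'$ and $f$), hence a polynomial over $A$. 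Thus $q'$ exhibits $f$ as rational over $A$; this case is a formality.

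For case (2), the point is a descent statement for linear systems under a field extension. The condition that $qf$ be a polynomial of degree $\le m$ is precisely the homogeneous linear system $\sum_{i=0}^{n} q_i f_{k-i} = 0$ for all $k > m$ (with $f_j = 0$ for $j < 0$) in the unknowns $q_0, \ldots, q_n$; its coefficients $f_{k-i}$ all lie in $A$, and as a linear condition on a finite-dimensional space it is cut out by finitely many of these equations. Let $W_A \subseteq A^{n+1}$ and $W_B \subseteq B^{n+1}$ be the two solution spaces. Since the matrix of a suitable finite subsystem has entries in $A$, its rank is unchanged by the extension $A \subseteq B$, so $W_B = W_A \otimes_A B$. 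Now the coordinate functional $\ell \colon (q_i) \mapsto q_0$ is defined over $A$ and is nonzero on $W_B$ (it takes the value $1$ at our $q$), hence it is nonzero on $W_A$. Choosing $q' \in W_A$ with $q'_0 \ne 0$ and rescaling by $(q'_0)^{-1} \in A$, I obtain $q' \in A[t]$ with $q'(0) = 1$ such that $q'f$ has coefficients $\sum_i q'_i f_{k-i} \in A$ in degrees $\le m$ and zero coefficients in higher degrees; hence $q'f \in A[t]$ and $f$ is rational over $A$.

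The only genuine content is the linear-algebra descent in case (2), and the one subtlety there is to check that the relevant solution space really is finite-dimensional and cut out by equations with coefficients in the smaller ring $A$. Once that is arranged, the invariance of rank --- equivalently, of the dimension of the kernel --- under a field extension does all the work, and the rest is bookkeeping with the coefficients of $qf$.
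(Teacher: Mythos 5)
Your proof is correct; the paper leaves this lemma to the reader, and your argument is the intended one: the norm/averaging trick $q' = \prod_{g \in G} g(q)$ in case (1), and descent for the solution space of a linear system with coefficients in $A$ in case (2). The only point worth making explicit in case (2) is that the finite subsystem you select (say, one spanning the $A$-row space of the full infinite system) cuts out not only $W_A$ over $A$ but also $W_B$ over $B$ --- every omitted equation is an $A$-, hence $B$-, linear combination of the retained ones --- after which the rank argument gives $W_B = W_A \otimes_A B$ and the rest is exactly as you say.
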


The main result of this section is the following:

\begin{theorem}
\label{hilbert2}
Let $G$ be a connected reductive group, let $\Gamma$ be a finite group, let $A$ be a twisted commutative algebra
finitely generated in order 1 on which $G \times \Gamma$ acts and let $M$ be a finitely generated $A$-module with
a compatible action of $G \times \Gamma$.  Then the $G$-equivariant Hilbert series $H^*_{M^{\Gamma}, G}$ of
$M^{\Gamma}$, regarded as an element of the power series ring $K(G) \lbb t \rbb$, is a rational function.
\end{theorem}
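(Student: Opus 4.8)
The plan is to reduce Theorem~\ref{hilbert2} to the non-equivariant rationality statement behind Theorem~\ref{hilbert} by the same torus/Weyl-integration device used in Lemma~\ref{hilbertlem}, but now keeping track of the $G$-action throughout. First I would reduce to the universal case $A = \Sym(U\langle 1 \rangle)$ for a finite-dimensional $U$, exactly as in the proof of Theorem~\ref{hilbert}: since $A$ is finitely generated in order 1 it is a $(G\times\Gamma)$-equivariant quotient of such a free twisted commutative algebra, and $M$ is finitely generated over it. By Proposition~\ref{tc:inv}, $A^{\Gamma}$ is noetherian and $M^{\Gamma}$ is a finitely generated $A^{\Gamma}$-module; and $M^{\Gamma}$ still carries an action of $G$. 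Next, as in the proof of Theorem~\ref{hilbert}, only Schur functors $\bS_{\lambda}$ with $\lambda$ having at most some fixed number $d$ of rows occur in $M$, so $M(\C^d)$ with its $\GL(d)$-action (and hence its maximal torus $T$-action) loses no information; this remains true after taking $\Gamma$-invariants, since $\Gamma$ commutes with the $\GL(d)$-action coming from the Schur-functor slot.

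The heart of the argument is a $G$-equivariant refinement of Lemma~\ref{hilbertlem}. Writing $M = \bigoplus_\lambda \bS_\lambda \otimes U_\lambda$ where now $U_\lambda$ is a $K(G\times\Gamma)$-object (a multiplicity space carrying a $G\times\Gamma$-action in place of the plain multiplicity $m_\lambda$), the same computation with Weyl's integration formula gives
\begin{displaymath}
H^*_{M^{\Gamma}, G}(t) = \frac{1}{d!} \int_T H^*_{(M^\Gamma)(\C^d), T\times G}(t;\alpha)\, \frac{\vert \Delta(\alpha)\vert^2}{1 - \sum \ol{\alpha}_i}\, d\alpha,
\end{displaymath}
where the left side lies in $K(G)\lbb t\rbb$ and the integrand has coefficients in $K(G)[\alpha_i,\alpha_i^{-1}]$; the only change from Lemma~\ref{hilbertlem} is that the "coefficients" now live in the larger ring $K(G)$ rather than $\Q$. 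Then the second lemma following Lemma~\ref{hilbertlem} goes through verbatim with $K(G)$-coefficients: the minimal free resolution of $(M^\Gamma)(\C^d)$ over the polynomial ring $A^\Gamma(\C^d)$ is $T\times G$-equivariant (Tor is functorial), each $\Tor_i$ is a finite-dimensional $T\times G$-representation, so $H^*_{(M^\Gamma)(\C^d), T\times G}$ is a rational function of $t$ of the form $p(t;\alpha)/\prod_i(1-\alpha_i t)^n$ with $p \in K(G)[t,\alpha_i,\alpha_i^{-1}]$. Finally, the power-series expansion and the reduction to sums $\sum p(k) C_{k+e} t^{\vert k\vert}$ carry over, and Lemma~\ref{sumlem} is purely about $t$ (it holds over any coefficient ring, in particular over $K(G)$), so the resulting series is rational over $K(G)$.

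The main obstacle is making sure the formal manipulations — in particular Weyl's integration formula and the passage from $M$ to $M(\C^d)$ — are compatible with carrying $K(G)$-valued, rather than $\Q$-valued, multiplicities, and that one ends up with a genuinely rational element of $K(G)\lbb t\rbb$ in the sense of the definition preceding Lemma~\ref{lem:ratl} (i.e. with a common denominator $q \in K(G)[t]$, $q(0)=1$), not merely rational after some ring extension. The subtle point is that $K(G)$ is not a field, so one cannot blithely invert denominators; however, the denominators produced by the argument are the explicit factors $(1-at)$ with $1\le a\le d$ coming from Lemma~\ref{sumlem}, each of which satisfies $q(0)=1$ over $\Z \subset K(G)$, so no extension of the coefficient ring is needed and Lemma~\ref{lem:ratl} is not even required here. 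One should also note that the auxiliary group $\Gamma$ being finite is used precisely to invoke Proposition~\ref{tc:inv}, and that $G$ being connected reductive ensures the decomposition into Schur functors times $G$-multiplicity spaces behaves well and that $K(G)$ has the ring structure used above.
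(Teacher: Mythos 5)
There is a genuine gap, and it sits exactly where you declare the problem solved. You treat $G\times\Gamma$ as if it acted only on the multiplicity spaces of $M$, but in the theorem (and in its actual use in the proof of Theorem~\ref{hilbert3}) the group acts on the algebra $A$ itself, i.e.\ on the generating space $U$ of $\Sym(U\langle 1\rangle)$. Consequently the equivariant Hilbert series of the free algebra is not $\prod_i (1-\alpha_i t)^{-n}$: after diagonalizing the action on $U$ one gets $H^*_{A(\C^d)} = \prod_{i,j}(1-\alpha_i\psi_j t)^{-1}$, where the $\psi_j$ are the characters of the group on $U$. So the denominator in your claimed form of $H^*_{(M^\Gamma)(\C^d),T\times G}$ is wrong, and your final assertion --- that the only denominators arising are $1-at$ with $1\le a\le d$ an integer, so that Lemma~\ref{sumlem} applies ``over any coefficient ring'' and Lemma~\ref{lem:ratl} is never needed --- collapses. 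The sums that actually arise are of the form $\sum_k \psi_i^k\, C_{k+e}\, t^{\vert k\vert}$, whose denominators are $1-at$ with $a$ a \emph{sum of characters}, a nonconstant element of $K(\Gamma\times G)$; handling these requires a new lemma (this is the last lemma in the paper's proof), not Lemma~\ref{sumlem} with enlarged coefficients. Moreover, to diagonalize the action on $U$ at all one must first reduce $G$ to a maximal torus (via $K(G)=K(H)^W$) and $\Gamma$ to a cyclic subgroup --- reductions you skip, and both of which invoke precisely the Lemma~\ref{lem:ratl} you claim is unnecessary.

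A second, related defect: you propose to take a finite minimal free resolution of $(M^\Gamma)(\C^d)$ over ``the polynomial ring $A^\Gamma(\C^d)$,'' but $A^\Gamma(\C^d)=\Sym(U\otimes\C^d)^\Gamma$ is an invariant ring, not a polynomial ring; it generally has infinite global dimension and its equivariant Hilbert series has no simple product form, so this step does not go through. The paper sidesteps this by never resolving over the invariant ring: it proves rationality of the full series $H^*_{M,\Gamma\times G}$ in $K(\Gamma\times G)\lbb t\rbb$, resolving $M(\C^d)$ over the honest polynomial ring $A(\C^d)$, and only then recovers $H^*_{M^\Gamma,G}$ by decomposing $K(\Gamma)$ into class-function idempotents and averaging --- another place where Lemma~\ref{lem:ratl} enters. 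Your Weyl-integration step (the equivariant analogue of Lemma~\ref{hilbertlem}) is fine; the missing content of the theorem is everything that deals with the nontrivial action of $G\times\Gamma$ on $A$.
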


Most likely, this proposition could be generalized by replacing $\Gamma \subset G \times \Gamma$ with an arbitrary
normal reductive subgroup of an arbitrary reductive group.  We do not need this more general result, and so only prove
the special one, which allows for some simplifications in the proof.  With some book-keeping, one can also show that
the denominator of $H^*_{M^{\Gamma}, G}$ has a particular form, but we do not do this.  The theorem implies a certain
result about $H_{M^{\Gamma}, G}$, but one that is not so elegant:  exponentials of algebraic functions (roots of
polynomials over $K(G)$) appear.  We prove this proposition following the same plan as the proof of last one, after
some preliminary reductions.

First, we observe that it suffices to prove that $H^*_{M, \Gamma \times G}$, an element of $K(\Gamma \times G) \lbb t
\rbb$, is a rational function.  To see this, assume we have an equation
$(1+tq)H^*_{M, \Gamma \times G}=p$ with $p$ and $q$ in $K(\Gamma \times G) \lbb t \rbb$.  Now, observe that $K(\Gamma)$
can be thought of as the ring of class functions on $\Gamma$ (at least, after an extension of scalars, which does
not affect rationality by Lemma~\ref{lem:ratl}) and so we may write
\begin{displaymath}
H^*_{M, \Gamma \times G}=\sum H_i \delta_i, \qquad q=\sum q_i \delta_i, \qquad p=\sum p_i \delta_i
\end{displaymath}
where $H_i$, $q_i$ and $t_i$ belong to $K(G) \lbb t \rbb$ and the $\delta_i$ are characteristic functions of
conjugacy classes in $\Gamma$.  Since the $\delta_i$ are orthogonal idempotents, we have
\begin{displaymath}
(1+tq) H^*_{M, \Gamma \times G}=\sum (1+tq_i) H_i \delta_i = \sum p_i \delta_i
\end{displaymath}
and so $(1+tq_i)H_i=p_i$ holds for each $i$.  Thus each $H_i$ is a rational function in $K(G) \lbb t \rbb$.  Since
$H^*_{M^{\Gamma}, G}=(\# \Gamma )^{-1} \sum H_i$, it follows that it too is a rational function.  This
establishes our claim.

Now, we can think of the coefficients of $H^*_{M, \Gamma \times G}$ as class functions on $\Gamma$.  The series
$H^*_{M, \Gamma \times G}$ defines a rational element of $K(\Gamma \times G) \lbb t \rbb$ if and only if the
series $H^*_{M, \gamma, G}$ obtained by evaluating on the element $\gamma \in \Gamma$ is a rational element of
$K(G) \lbb t \rbb$, for each $\gamma$.  Let $\Gamma'$ be the cyclic subgroup generated by some $\gamma \in \Gamma$.
By the same reasoning, $H^*_{M, \gamma, G}$ will be a rational element of $K(G) \lbb t \rbb$ if $H^*_{M, \Gamma'
\times G}$ is a rational element of $K(\Gamma' \times G) \lbb t \rbb$.  Thus it suffices to show that for each cyclic
subgroup $\Gamma'$ of $\Gamma$, the series $H^*_{M \Gamma' \times G}$ is a rational element of $K(\Gamma' \times G)
\lbb t \rbb$.  In other words, we may assume from the outset that $\Gamma$ is cyclic.

We make another reduction.  We have $K(G)=K(H)^W$ where $H$ is a maximal torus in $G$ and $W$ is its Weyl group.
By Lemma~\ref{lem:ratl}, a power series with coefficients in $K(G)$ is rational if and only if it is so when
regarded with $K(H)$ coefficients.  Thus it suffices to show that $H^*_{M, \Gamma \times H}$ is rational.  In other
words, we may as well assume from the outset that $G$ is a torus.

Finally, as in the proof of the Theorem~\ref{hilbert}, we may assume $A=\Sym(U\langle 1 \rangle)$, where $U$ is a
finite dimensional representation of $\Gamma \times G$.  Since $\Gamma \times G$ is a commutative reductive group, we
can write $U=\bigoplus_{j=1}^n \C \psi_j$ where the $\psi_j$ are characters of $\Gamma \times G$.  Pick $d$ large
compared to the number of rows appearing in $M$ and let $T \subset \GL(d)$ be the diagonal torus.  We now have:

\begin{lemma}
We have
\begin{displaymath}
H^*_{M(\C^d), \Gamma \times G \times T}(t; \alpha)=\frac{p(t; \alpha)}{\prod_{ij}
(1-\alpha_i \psi_j t)}.
\end{displaymath}
Here $p$ belongs to $K(\Gamma \times G \times T)[t]=K(\Gamma \times G)[t, \alpha_i]$.  The product is taken over
$1 \le i \le d$ and $1 \le j \le \dim{U}$.
\end{lemma}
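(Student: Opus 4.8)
The plan is to imitate, in the equivariant setting, the argument already used for the non-equivariant version of this lemma (the one giving $H^*_{M(\C^d),T}(t;\alpha)=p(t;\alpha)/\prod_i(1-\alpha_i t)^n$). The key point is the same: $M(\C^d)$ is a finitely generated module over the \emph{polynomial} ring $A(\C^d)=\Sym(U\otimes\C^d)$, which has finite global dimension, so one resolves $M(\C^d)$ by finitely many free modules and reduces the Hilbert series computation to that of $A(\C^d)$ itself, the new feature being only that one must carry the $\Gamma\times G$-action along.

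First I would take the minimal free resolution $A(\C^d)\otimes E_\bullet\to M(\C^d)$ with $E_i=\Tor_i^{A(\C^d)}(M(\C^d),\C)$, a finite-dimensional graded vector space; since $A(\C^d)$ is a polynomial ring this resolution is finite. The groups $\Gamma$, $G$ and $\GL(d)$ all act on $M(\C^d)$ --- the first two because $M$ is a $\Gamma\times G$-equivariant object, the last by functoriality of $M$ in its vector-space argument --- and these commuting actions preserve $A(\C^d)$, so by functoriality of $\Tor$ each $E_i$ is a finite-dimensional representation of $\Gamma\times G\times\GL(d)$ and the resolution can be taken equivariant, exactly as in the earlier proof. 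Restricting to $T\subset\GL(d)$ and taking the Euler characteristic of Hilbert series gives
\begin{displaymath}
H^*_{M(\C^d),\Gamma\times G\times T}(t;\alpha)=\Big(\sum_i(-1)^i\chi_{E_i}(t;\alpha)\Big)\cdot H^*_{A(\C^d),\Gamma\times G\times T}(t;\alpha),
\end{displaymath}
where $\chi_{E_i}(t;\alpha)=\sum_e[E_{i,e}]\,t^e\in K(\Gamma\times G\times T)[t]$. Since $M$ is a subquotient of $A\otimes S$ for a finite length $S$, hence a polynomial functor, $M(\C^d)$ and each $E_i$ are polynomial representations of $\GL(d)$, so $\chi_{E_i}$ lies in $K(\Gamma\times G)[t,\alpha_i]$; thus $p:=\sum_i(-1)^i\chi_{E_i}$ belongs to $K(\Gamma\times G\times T)[t]=K(\Gamma\times G)[t,\alpha_i]$, as claimed.

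It then remains to compute $H^*_{A(\C^d),\Gamma\times G\times T}$. Writing $U=\bigoplus_{j=1}^n\C\psi_j$ as a representation of $\Gamma\times G$, we have $U\otimes\C^d=\bigoplus_{i=1}^d\bigoplus_{j=1}^n\C\,\alpha_i\psi_j$ as a representation of $\Gamma\times G\times T$ --- a direct sum of one-dimensional pieces with characters $\alpha_i\psi_j$. Since $\Sym$ sends direct sums to tensor products, and $\Sym$ of a line with character $\chi$ has equivariant Hilbert series $\sum_{k\ge0}\chi^k t^k=(1-\chi t)^{-1}$, this yields
\begin{displaymath}
H^*_{A(\C^d),\Gamma\times G\times T}(t;\alpha)=\prod_{i=1}^d\prod_{j=1}^n\frac{1}{1-\alpha_i\psi_j t},
\end{displaymath}
and substituting into the previous display gives the stated formula. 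The only step needing care is the equivariance of the minimal free resolution with respect to the full product group $\Gamma\times G\times\GL(d)$, but this is routine --- it follows from the functoriality of the minimal resolution together with linear reductivity in characteristic zero, just as in the non-equivariant case treated above --- so I do not anticipate any serious obstacle.
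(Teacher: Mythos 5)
Your proposal is correct and follows essentially the same route as the paper: reduce to the case $M=A$ via the (equivariant) minimal free resolution with terms $A(\C^d)\otimes E_i$, $E_i=\Tor_i^{A(\C^d)}(M(\C^d),\C)$, and then compute $H^*_{A(\C^d),\Gamma\times G\times T}$ by decomposing $U\otimes\C^d$ into lines with characters $\alpha_i\psi_j$, giving the product $\prod_{ij}(1-\alpha_i\psi_j t)^{-1}$. Your added observation that polynomiality of the $E_i$ as $\GL(d)$-representations places the numerator in $K(\Gamma\times G)[t,\alpha_i]$ is a welcome (if implicit in the paper) detail, not a deviation.
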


\begin{proof}
As before, we can reduce to the case $M=A$ by considering the minimal resolution of $M$.  An easy computation, similar
to the previous one, gives $H^*_{A(\C^d), \Gamma \times G \times T}=\prod_{ij} (1-\alpha_i \psi_j t)^{-1}$.
\end{proof}

Lemma~\ref{hilbertlem} carries over exactly to the present situation.  Combining it with the previous lemma yields
\begin{equation}
\label{eq3}
H^*_{M, \Gamma \times G}(t)=\int_T \frac{p(t; \alpha)}{\prod_{ij} (1-\alpha_i \psi_j t)}
\frac{1}{1-(\sum \ol{\alpha}_i)} d\alpha.
\end{equation}
For $i$ fixed, we have
\begin{displaymath}
\frac{1}{\prod_j (1-\alpha_i \psi_j t)}=\sum \alpha_i^{\vert a \vert} \psi^a t^{\vert a \vert}
\end{displaymath}
where the sum is taken over $a \in \Z_{\ge 0}^n$, with $n=\dim{U}$, $\vert a \vert$ is defined as $a_1+\cdots+a_n$ and
$\psi^a$ is defined as $\psi_1^{a_1} \cdots \psi_n^{a_n}$.  Define
\begin{displaymath}
\qbinom{k}{n}{\psi}=\sum_{\vert a \vert=k} \psi^a,
\end{displaymath}
where $a$ belongs to $\Z_{\ge 0}^n$, so that
\begin{displaymath}
\frac{1}{\prod_j (1-\alpha_i \psi_j t)}=\sum_{k=0}^{\infty} \qbinom{k}{n}{\psi} \alpha_i^k t^k.
\end{displaymath}
With this notation in hand, we now expand the integrand of \eqref{eq3} into a series.  We find
\begin{displaymath}
H^*_{M, \Gamma \times G}(t)=\int_T \left[ \sum_{k, \ell} \qbinom{k}{n}{\psi} \alpha^k \left(
\sum \ol{\alpha_i} \right)^{\ell} p(t; \alpha) t^{\vert k \vert} \right] d\alpha.
\end{displaymath}
The sum is taken over $k \in \Z_{\ge 0}^d$ and $\ell \in \Z_{\ge 0}$ and our notation is as before; we mention
\begin{displaymath}
\qbinom{k}{n}{\psi}=\qbinom{k_1}{n}{\psi} \cdots \qbinom{k_d}{n}{\psi}.
\end{displaymath}
We must show that the previous equation is rational in $t$.  By linearity, it suffices to treat the case where
$p(t; \alpha)$ is of the form $x t^{e_0} \alpha_1^{e_1} \cdots \alpha_d^{e_d}$ where $x$ belongs to $K(\Gamma \times G)$
and the $e_i$ are integers.  Of course, $xt^{e_0}$ pulls out of the integral, and can thus be safely ignored.
Hence, it suffices to consider the case where $p$ is $\alpha^e$, with $e=(e_1, \ldots, e_d)$.  As before, the
$(k, \ell)$ term only contributes if $\ell=\vert k+e \vert$ and then only one term of $(\sum \ol{\alpha}_i)^{\ell}$
contributes.  The previous integral thus evaluates to:
\begin{displaymath}
\sum_k \qbinom{k}{n}{\psi} C_{k+e} t^{\vert k \vert}.
\end{displaymath}
Now, for a single integer $k$ the expression $\qbinom{k}{n}{\psi}$ is of the form $\sum_i a_i \psi_i^k$ where $a_i$
is a rational function of the $\psi$.  It follows that for $k \in \Z_{\ge 0}^d$ the expression $\qbinom{k}{n}{\psi}$
is of the form $\sum_i a_i \psi_i^k$ where the sum is taken over tuples $i \in \{1, \ldots, n\}^d$ and $\psi_i^k$
denotes $\psi_{i_1}^{k_1} \cdots \psi_{i_d}^{k_d}$; again the $a_i$ are rational functions in the $\psi$.  It thus
suffices to show that for each such tuple $i$, the expression
\begin{displaymath}
\sum_k \psi_i^k C_{k+e} t^{\vert k \vert}
\end{displaymath}
is rational in $t$.  This is accomplished in the following lemma, which completes the proof of
Theorem~\ref{hilbert2}:

\begin{lemma}
Keep the above notation and let $p$ be a polynomial.  Then
\begin{displaymath}
\sum p(k) \psi_i^k C_{k+e} t^{\vert k \vert}
\end{displaymath}
is a rational function of $t$.  (The sum is taken over $k \in \Z_{\ge 0}^d$.) Furthermore, the only denominators which
appear are of the form $1-at$ where $a$ is a sum of at most $d$ of the $\psi$'s.
\end{lemma}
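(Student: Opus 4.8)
The plan is to imitate the proof of Lemma~\ref{sumlem} almost verbatim, carrying the extra weight $\psi_i^k=\psi_{i_1}^{k_1}\cdots\psi_{i_d}^{k_d}$ along unchanged: since the $\psi$'s are constants with respect to $t$ and play no role in any of the multinomial recursions, they survive every reduction step, and the only genuine difference appears at the very end. I argue by induction on $d$, proving the statement simultaneously for all polynomials $p$, all $e\in\Z^d$ and all tuples $i$; the passage to arbitrary $p$ is forced, since the coordinate-peeling step below produces polynomial coefficients even when one starts from $p=1$. (The base case $d=1$, where $C_k\equiv 1$ so the sum is $\sum_{k_1+e_1\ge 0}p(k_1)(\psi_{i_1}t)^{k_1}$, is immediate.)

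First I reduce $p$ to a polynomial in $|k|$. By linearity assume $p$ is a monomial; if $p$ is non-constant write $p=k_jp'$ and use $(k_j+e_j)C_{k+e}=|k+e|\,C_{k+e'}$, where $e'$ decrements the $j$-th entry of $e$, together with $|k+e|=|k|+|e|$, exactly as in Lemma~\ref{sumlem}; the weight $\psi_i^k$ is untouched. Iterating reduces to $p(k)=|k|^m$, and then applying $(t\tfrac{d}{dt})^m$ reduces to $m=0$, rationality and the shape of the denominator being preserved by this operator. So it suffices to handle $\sum_{k\ge 0}\psi_i^k C_{k+e}t^{|k|}$. Re-indexing by $k+e$ and using that $C$ vanishes on tuples with a negative entry, this equals $\psi_i^{-e}t^{-|e|}\sum_{k\ge f}\psi_i^k C_k t^{|k|}$ with $f=\max(e,0)$ taken componentwise; the prefactor is a unit times a power of $t$, so I may assume $e=f\ge 0$.

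Now I peel off coordinates one at a time. Write $k=(k_1,\tilde k)$, $e=(e_1,\tilde e)$, $i=(i_1,\tilde i)$, and split $\{k_1\ge e_1\}$ into $\{k_1\ge 0\}$ minus $\{0\le k_1<e_1\}$. The main term has $e_1$ replaced by $0$, while each of the $e_1$ correction terms is $\psi_{i_1}^{k_1}t^{k_1}$ times $\sum_{\tilde k\ge\tilde e}\binom{k_1+|\tilde k|}{k_1}\psi_{\tilde i}^{\tilde k}C_{\tilde k}t^{|\tilde k|}$, using $C_{(k_1,\tilde k)}=\binom{k_1+|\tilde k|}{k_1}C_{\tilde k}$. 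Since $\binom{k_1+|\tilde k|}{k_1}$ is a polynomial in $\tilde k$, this correction term is precisely an instance of the lemma in $d-1$ variables (polynomial $\binom{k_1+|\,\cdot\,|}{k_1}$, shift $\tilde e$, tuple $\tilde i$), hence rational by induction with denominators $1-a't$ for $a'$ a sum of at most $d-1$ of the $\psi$'s. Iterating over the remaining coordinates of the main term, I am left with finitely many rational correction terms plus $\sum_{k\ge 0}\psi_i^k C_k t^{|k|}$, which the multinomial theorem evaluates:
\[
\sum_{k\ge 0}\psi_i^k C_k t^{|k|}=\sum_{m\ge 0}t^m\sum_{|k|=m}C_k\,\psi_{i_1}^{k_1}\cdots\psi_{i_d}^{k_d}=\sum_{m\ge 0}\big(\psi_{i_1}+\cdots+\psi_{i_d}\big)^m t^m=\frac{1}{1-(\psi_{i_1}+\cdots+\psi_{i_d})\,t}.
\]
This has the asserted form, with $a=\psi_{i_1}+\cdots+\psi_{i_d}$ a sum of $d$ (hence at most $d$) of the $\psi$'s; it is the sole place the computation departs from Lemma~\ref{sumlem}, where all weights are $1$ and one instead gets $1/(1-dt)$.

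The difficulty is entirely bookkeeping: one must be sure that (i) the inductive hypothesis allows arbitrary polynomial coefficients, as the peeling forces the weights $\binom{k_1+|\tilde k|}{k_1}$ to appear; (ii) after fixing a coordinate the residual $\psi$-weight is still a monomial indexed by a sub-tuple of $i$, so the correction terms really are instances of the lemma; and (iii) the change-of-variables prefactor $\psi_i^{-e}t^{-|e|}$ introduces no new $t$-denominators. None is serious, but care is needed to land the ``at most $d$ of the $\psi$'s'' bound exactly.
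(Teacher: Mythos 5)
Your argument is correct and is essentially the paper's own proof spelled out in more detail: reduce $p$ via the multinomial identity and $t\,\frac{d}{dt}$ exactly as in Lemma~\ref{sumlem}, shift by $e$ and absorb the unit prefactor, handle the boundary terms by induction on $d$ (which is why $p$ must be allowed to be a general polynomial), and evaluate the unrestricted sum by the multinomial theorem to get $(1-at)^{-1}$ with $a=\psi_{i_1}+\cdots+\psi_{i_d}$. The bookkeeping points you flag are the same ones the paper handles implicitly, so there is no substantive difference.
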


\begin{proof}
As in the proof of Lemma~\ref{sumlem}, we reduce to the case where $p=1$.  We then change $k$ to $k-e$ and pull
monomials out of the sum, to obtain an expression of the form
\begin{displaymath}
\sum_{k \ge e} \psi_i^k C_k t^{\vert k \vert}.
\end{displaymath}
The difference
\begin{displaymath}
\sum_{k \ge 0} \psi_i^k C_k t^{\vert k \vert} - \sum_{k \ge e} \psi_i^k C_k t^{\vert k \vert}
\end{displaymath}
is a finite sum of sums of the form considered in the lemma, but with a smaller value of $d$.  (The terms which
appear may no longer have $p=1$; this is the reason for including $p$ in the general form of the sum we consider.)
It thus suffices to show that the first sum above is rational in $t$.  We have
\begin{displaymath}
\sum_{\vert k \vert=n} \psi_i^k C_k=(\psi_{i_1}+\cdots+\psi_{i_d})^n=a^n
\end{displaymath}
by the multinomial theorem.  Thus the first sum in the previous expression is $(1-at)^{-1}$, which completes the
proof.
\end{proof}

\subsection{Hilbert series in $\Sym(\mc{S})$}

Let $M$ be an object of $\Sym(\mc{S})$.  We can regard $M$ as a sequence of equivariant functors $M_n:\Vec^n \to \Vec$.
Write
\begin{displaymath}
M_n(V_1, \ldots, V_n)=\bigoplus_{i \in I_n} \bS_{\lambda_{i, 1}}(V_1) \otimes \cdots \otimes \bS_{\lambda_{i, n}}(V_n)
\end{displaymath}
for some index set $I_n$ and partitions $\lambda_{i, j}$ (both depending on $n$).  We assume $I_n$ is finite
for each $n$.  Put
\begin{displaymath}
m_n^*=\sum_{i \in I_n} s_{\lambda_{i, 1}} \cdots s_{\lambda_{i, n}}, \qquad H^*_M(t)=\sum_{n=0}^{\infty} m_n^* t^n.
\end{displaymath}
We regard $m_n^*$ as an element of the polynomial ring $\Q[s_{\lambda}]$ and $H_M^*(t)$ as an element of the
power series ring $\Q[s_{\lambda}] \lbb t \rbb$.  The variable $t$ is basically superfluous, since the power of
$t$ can be obtained from the order of the polynomial $m_n^*$.  When $t$ is omitted (or set to 1), $H_M^*$
agrees with $[M]^*$.  One can also define $H_M(t)$, which is analogous to $[M]$, by replacing $m_n^*$ with
$m_n=\tfrac{1}{n!} m_n^*$.  Our main result concerning these series is the following:

\begin{theorem}
\label{hilbert3}
Let $A$ be an algebra in $\Sym(\mc{S})$ finitely generated in order 1 on which a finite group $\Gamma$ acts and let
$M$ be a finitely generated $A$-module with a compatible action of $\Gamma$.  Then $H^*_{M^{\Gamma}}$ is a rational
function of the $s_{\lambda}$.
\end{theorem}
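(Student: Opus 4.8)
The plan is to reduce the theorem to Theorem~\ref{hilbert2} by means of the tensor functor $i^* \colon \Sym(\mc{S}) \to \Sym(\Vec)$ used in the proof of Theorem~\ref{wnnoeth}, namely restriction along $L \mapsto U_L$, the constant family with value $U = \C^d$. First I would make the standard reductions. Since $\Gamma$ is finite, the category of $\Gamma$-objects in $\mc{S}$ is again semisimple, so $A$ may be written as a $\Gamma$-equivariant quotient of $\Sym(S)$ for some finite length object $S$ of $\mc{S}$ (regarded in order $1$) carrying a $\Gamma$-action; as $M$ is still a finitely generated module over $\Sym(S)$, we may assume $A = \Sym(S)$.

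Next I record a finiteness feature peculiar to $\Sym(\mc{S})$: because the product there is formal (the Littlewood--Richardson rule does not enter), the only partitions occurring in $A = \Sym(S)$ are those occurring in $S$, and likewise the only partitions occurring in $A \otimes F$, for $F$ of finite length, are those occurring in $S$ or in $F$. Hence only finitely many partitions $\lambda$ occur in $M$, and in $M^\Gamma$, all with at most some number $d_0$ of rows, so that $H^*_{M^\Gamma}$ already lies in the polynomial ring $\Q[s_\lambda \colon \lambda \in \Lambda_0] \lbb t \rbb$ for a finite set $\Lambda_0$. Now fix $d \ge d_0$ and apply $i^*$. As $i^*$ is a tensor functor, $i^* A = \Sym(S(\C^d) \langle 1 \rangle)$ is a twisted commutative algebra finitely generated in order $1$; it carries an action of $\GL(d) \times \Gamma$ (with $\GL(d)$ acting through $\C^d$), the module $i^* M$ is finitely generated over it with a compatible action, and $i^*(M^\Gamma) = (i^*M)^\Gamma$ since $\Gamma$-invariants of a semisimple functor are taken pointwise. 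By Theorem~\ref{hilbert2}, applied with $G = \GL(d)$, the $\GL(d)$-equivariant Hilbert series $H^*_{(i^*M)^\Gamma, \GL(d)}$ is a rational element of $K(\GL(d)) \lbb t \rbb$. Unwinding the definitions of $i^*$ and of the Hilbert series, this element is exactly the image of $H^*_{M^\Gamma}$ under the ring map $\sigma_d \colon \Q[s_\lambda] \to K(\GL(d))$ carrying $s_\lambda$ to the class of $\bS_\lambda(\C^d)$.

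The remaining step, which I expect to be the main obstacle, is to descend rationality from $K(\GL(d)) \lbb t \rbb$ back to the free polynomial ring $\Q[s_\lambda] \lbb t \rbb$. The map $\sigma_d$ is surjective (Schur polynomials in $d$ variables generate $K(\GL(d))$) but never injective, since Littlewood--Richardson relations such as $s_{(1)}^2 = s_{(2)} + s_{(1,1)}$ lie in its kernel for all $d$; so one cannot simply pull a relation back. My plan here is to inspect the denominator produced by the proof of Theorem~\ref{hilbert2}: after that proof reduces $\GL(d)$ to its maximal torus, the denominator is a product of factors $1 - at$ in which each $a$ is a sum of weights of $S(\C^d)$, and the full product can be taken Weyl-invariant, hence with coefficients in $K(\GL(d))$; being in the image of $\sigma_d$, it equals $\sigma_d(q)$ for some $q \in \Q[s_\lambda][t]$ with $q(0) = 1$, whence $\sigma_d(q \cdot H^*_{M^\Gamma})$ is a polynomial in $t$. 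What must still be shown is that $q \cdot H^*_{M^\Gamma}$ is itself a polynomial --- equivalently, that the coefficient polynomials $m^*_n$ of $H^*_{M^\Gamma}$ satisfy the linear recurrence encoded by $q$ as an identity in $\Q[s_\lambda]$, not merely after applying $\sigma_d$. I would attack this by carrying out the construction of $q$ and of the numerator uniformly for all $d \ge d_0$ and letting $d \to \infty$, arguing that the lift can be chosen so that no correction by Littlewood--Richardson relations persists; alternatively, and perhaps more robustly, one can sidestep the descent by rerunning the explicit summations of Section~\ref{s:hilbert} (the analogues of Lemmas~\ref{hilbertlem} and~\ref{sumlem}) with the formal variables $s_\lambda$ in place of torus characters throughout, so that the resulting closed form is visibly a rational function of the $s_\lambda$. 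Making one of these two routes go through is the crux of the argument.
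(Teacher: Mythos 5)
Your reductions are fine as far as they go: only finitely many partitions occur in $M^{\Gamma}$, the restriction $i^*$ along constant families is a tensor functor commuting with $\Gamma$-invariants, and Theorem~\ref{hilbert2} does show that $\sigma_d(H^*_{M^{\Gamma}})=H^*_{(i^*M)^{\Gamma},\GL(d)}$ is rational in $K(\GL(d))\lbb t\rbb$. But the descent step you defer is the entire content of the theorem, and neither of your proposed routes closes it. The kernel of $\sigma_d$ contains the Littlewood--Richardson relations (such as $s_{(1)}^2-s_{(2)}-s_{(1,1)}$) for \emph{every} $d$, so letting $d\to\infty$ does not shrink the obstruction: from $\sigma_d(q)\sigma_d(H^*_{M^{\Gamma}})=\sigma_d(p)$ you only learn that the coefficients of $q\,H^*_{M^{\Gamma}}-p$ lie in the LR ideal, and nothing in your construction forces a choice of lifts with $q\,H^*_{M^{\Gamma}}-p=0$; ``no correction by Littlewood--Richardson relations persists'' is exactly the statement to be proved, not a route to it. The alternative of rerunning the summations of \S\ref{s:hilbert} with the formal variables $s_{\lambda}$ in place of torus characters cannot work as stated either: those computations rest on Weyl integration and character orthogonality, i.e.\ on the fact that characters multiply as characters, whereas the product in $\Sym(\mc{S})$ is formal and the LR rule never enters --- there is no analogue of that machinery in $\Q[s_{\lambda}]$. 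This is precisely the failure of the ``obvious'' single-evaluation approach that the paper flags before Lemma~\ref{hilbert3-1}.

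The paper's proof gets around this by evaluating on $r$ copies of a fixed $U$ rather than on one large $U$. With $G=\SL(U)$ (so $K(G)$ is a polynomial ring), it considers the ring map $\phi([S])=\sum_{i=1}^r \{[S(U)]\}_i$ into $K(G)^{\otimes r}$, lifted to the tensor functor $\Phi:\Sym(\mc{S})\to\Sym(\Vec)$ with $\Phi(F)(L)=\bigoplus_{L=L_1\amalg\cdots\amalg L_r}F((U_1)_{L_1}\amalg\cdots\amalg(U_r)_{L_r})$. Because cross terms with $i\ne j$ land in distinct tensor factors, products of the $s_{\lambda}$ are not collapsed by LR relations, and Proposition~\ref{hilbert3-4} (proved via Lemmas~\ref{hilbert3-1}--\ref{hilbert3-3}, a surjectivity-of-sums argument together with a determinant/UFD argument) shows that for $r\gg 0$ this map is injective on the relevant subring and, crucially, \emph{detects rationality}: $f$ is rational if and only if $\phi(f)$ is. Since $\Phi(A)$ is a twisted commutative algebra finitely generated in order 1 and $\Phi(M)$ is finitely generated over it, Theorem~\ref{hilbert2} applied to $\Phi(M)^{\Gamma}$ with group $G^r$ gives rationality of $\phi(H^*_{M^{\Gamma}})$, and Proposition~\ref{hilbert3-4}(c) transports it back. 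To salvage your outline you would need the analogue of Proposition~\ref{hilbert3-4}(c) for your single map $\sigma_d$, which is not available (it is not even injective); some form of the multi-copy construction is what supplies the missing step.
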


As with Theorem~\ref{hilbert2}, this result does not translate to an elegant statement about $H_{M^{\Gamma}}$.  We now
explain the basic strategy of our proof, in the case where $\Gamma$ is trivial.  Let $M$ be a finitely generated
$A$-module.  We would like to relate $H^*_M$ to the Hilbert series of an object of $\Sym(\Vec)$, so that we can
apply the results we have established in that case.  The most obvious way to obtain an object of $\Sym(\Vec)$ is to
pick a vector space $U$, let $i:\Cfs \to \Vec^f$ be the functor assigning to $L$ the constant family $U_L$ and then
consider $i^*M$.  Unfortunately, $H^*_M$ cannot be recovered from $H^*_{i^*M}$, or even $H^*_{i^*M, \GL(U)}$.  Thus the
most obvious approach fails.  However, a slight modification works:  instead of picking just one vector space $U$ we
pick finitely many $U_1, \ldots, U_r$ and build from $M$ an object of $\Sym(\Vec)$ with an action of $\GL(U_1) \times
\cdots \times \GL(U_r)$.  It turns out that, due to the form of $M$, we can take $r$ large enough so that information
is not lost --- this is essentially the content of Proposition~\ref{hilbert3-4} below.  Before proving that result,
we need some lemmas, the first two of which are left to the reader.

\begin{lemma}
\label{hilbert3-1}
Let $f:\C^n \to \C^m$ be a polynomial map whose components are homogeneous of positive degree and whose image is not
contained in any linear subspace of $\C^m$.  Then for $r \gg 0$ any element of $\C^m$ can be expressed as a sum of $r$
elements of the image of $f$.
\end{lemma}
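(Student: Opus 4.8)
The plan is to work with the image $Y := f(\C^n) \subseteq \C^m$ and its iterated sumsets $rY := \{y_1 + \cdots + y_r : y_i \in Y\}$; the statement to be proved is precisely that $rY = \C^m$ for all sufficiently large $r$. Since every component of $f$ is homogeneous of positive degree we have $f(0) = 0$, so $0 \in Y$, and hence $rY \subseteq (r+1)Y$. Each $rY$ is the image of the morphism $(\C^n)^r \to \C^m$ sending $(x_1, \dots, x_r)$ to $f(x_1) + \cdots + f(x_r)$, so by Chevalley's theorem it is constructible (and it is irreducible, being the image of an irreducible variety). Writing $Z_r$ for the Zariski closure of $rY$, the $Z_r$ form an ascending chain of irreducible closed subvarieties of $\C^m$, which therefore stabilizes; let $Z_\infty$ denote its eventual value, reached for $r \geq r_0$.

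The first real step is to show that $Z_\infty$ is a linear subspace of $\C^m$. I would first check the inclusion $Z_r + Z_r \subseteq Z_{2r}$: for $z \in rY$, translation by $z$ is an automorphism of $\C^m$, so $z + Z_r = \overline{z + rY} \subseteq \overline{(2r)Y} = Z_{2r}$; consequently, for each fixed $w \in Z_r$ the morphism ``add $w$'' carries $rY$ into the closed set $Z_{2r}$, hence carries its closure $Z_r$ into $Z_{2r}$ as well, and letting $w$ range over $Z_r$ gives $Z_r + Z_r \subseteq Z_{2r}$. Taking $r \geq r_0$ yields $Z_\infty + Z_\infty \subseteq Z_\infty$, and of course $0 \in Z_\infty$. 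Next, for any $a \in Z_\infty$ all the multiples $na$ with $n \in \Z_{\geq 0}$ lie in $Z_\infty$, so $Z_\infty$ meets the line $\C a$ in an infinite Zariski-closed subset and hence contains all of $\C a$; thus $Z_\infty$ is also stable under scalar multiplication. A subset of $\C^m$ containing $0$ and closed under both addition and scalar multiplication is a linear subspace. Since $Z_\infty \supseteq Z_1 \supseteq Y$ and, by hypothesis, $Y$ is contained in no proper linear subspace of $\C^m$, we conclude $Z_\infty = \C^m$.

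It remains to upgrade Zariski density to actual surjectivity. Fix $r \geq r_0$, so $rY$ is a dense constructible subset of $\C^m$; a dense constructible subset of an irreducible variety contains a nonempty open subset $U$, which is dense because $\C^m$ is irreducible. For any $v \in \C^m$ the open sets $U$ and $v - U$ are both nonempty, hence meet (irreducibility again), so $v = u + (v - u)$ with $u, v - u \in U$; thus $U + U = \C^m$. Therefore $(2r)Y \supseteq U + U = \C^m$, and by the monotonicity noted at the outset $rY = \C^m$ for every $r \geq 2r_0$, which is the assertion.

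I expect the middle paragraph --- identifying $Z_\infty$ with a linear subspace --- to be the crux, since it requires passing carefully between the constructible sumsets and their closures while transporting the two ``group-like'' closure properties (under addition and under scalar multiplication) to $Z_\infty$; once that is in hand, the passage from density to genuine surjectivity is a short formal argument using Chevalley's theorem and the irreducibility of affine space. A minor point to be careful about is that the hypothesis ``not contained in any linear subspace'' must be read as not contained in any proper \emph{linear} (i.e., origin-containing) subspace, which is exactly what is available here because $0 \in Y$.
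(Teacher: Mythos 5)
The paper offers no proof of this lemma to compare against --- it is one of the two lemmas explicitly ``left to the reader'' --- so your proposal stands on its own, and judged on its own it is correct and complete. The sumset-closure strategy (show the stabilized closure $Z_\infty$ of the sumsets $rY$ is a linear subspace, conclude $Z_\infty=\C^m$ from the spanning hypothesis, then upgrade Zariski density to genuine surjectivity via Chevalley and the observation $U+U=\C^m$ for a dense open $U$) is the natural route, and you handle the two delicate points correctly: transporting additivity from the constructible sets $rY$ to their closures by translating one variable at a time, and reading ``linear subspace'' as a \emph{proper} subspace through the origin, which is exactly how the lemma is used in the proof of Proposition~\ref{hilbert3-4}, where the image spans $V^*$ and contains $0$. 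One justification worth making explicit: the stabilization of the chain $Z_1\subseteq Z_2\subseteq\cdots$ does not follow from Noetherianity of $\C^m$ alone, since closed subsets satisfy the \emph{descending}, not ascending, chain condition; it is the irreducibility you noted, combined with the finite dimension of $\C^m$, that does the work, because a strict inclusion of irreducible closed subvarieties forces a strict increase of dimension, and there can be at most $m$ such jumps. It is also worth observing that the hypothesis that the components of $f$ are homogeneous of positive degree enters only through $f(0)=0$, which gives $0\in Y$ and hence the nesting $rY\subseteq(r+1)Y$; your argument needs nothing more from it.
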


\begin{lemma}
\label{hilbert3-2}
Let $K$ be a field and let $(f_i)$ be a sequence of elements in $K$.  Assume that there exists $m>0$ such that
for all $k_1, \ldots, k_m$ sufficiently large, the $m \times m$ matrix $(f_{k_i-j})$ has determinant zero.  Then
$\sum_{i \ge 0} f_i t^i$ can be expressed in the form $a/b$ where $a$ and $b$ belong to $K[t]$ and $\deg{b} \le m-1$.
\end{lemma}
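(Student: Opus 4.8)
The plan is to convert the statement into a statement about linear recurrences satisfied by the sequence $(f_i)$ and then to read off such a recurrence from the vanishing‑determinant hypothesis by an elementary rank count. First I would record the standard dictionary: $\sum_{i\ge 0} f_i t^i = a(t)/b(t)$ with $b\in K[t]$, $b(0)=1$ and $\deg b\le m-1$ if and only if there are scalars $b_1,\dots,b_{m-1}\in K$ and an integer $N$ with $f_n = b_1 f_{n-1} + \cdots + b_{m-1} f_{n-m+1}$ for all $n\ge N$. This is immediate: the coefficient of $t^n$ in $b(t)\sum_{i\ge 0} f_i t^i$ is $f_n - \sum_{\ell\ge 1} b_\ell f_{n-\ell}$, which vanishes once $n$ is large, so that product is a polynomial $a(t)$. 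Hence it suffices to produce, for all large $n$, a recurrence of order $\le m-1$ whose leading coefficient (on $f_n$) is $1$.

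Next I would unpack the hypothesis. Fix $N$ large enough that the determinant condition applies whenever all $k_i\ge N$ (and all indices occurring are nonnegative). For $k\ge N$ put $v_k=(f_{k-1},f_{k-2},\dots,f_{k-m})\in K^m$; the precise range of the column index $j$ is immaterial, and equal $k_i$ only make the determinant trivially zero, so we may think of the $k_i$ as distinct. The hypothesis says exactly that any $m$ of the vectors $v_k$, arranged as rows, form a singular $m\times m$ matrix, i.e.\ are linearly dependent. Therefore $V=\operatorname{span}\{v_k : k\ge N\}$ has dimension at most $m-1$, so there is a nonzero $(c_1,\dots,c_m)\in K^m$ annihilating $V$, i.e.\ $\sum_{j=1}^m c_j f_{k-j}=0$ for all $k\ge N$. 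Let $j_0$ be the least index with $c_{j_0}\ne 0$; since $j_0\ge 1$, dividing by $c_{j_0}$ and setting $n=k-j_0$ yields $f_n=-\sum_{\ell=1}^{m-j_0}(c_{\ell+j_0}/c_{j_0})\,f_{n-\ell}$ for all $n\ge N-j_0$, a recurrence of order $m-j_0\le m-1$ with leading coefficient $1$. Combined with the first paragraph, this gives the claimed expression $a/b$ with $\deg b\le m-1$.

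I do not expect a genuinely hard step here; the only things needing a little care are bookkeeping. The first is that the linear relation extracted from the rank count need not express $f_k$ itself, since its leading coefficient $c_1$ may be zero — one must locate the lowest nonzero coefficient and shift the index, and this is precisely where the bound $m-1$ (rather than $m$) enters, as a nontrivial relation among $m$ consecutive terms always lets one solve for its top term. The second is that the recurrence holds only for $n$ beyond a threshold, so $b(t)\sum f_i t^i$ is a polynomial of possibly large degree rather than of degree $<m-1$; but the lemma constrains only $\deg b$ and places no restriction on the numerator, so this transient is harmless.
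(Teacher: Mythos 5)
Your proof is correct. The paper explicitly leaves this lemma (and Lemma~\ref{hilbert3-1}) to the reader, so there is no argument of the author's to compare against; what you give is the expected one, essentially Kronecker's criterion for rationality: the vanishing of all large $m\times m$ minors of the generalized Hankel matrix $(f_{k-j})_{k\ge N,\,1\le j\le m}$ forces its rank to be at most $m-1$, hence a single relation $\sum_{j=1}^m c_j f_{k-j}=0$ valid for all $k\ge N$, and the shift to the lowest nonzero coefficient $c_{j_0}$ is exactly the point that yields a recurrence of order at most $m-1$ (hence $\deg b\le m-1$) rather than $m$. The two bookkeeping points you flag --- that repeated $k_i$ carry no information and that the recurrence only holds past a threshold, so the numerator $a$ may have large degree --- are indeed the only delicate spots, and you handle both correctly; note also that this is consistent with how the lemma is used in the proof of Lemma~\ref{hilbert3-3}, where only the degree of the denominator matters.
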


\begin{lemma}
\label{hilbert3-3}
Let $A$ be a UFD and let $(f_i)$ be a sequence of elements of $A$.  Assume that there exists an integer $m$ and
elements $\alpha_1, \ldots, \alpha_m$ of $A$ such that
\begin{displaymath}
f_k=\sum_{i=1}^m \alpha_i f_{k-i}
\end{displaymath}
holds for all $k \gg 0$.  Amongst all such expressions choose one with $m$ minimal.  Then given any $N$ there
exist $k_1, \ldots, k_m > N$ such that the determinant of the matrix $(f_{k_i-j})$ is non-zero.
\end{lemma}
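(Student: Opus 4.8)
The plan is to argue by contradiction and reduce everything to Lemma~\ref{hilbert3-2}. If $(f_i)$ is eventually zero then $m=0$, the matrix in question is empty and has determinant $1$, so the conclusion is trivial; hence we may assume $m\ge 1$. Suppose the lemma fails: there is an $N$ (which we enlarge if necessary so that $N\ge m$) such that $\det(f_{k_i-j})=0$ for all $k_1,\dots,k_m>N$. This is exactly the hypothesis of Lemma~\ref{hilbert3-2}, applied over the fraction field $K=\Frac(A)$ (with ``sufficiently large'' witnessed by $N$), so that lemma produces an expression $\sum_{i\ge 0} f_i t^i = a/b$ with $a,b\in K[t]$ and $\deg b\le m-1$.

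The real work is to descend this identity from $K$ to $A$ without increasing $\deg b$. First clear denominators and cancel common factors: choosing a common denominator $c\in A\setminus\{0\}$ with $ca,cb\in A[t]$ and setting $e=\gcd(ca,cb)\in A[t]$, which is a nonzero constant since $a,b$ are coprime in $K[t]$, we may write $ca=ea'$, $cb=eb'$ and so reduce to $\sum_i f_i t^i = a'/b'$ with $a',b'\in A[t]$ coprime in $A[t]$ and $\deg b'=\deg b\le m-1$. On the other hand, the hypothesized recurrence $f_k=\sum_{i=1}^m \alpha_i f_{k-i}$ (valid for $k\gg 0$) says precisely that $q(t)\cdot\sum_i f_i t^i = p(t)$ for $q(t)=1-\alpha_1 t-\cdots-\alpha_m t^m\in A[t]$ and some $p(t)\in A[t]$. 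Comparing the two expressions gives $a'q=pb'$ in $A[t]$; since $A[t]$ is a UFD and $\gcd(a',b')=1$, we get $b'\mid q$ in $A[t]$, say $q=b'r$ with $r\in A[t]$. Evaluating at $t=0$ gives $1=q(0)=b'(0)r(0)$, so $b'(0)$ is a unit of $A$.

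Rescaling $(a',b')$ by $b'(0)^{-1}\in A$ we may assume $b'(0)=1$; writing $b'=1+b_1 t+\cdots+b_\ell t^\ell$ with $\ell=\deg b'\le m-1$ and $b_i\in A$, the identity $b'\cdot\sum_i f_i t^i = a'$ (a polynomial in $A[t]$) forces $f_k=-b_1 f_{k-1}-\cdots-b_\ell f_{k-\ell}$ for all $k$ beyond $\deg a'$. This is a recurrence of length $\le m-1$ with coefficients in $A$, contradicting the minimality of $m$. Hence no such $N$ exists, which is the assertion of the lemma.

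I expect the only genuine subtlety to be this descent step: a recurrence over $K$ does not automatically clear to a recurrence over $A$ of the same length, since clearing denominators destroys the normalization of the leading coefficient. The point is that the \emph{given} length-$m$ recurrence over $A$ supplies a relation $q\cdot\sum_i f_i t^i = p$ with $q(0)=1$, and combining this with the short relation coming from Lemma~\ref{hilbert3-2} through unique factorization in $A[t]$ is exactly what forces $b'(0)$ to be a unit, which is what makes the rescaling — and hence the $A$-recurrence — possible.
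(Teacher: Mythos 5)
Your argument is correct, and it uses the same essential ingredients as the paper's proof: Lemma~\ref{hilbert3-2} applied over $K=\Frac(A)$, unique factorization in $A[t]$, and the evaluation-at-$t=0$ trick exploiting $q(0)=1$. The organization is reversed, though: the paper first uses minimality of $m$ to prove that $p=qf$ and $q$ are coprime in $A[t]$, and then gets the contradiction from a pure degree count ($q$ would divide $ah$ with $\deg(ah)<\deg q$), whereas you take the coprimality for free on the short fraction (reducing $a'/b'$ over $A[t]$), deduce $b'\mid q$, and contradict minimality directly by descending to a recurrence of length $\le m-1$ with coefficients in $A$. Both hinge on the same observation, that $q(0)=1$ forces the relevant constant term to be a unit, so this is essentially the paper's argument with the roles of coprimality and minimality interchanged; your version has the mild advantage of making the descent from a $K$-relation to an $A$-recurrence explicit, which is indeed the only delicate point. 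One small imprecision: Lemma~\ref{hilbert3-2} as stated does not assert that $a$ and $b$ are coprime in $K[t]$, which you invoke to conclude that $\gcd(ca,cb)$ is a constant; either reduce $a/b$ to lowest terms first (which only lowers $\deg b$), or simply divide $ca$ and $cb$ by their gcd in $A[t]$ and note $\deg b'\le\deg b\le m-1$, which is all the argument needs.
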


\begin{proof}
Put $f=\sum f_i t^i$, an element of $A \lbb t \rbb$, and $q=1-\sum_{i=1}^m \alpha_i t^i$, an element of $A[t]$.
We have that $qf$ belongs to $A[t]$; write $p=qf$ so that $f=p/q$.  Note that $A[t]$ is also a UFD and that the
minimality assumption on $m$ implies that $p$ and $q$ are coprime.  Indeed, say $p$ and $q$ are both divisible
by some non-unit $r \in A[t]$.  Then we can write $p=rp'$ and $q=rq'$.  Evaluating the second expression at $t=0$
gives $1=r(0) q'(0)$ so that $r(0)$ and $q'(0)$ both belong to $A^{\times}$.  We can therefore scale $r$ so that
$r(0)=1$, in which case $q'(0)=1$ as well.  Since $r$ is assumed to be a non-unit and $r(0)$ is a unit, it follows
that $r$ has degree at least 1.  We then have $f=p'/q'$ with $q'(0)=1$ and $\deg{q}'<m$, contradicting the minimality
of $m$.

Now, assume for the sake of contradiction that $\det(f_{k_i-j})=0$ for all sufficiently large $k_i$.  By
Lemma~\ref{hilbert3-2} we have $f=g/h$ where $g$ and $h$ belong to $K[t]$ and $\deg{h}<m$.  Here $K$ is the field of
fractions of $A$.  Pick $a \in A$ non-zero so that $ag$ and $ah$ belong to $A[t]$.  We then have $(ah)p=(ag) q$.  Since
$p$ is coprime to $q$ it follows that $ah$ is divisible by $q$.  However, this contradicts $h$ having smaller degree
than $q$.  We thus conclude that $\det(f_{k_i-j})$ cannot vanish for all sufficiently large $k_i$.
\end{proof}

\begin{proposition}
\label{hilbert3-4}
Let $W$ be a finite dimensional vector space over a subfield $\C$ and let $V$ be a finite dimensional
subspace of $\Sym(W)$ spanned
by homogeneous elements.  For a positive integer $r$, let $i_r:\Sym(V) \to \Sym(W)^{\otimes r}$ be the ring
homomorphism which on $V$ is given by $x \mapsto \sum \{x\}_i$, where $\{-\}_i:\Sym(W) \to \Sym(W)^{\otimes r}$ is the
ring homomorphism given by inclusion into the $i$th factor.  Then for $r \gg 0$ we have:
\begin{enumerate}
\item The map $i_r$ is injective.
\item If $x \in \Frac(\Sym(V))$ and $i_r(x)$ belongs to $\Sym(W)^{\otimes r}$ then $x$ belongs to $\Sym(V)$.
\item A series $f \in \Sym(V) \lbb t \rbb$ is rational if and only if $i_r(f)$ is.
\end{enumerate}
\end{proposition}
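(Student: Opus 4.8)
The plan is to interpret $i_r$ geometrically, deduce (a) from Lemma~\ref{hilbert3-1}, and then obtain (b) and (c) from surjectivity of the associated map. First I would set up a dictionary. Choose a homogeneous basis $x_1,\dots,x_n$ of $V$ (necessarily of positive degree — if $V$ contained a nonzero scalar, $i_r$ could never be injective), and identify $\Sym(V)$ with the polynomial ring $\C[x_1,\dots,x_n]$ and $\Sym(W)^{\otimes r}=\Sym(W^{\oplus r})$ with a polynomial ring. On spectra, $i_r$ is a morphism $\Psi_r\colon\Spec\Sym(W)^{\otimes r}\to\Spec\Sym(V)=V^{*}$, and unwinding the definition $i_r(x_j)=\sum_k\{x_j\}_k$ one sees that on $\C$-points $\Psi_r(w_1,\dots,w_r)=\phi(w_1)+\dots+\phi(w_r)$, the sum taken in $V^{*}$, where $\phi\colon W^{*}\to V^{*}$ is the morphism dual to the inclusion $V\hookrightarrow\Sym(W)$. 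The components of $\phi$ are the $x_j$, homogeneous of positive degree, so the image of $\phi$ contains $0$ and lies in no proper linear subspace of $V^{*}$ (the $x_j$ being linearly independent). Hence Lemma~\ref{hilbert3-1} applies: for $r\gg0$ every point of $V^{*}$ is a sum of $r$ points of the image of $\phi$, i.e. $\Psi_r$ is surjective on $\C$-points. Fix such an $r$. Part (a) is then immediate: a surjective morphism is dominant, so $i_r=\Psi_r^{*}$ is injective.

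For part (b), let $\xi\in\Frac\Sym(V)$ with $i_r(\xi)\in\Sym(W)^{\otimes r}$, and write $\xi=h/g$ with $h,g\in\Sym(V)$ coprime ($\Sym(V)$ being a UFD); since $i_r$ is injective it extends to fraction fields and $i_r(h)=i_r(\xi)\,i_r(g)$ holds in $\Sym(W)^{\otimes r}$. Replacing $g$ by a prime factor (still coprime to $h$), I may assume $g$ is prime, and I claim $g$ is a scalar. If not, pick any $\C$-point $z$ of $V^{*}$ with $g(z)=0$; by surjectivity of $\Psi_r$ we have $z=\Psi_r(\tilde z)$ for some $\C$-point $\tilde z$, and then $h(z)=i_r(h)(\tilde z)=i_r(\xi)(\tilde z)\,i_r(g)(\tilde z)=i_r(\xi)(\tilde z)\,g(z)=0$. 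Thus $h$ vanishes on $V(g)$, so $h\in\sqrt{(g)}=(g)$ by the Nullstellensatz, contradicting coprimality. Hence $g$ is a scalar and $\xi\in\Sym(V)$.

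Part (c): the "only if" direction is formal, since applying $i_r$ coefficientwise to an identity $q(t)f(t)=p(t)$ with $q,p\in\Sym(V)[t]$ and $q(0)=1$ yields the analogous identity over $\Sym(W)^{\otimes r}$. For "if", write $f=\sum_i f_i t^i$ and $g_i=i_r(f_i)$. Rationality of $i_r(f)$ over the UFD $\Sym(W)^{\otimes r}$ produces a monic recurrence $g_k=\sum_{i=1}^{m}\alpha_i g_{k-i}$ for $k\gg0$ with $\alpha_i\in\Sym(W)^{\otimes r}$; take $m$ minimal. By Lemma~\ref{hilbert3-3} the determinants $\det(g_{k_i-j})_{1\le i,j\le m}$ are nonzero for arbitrarily large $k_i$, so by injectivity of $i_r$ the determinants $\det(f_{k_i-j})_{1\le i,j\le m}$ over $\Sym(V)$ are nonzero; on the other hand, an order-$m$ recurrence has an $m$-dimensional solution space, so all large $\det(g_{k_i-j})_{1\le i,j\le m+1}$, hence $\det(f_{k_i-j})_{1\le i,j\le m+1}$, vanish. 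Lemma~\ref{hilbert3-2} (with $K=\Frac\Sym(V)$) then shows $f$ is rational over $\Frac\Sym(V)$, and the nonvanishing of the size-$m$ determinants forces its minimal monic recurrence $f_k=\sum_{i=1}^{m}\beta_i f_{k-i}$ to have order exactly $m$. Applying $i_r$ gives a monic order-$m$ recurrence for $(g_i)$ over $\Frac\Sym(W)^{\otimes r}$; since the minimal order there is also $m$ (the rank of the Hankel matrix does not change under field extension) and the minimal monic recurrence of that order is unique, it coincides with $(\alpha_i)$, so $i_r(\beta_i)=\alpha_i\in\Sym(W)^{\otimes r}$. By part (b), $\beta_i\in\Sym(V)$, so $q(t)=1-\sum_i\beta_i t^i\in\Sym(V)[t]$ satisfies $q(0)=1$ and $q(t)f(t)\in\Sym(V)[[t]]$ is a polynomial; hence $f$ is rational over $\Sym(V)$.

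I expect the crux to be this last descent: getting rationality over $\Sym(V)$ \emph{itself}, not merely over $\Frac\Sym(V)$. This is exactly why part (b) and the determinant Lemmas~\ref{hilbert3-2}--\ref{hilbert3-3} are indispensable — they let one match the minimal monic recurrences on the two sides and then certify that the resulting denominator already lies in $\Sym(V)[t]$. Everything else (the geometric dictionary, surjectivity of $\Psi_r$, and the Nullstellensatz argument for (b)) is routine once the dictionary is in place.
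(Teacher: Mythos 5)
Your proof is correct and follows essentially the same route as the paper's: surjectivity of $i_r^*$ via Lemma~\ref{hilbert3-1} yields (a) and (b), and (c) comes from the minimal recurrence over $\Sym(W)^{\otimes r}$, the nonvanishing Hankel determinants of Lemma~\ref{hilbert3-3}, and part (b). The only differences are minor: the paper gets $\alpha_i \in i_r(\Frac(\Sym(V)))$ directly by inverting the Hankel matrix $A$ in $Ax=y$ (avoiding your detour through Lemma~\ref{hilbert3-2} and uniqueness of the minimal monic recurrence, which is correct but longer), and it opens with the one-sentence reduction from a subfield of $\C$ to $\C$ itself, which your use of $\C$-points and the Nullstellensatz tacitly assumes.
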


\begin{proof}
We can check the conclusions of the proposition by tensoring up to $\C$, and so we may thus assume we are working with
complex vector spaces.  The map $\Sym(V) \to \Sym(W)$ corresponds to an algebraic map $f:W^* \to V^*$ whose components
are homogeneous of positive degree.  Since $V \to \Sym(W)$ is injective, the image of $f$ is
not contained in any linear subspace of $V^*$.  It thus follows from Lemma~\ref{hilbert3-1} that for $r \gg 0$, any
element of $V^*$ is a sum of $r$ elements of the image of $f$.  Now, the map $i_r$ corresponds to the map $i_r^*:
(W^*)^r \to V^*$ given by $(w_1, \ldots, w_n) \mapsto \sum f(w_i)$.  We thus see that $i_r^*$ is surjective for all
$r \gg 0$.

Fix $r \gg 0$ and put $i=i_r$.  We now show that (a) and (b) hold.  The equation $i(x)=0$ is
equivalent to $x \circ i^*=0$, where $x$ is thought of as a function $V^* \to \C$.  Since $i^*$ is surjective, this
equation implies $x=0$.  This shows that $i$ is injective.  Now say that $x$ belongs to $\Frac(\Sym(V))$ and $i(x)$
is a polynomial.  Then $x$ defines a rational function on $V^*$ such that $x \circ i^*$ is a regular function on
$(W^*)^r$.  Since $i^*$ is surjective, $x$ must be regular on $V^*$ and thus it belongs to $\Sym(V)$.

We now prove (c).  It is clear that if $f$ is rational then $i(f)$ is as well.  Thus let $f \in \Sym(V) \lbb t \rbb$
be given and assume that $i(f)$ is rational.  Write $f=\sum f_i t^i$ with $f_i \in \Sym(V)$.  The rationality of
$i(f)$ means that we can find a polynomial $q=1-\sum_{i=1}^m \alpha_i t^i$ with $\alpha_i \in \Sym(W)^{\otimes r}$
such that $qf$ is a polynomial.  Choose $q$ with $m$ minimal.  We then have
\begin{displaymath}
f_k = \sum_{i=1}^m \alpha_i f_{k-i}
\end{displaymath}
for all sufficiently large $k$.  Thus for all large $k_1, \ldots, k_m$ we have the equation $Ax=y$ where
$A$ is the $m \times m$ matrix $(f_{k_i-j})$, $x$ is the column vector $(\alpha_i)$ and $y$ is the column vector
$(f_{k_i})$.  By the minimality of $m$ and Lemma~\ref{hilbert3-3} we can pick $k_i$ so that $\det{A}$ is non-zero.
We then find $x=A^{-1} y$, which shows that $\alpha_i$ belongs to $i(\Frac(\Sym(V)))$.  Since $\alpha_i$ also belongs
to $\Sym(W)^{\otimes r}$, statement (b) of the lemma implies that each
$\alpha_i$ belongs to $i(\Sym(V))$.  Thus $q=i(q')$ for a unique $q'$ in $\Sym(V)[t]$ with the required properties
to establish that $f$ is rational.
\end{proof}

We now prove the theorem:

\begin{proof}[Proof of Theorem~\ref{hilbert3}]
Let $\Gamma$, $A$ and $M$ be given.  Let $P$ be the set of partitions appearing in $M$.  The set $P$ is finite; indeed,
only finitely many partitions appear in $A$ (see the discussion preceding Theorem~\ref{wnnoeth}) and $M$ is a
quotient of $A \otimes F$ for some finite length object $F$ of $\Sym(\mc{S})$.  Let $V$ be the subspace of $K(\mc{S})$
spanned by the $s_{\lambda}$ with $\lambda \in P$.  Thus $H^*_{M^{\Gamma}}$ belongs to $\Sym(V) \lbb t \rbb$.

Let $U$ be a finite dimensional vector space whose dimension exceeds the number of rows of any partition appearing
in $P$ and let $G=\SL(U)$.  Then $K(G)$ is a polynomial ring.  (We use $\SL(U)$ instead of $\GL(U)$ so that the
Grothendieck group is a polynomial ring; it makes the argument a bit cleaner.)  Evaluation on $U$ gives a map
$K(\mc{S}) \to K(G)$ which is injective when restricted to $V$.  Let $r$ be a large integer and let
\begin{displaymath}
\phi : \Sym(K(\mc{S})) \to K(G)^{\otimes r}
\end{displaymath}
be the ring map which is given by
\begin{displaymath}
\phi([S])=\sum_{i=1}^r \{[S(U)]\}_i
\end{displaymath}
for $[S]$ in $K(\mc{S})$, where $\{-\}_i:K(G) \to K(G)^{\otimes r}$ is the inclusion in the $i$th factor.  By
Proposition~\ref{hilbert3-4}(c), a power series $f \in \Sym(V) \lbb t \rbb$ is a rational function if and only if $\phi(f)$
is.  It thus suffices to show that $\phi(H^*_{M^{\Gamma}})$ is rational.

To understand the map $\phi$ we lift it to a functor $\Phi$, as follows.  First,
identify $K(G)^{\otimes r}$ with $K(G^r)$.  Let $U_1, \ldots, U_r$ be copies of $U$ and put $G_i=\SL(U_i)$; we think of
$G^r$ as $G_1 \times \cdots \times G_r$.  We regard $\phi$ as a map
\begin{displaymath}
\phi : \Sym(K(\mc{S})) \to K(G_1 \times \cdots \times G_r).
\end{displaymath}
It can be described explicitly on $K(\mc{S})$ as follows:
\begin{displaymath}
\phi([S])=\sum_{i=1}^r [S(U_i)].
\end{displaymath}
Now define $\Phi$ to be the functor
\begin{displaymath}
\Phi:\Sym(\mc{S}) \to \Sym(\Vec), \qquad \Phi(F)(L)=\bigoplus_{L=L_1 \amalg \cdots \amalg L_r} F((U_1)_{L_1} \amalg
\cdots \amalg (U_r)_{L_r}).
\end{displaymath}
Here the sum is over all partitions of $L$ into $r$ parts and $(U_i)_{L_i}$ denotes the family $(V, L_i)$ where
$V_x=U_i$ for all $x \in L_i$.  (A more conceptual description of $\Phi$ is as follows.  Let $i:\Cfs^r \to \Vec^f$
take $(L_1, \ldots, L_r)$ to $(U_1)_{L_1} \amalg \cdots \amalg (U_r)_{L_r}$ and let $j:\Cfs^r \to \Cfs$ be the addition
map.  Then $\Phi(F)=j_*i^*F$.)  One readily verifies that $\Phi$ is a tensor functor.  We now claim that $\Phi$ lifts
$\phi$, that is, we have
\begin{displaymath}
\phi([N])=[\Phi(N)]
\end{displaymath}
for all $N$ in $\Sym(\mc{S})$.  Both sides above are additive in $N$ so it suffices to treat the case
where $N$ is a simple object of $\Sym(\mc{S})$.  As we have previously stated (\S \ref{ss:symcat}), the simple objects
are of the form
$\bigotimes \bS_{\lambda_i}(S_i)$ where the $\lambda_i$ are partitions and $S_i$ are distinct simple objects of
$\mc{S}$.  Since $\Phi$ is a tensor functor and $\phi$ is a ring homomorphism, we are reduced to the case of
considering $N=\bS_{\lambda}(S)$.  Put $k=\vert \lambda \vert$.  Then $N$ is the equivariant functor
$\Vec^k \to \Vec$ given by
\begin{displaymath}
(V_1, \ldots, V_k) \mapsto \Sp_{\lambda} \otimes S(V_1) \otimes \cdots \otimes S(V_k).
\end{displaymath}
We thus find that $\Phi(N) \in \Sym(\Vec)$ is supported in order $k$ and assigns to the set $\{1, \ldots, k\}$
the space
\begin{displaymath}
\Sp_{\lambda} \otimes \bigoplus_{i_1, \ldots, i_k} S(U_{i_1}) \otimes \cdots \otimes S(U_{i_k})
\end{displaymath}
where the sum is over all $(i_1, \ldots, i_k)$ in $\{1, \ldots, r\}^k$.  We therefore have
\begin{displaymath}
[\Phi(N)]=\frac{\dim{\Sp_{\lambda}}}{k!} \left( \sum_{i=1}^r [S(U_i)] \right)^k.
\end{displaymath}
On the other hand,
\begin{displaymath}
[N]=\frac{\dim{\Sp_{\lambda}}}{k!} [S]^k.
\end{displaymath}
Applying $\phi$ to the above gives exactly the previous formula for $[\Phi(N)]$.  This proves the claim.

The above discussion, and the fact that $\Phi$ commutes with the formation of $\Gamma$ invariants, shows that
\begin{displaymath}
\phi(H_{M^{\Gamma}}^*)=H_{\Phi(M)^{\Gamma}, G_1 \times \cdots \times G_r}^*.
\end{displaymath}
As $\Phi$ is a tensor functor, $\Phi(A)$ is a twisted commutative algebra finitely generated in order 1 and
$\Phi(M)$ is a finitely generated module over it.  Thus $H^*_{\Phi(M)^{\Gamma}, G_1 \times \cdots \times G_r}$
is a rational function in $K(G_1 \times \cdots \times G_r) \lbb t \rbb$ by Theorem~\ref{hilbert2}.  We thus find
that $\phi(H_{M^{\Gamma}}^*)$ is rational, which completes the proof.
\end{proof}

We note the following corollary of the proposition.

\begin{corollary}
\label{hilbert4}
Let $A$ be an algebra in $\Sym(\mc{S})$ finitely generated in order 1 on which a finite group $\Gamma$ acts and let
$M$ be a finitely generated $A^{\Gamma}$-module.  Then $H_M^*$ is a rational function of the $s_{\lambda}$.
\end{corollary}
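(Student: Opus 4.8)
The plan is to reduce to Theorem~\ref{hilbert3} by ``inducing'' $M$ from $A^{\Gamma}$ up to $A$. Write $B=A^{\Gamma}$. Since $\Sym(\mc{S})$ is an abelian symmetric monoidal category, the relative tensor product $\wt{M}=A\otimes_B M$ is defined as usual (as a cokernel), and it is naturally an $A$-module. Because $M$ is a finitely generated $B$-module it is a quotient of $B\otimes F$ for some finite length object $F$ of $\Sym(\mc{S})$, so $\wt{M}$ is a quotient of $A\otimes_B(B\otimes F)=A\otimes F$; thus $\wt{M}$ is a finitely generated $A$-module. The group $\Gamma$ acts on $\wt{M}$ through the $A$-factor (trivially on $M$), and because $\Gamma$ acts on $A$ by algebra homomorphisms this action is compatible with the $A$-module structure.

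The heart of the argument is the identification $\wt{M}^{\Gamma}=M$. Since $\Gamma$ is finite and we work over $\C$, averaging yields a $\Gamma$-equivariant $B$-linear splitting $A=B\oplus A'$, where $A'$ is the sum of the non-trivial $\Gamma$-isotypic components of $A$. Tensoring over $B$ with $M$ gives $\wt{M}=M\oplus(A'\otimes_B M)$ as objects carrying a $\Gamma$-action, the action being trivial on the first summand. Now $A'\otimes_B M$ is a subquotient of $A'\otimes_{\C}M$, and the $\Gamma$-isotypic decomposition of the latter involves only the types occurring in $A'$ (as $\Gamma$ acts trivially on $M$); hence it has no trivial component, and neither does any subquotient, since everything in sight is semisimple for the $\Gamma$-action. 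Therefore $(A'\otimes_B M)^{\Gamma}=0$ and $\wt{M}^{\Gamma}=M$. (Note $H^*_M$ is well defined for the usual reason: only finitely many partitions occur in $A$, hence in $B$, hence in $M$.)

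Granting this, the corollary is immediate: applying Theorem~\ref{hilbert3} to the finitely generated $A$-module $\wt{M}$ with its compatible $\Gamma$-action, $H^*_{\wt{M}^{\Gamma}}$ is a rational function of the $s_{\lambda}$, and $H^*_{\wt{M}^{\Gamma}}=H^*_M$. I expect the only step requiring genuine care to be the identification $\wt{M}^{\Gamma}=M$, i.e.\ that inducing up to $A$ and then taking $\Gamma$-invariants returns the original module; the rest is formal bookkeeping with the monoidal structure on $\Sym(\mc{S})$.
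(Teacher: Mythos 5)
Your proposal is correct and is essentially the paper's own argument: the paper likewise sets $N=A\otimes_{A^{\Gamma}}M$, notes it is a finitely generated $A$-module on which $\Gamma$ acts (trivially on the $M$-factor), identifies $N^{\Gamma}=M$, and applies Theorem~\ref{hilbert3}. The only difference is that you spell out the averaging/isotypic-splitting verification of $\wt{M}^{\Gamma}=M$, which the paper asserts without proof.
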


\begin{proof}
Let $N=A \otimes_{A^{\Gamma}} M$.  Then $N$ is a finitely generated $A$-module and $N^{\Gamma}=M$, where $\Gamma$ acts
on $N$ by acting trivially on $M$.  We thus have that $H^*_M=H^*_{N^{\Gamma}}$ is rational by the proposition.
\end{proof}

\subsection{Hilbert series of $\Delta$-modules}

Let $M$ be a $\Delta$-module.  We define its Hilbert series to be the Hilbert series of the underlying object of
$\Sym(\mc{S})$.  Our main result on such Hilbert series is the following, which follows immediately from
Corollary~\ref{hilbert4} and Proposition~\ref{delta-filt}.

\begin{theorem}
\label{hilbert5}
Let $M$ be a small $\Delta$-module.  Then $H^*_M$ is a rational function of the $s_{\lambda}$.
\end{theorem}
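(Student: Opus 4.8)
The plan is to deduce the theorem from Proposition~\ref{delta-filt} and Corollary~\ref{hilbert4}, with the additivity of the Hilbert series serving as the bridge between them. First I would observe that $H^*_M$ is, by construction, nothing but the class $[M]^*$ of $M$ in (the completion of) $\Sym(K(\mc{S}))$, with the formal variable $t$ simply recording order; in particular it is additive on short exact sequences of objects of $\Sym(\mc{S})$. Moreover the class of rational functions of the $s_\lambda$ is closed under finite sums. Hence it is enough to produce a finite filtration of $M$ whose successive quotients have rational Hilbert series.

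Such a filtration is exactly what Proposition~\ref{delta-filt} supplies: since $M$ is small, there is a finite chain $0 = F_0 \subset F_1 \subset \cdots \subset F_r = M$ of $\Delta$-submodules, together with integers $n_1, \dots, n_r$, such that each quotient $F_i/F_{i-1}$, regarded as an object of $\Sym(\mc{S})$, carries the structure of a finitely generated module over $W_{n_i}^{S_{n_i}}$. Applying additivity along this chain reduces the theorem to showing that $H^*_{F_i/F_{i-1}}$ is rational for each $i$.

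So fix $i$ and write $n = n_i$. The algebra $W_n = \Sym(T_n)$ in $\Sym(\mc{S})$ is finitely generated in order $1$, since its generating object $T_n$ is concentrated in order $1$, and the finite group $\Gamma = S_n$ acts on $W_n$ by algebra automorphisms. Since $F_i/F_{i-1}$ is a finitely generated $W_n^{S_n} = W_n^{\Gamma}$-module, Corollary~\ref{hilbert4} applies verbatim and shows that $H^*_{F_i/F_{i-1}}$ is a rational function of the $s_\lambda$. Summing over $i$ completes the proof.

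I do not expect a genuine obstacle in this argument: the substance has already been spent on Proposition~\ref{delta-filt} (which rests on Proposition~\ref{delta-w} and Theorem~\ref{tnnoeth}) and on Corollary~\ref{hilbert4} (which rests on Theorem~\ref{hilbert3}, and through it on the entire ``ladder'' down to twisted commutative algebras and Theorem~\ref{hilbert2}). The only points worth a moment's attention are bookkeeping: that the Hilbert series is additive on exact sequences (clear from its Grothendieck-group description), that $W_n$ is finitely generated \emph{in order $1$} as Corollary~\ref{hilbert4} requires (clear since $T_n$ lives in order $1$), and that Proposition~\ref{delta-filt} only gives a $W_{n_i}^{S_{n_i}}$-module structure at the level of $\Sym(\mc{S})$ rather than a $\Delta$-module structure --- but that is precisely the kind of input Corollary~\ref{hilbert4} is built to accept, so nothing is lost.
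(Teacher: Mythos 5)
Your proof is correct and is exactly the paper's argument: the paper states that Theorem~\ref{hilbert5} follows immediately from Proposition~\ref{delta-filt} and Corollary~\ref{hilbert4}, and your write-up simply spells out the additivity-over-the-filtration bookkeeping that makes this immediate. Nothing is missing.
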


\begin{remark}
We expect this result to hold for all finitely generated $\Delta$-modules.  We can prove it for a much larger class
than the class of small $\Delta$-modules, but we have not been able to prove it for all finitely generated
$\Delta$-modules.
\end{remark}

\section{Syzygies of $\Delta$-schemes}

We now apply the theory we have developed to the study of the syzygies of certain families of schemes, which we
call $\Delta$-schemes.

\subsection{$\Delta$-schemes}
\label{ss:delta}

An \emph{abstract $\Delta$-scheme} is a functor $X$ from the category $\Vec^{\Delta}$ to the category of schemes
over $\C$.  The notion of a morphism of abstract $\Delta$-schemes is evident.  In this way we have a category of
abstract $\Delta$-schemes.  We say that a morphism $X \to Y$ of abstract $\Delta$-schemes is a closed immersion if
$X(V, L) \to Y(V, L)$ is a closed immersion for all $(V, L)$.

The category of abstract $\Delta$-schemes is too large for our purposes; we now introduce a more manageable category.
For an object $(V, L)$ of $\Vec^{\Delta}$, let $\bV(V, L)$ be the vector space $\bigotimes_{x \in L} V_x^*$, regarded
as a scheme.  Then $\bV$ is an abstract $\Delta$-scheme, in an obvious way.  A \emph{$\Delta$-scheme} is a pair
$(X, i)$ consisting of an abstract $\Delta$-scheme $X$ such that $X(V, L)$ is non-empty for all $(V, L)$ and a closed
immersion $i:X \to \bV$.  Note that if $X$ is a $\Delta$-scheme, then $X(V, L) \subset \bV(V, L)$ is closed under
scaling (by functoriality), and is thus a cone.  A morphism of $\Delta$-schemes is a morphism of abstract
$\Delta$-schemes which commutes with the embeddings into $\bV$.
There is at most one morphism between two $\Delta$-schemes, and so the category of $\Delta$-schemes is
partially ordered.

We can think of $\Delta$-schemes in terms of their ideals of definitions, as follows.  Let
$P(V, L)$ be the affine coordinate ring of $\bV(V, L)$.  We have
\begin{displaymath}
P(V, L) = \Sym \bigg( \bigotimes_{x \in L} V_x \bigg).
\end{displaymath}
A \emph{$\Delta$-ideal} is a rule which assigns to each object $(V, L)$ of $\Vec^{\Delta}$ an ideal $I(V, L)$ of
$P(V, L)$ such that if $(V, L) \to (V', L')$ is a morphism in $\Vec^{\Delta}$ then under the map $P(V, L) \to
P(V', L')$ the ideal $I(V, L)$ maps into $I(V', L')$.  Suppose $I$ is a $\Delta$-ideal.  Let $X(V, L)$ be the
subscheme of $\bV(V, L)$ cut out by $I(V, L)$.  Then $X$ is naturally a $\Delta$-scheme.  The association
$I \mapsto X$ is an anti-equivalence of categories.  By an \emph{element} of $P$ we mean an element of $P(V, L)$ for
some $(V, L)$.  Given any collection $S$ of elements of $P$ there is a unique minimal $\Delta$-ideal $I$ containing
$S$; we call $I$ the $\Delta$-ideal \emph{generated} by $S$.

In more concrete terms, a $\Delta$-scheme can be thought of as a sequence of rules $(X_n)_{n \ge 0}$, where $X_n$
assigns to each $n$-tuple of vector spaces $(V_1, \ldots, V_n)$ a closed subscheme $X_n(V_1, \ldots, V_n)$ of
$\bigotimes V_i^*$, such that:  $X_n$ is functorial in $(V_1, \ldots, V_n)$; $X_n$ is $S_n$-equivariant; and
$X_{n+1}(V_1, \ldots, V_{n+1})$ is contained in $X_n(V_1, \ldots, V_{n-1}, V_n \otimes V_{n+1})$.  Of course, we can
describe $\Delta$-ideals similarly.

\subsection{Examples of $\Delta$-schemes}

We now give some examples of $\Delta$-schemes.  There are two rather trivial examples:  $\bV$ itself is a
$\Delta$-scheme, and the final object in the category; and the rule $(V, L) \mapsto \Spec(\C)$ (embedded in $\bV$ as
the origin) is a $\Delta$-scheme, and the initial object in the category.  A more interesting example, and the one
which motivated the whole theory, is the Segre embedding:  take $X(V, L)$ to be the set of pure tensors in
$\bV(V, L)=\bigotimes_{x \in L} V_x^*$, with its usual scheme structure.

In fact, the Segre example is just the first in a family.  Let $d \ge 0$ be an integer.  As a vector space, the algebra
$P(V, L)$ breaks up into isotypic pieces for the action of the group $\prod_{x \in L} \GL(V_x)$; each such
piece corresponds to a family of partitions indexed by $L$.  Let $I_d(V, L)$ be the sum of all
pieces in which some partition has more than $d$ rows.  One easily checks that $I_d(V, L)$ is an ideal of $P(V, L)$,
and so cuts out a closed subscheme $\Sub_d(V, L)$ of $\bV(V, L)$.  Geometrically, a point $v$ in $\bV(V, L)$ belongs to
$\Sub_d(V, L)$ if and only if for each $x \in L$ there exists a quotient $U_x$ of $V_x$ of dimension at most $d$
such that $v$ belongs to $\bigotimes_{x \in L} U_x^*$.  For further discussion of these subspace varieties, see
\cite[\S 3]{LW2}.

Now, it is clear that $\Sub_d(V, L)$ is functorial for maps in $\Vec^f$.  However, for $d \ge 2$, it is not functorial
for maps in $\Vec^{\Delta}$, and is therefore not a $\Delta$-scheme.  Nonetheless, $\Sub_d$ contains a unique maximal
$\Delta$-scheme, which we call $\dSub_d$.  Geometrically, we have
\begin{displaymath}
\dSub_d(V, L)=\bigcap \Sub_d(V, \ms{U}),
\end{displaymath}
where the scheme-theoretic intersection is taken over all partitions $\ms{U}$ of $L$.
Algebraically, $\dSub_d$ corresponds to the $\Delta$-ideal generated by the $I_d(V, L)$.  It follows from
\cite[Thm~3.1]{LW2} that the $\Delta$-ideal corresponding to $\dSub_{d-1}$ (for $d>1$) is generated by any element of
the one dimensional space
\begin{displaymath}
\lw{d}{\C^d} \otimes \lw{d}{\C^d} \subset \Sym^d(\C^d \otimes \C^d) \subset P_2(\C^d, \C^d).
\end{displaymath}
One can describe $\dSub_d$ as the largest $\Delta$-scheme whose coordinate ring has the property that every partition
appearing in it has at most $d$ rows.  

The $\Delta$-scheme $\dSub_0$ is just the initial $\Delta$-scheme, i.e., the origin in $\bV$.
The $\Delta$-scheme $\dSub_1$ coincides with $\Sub_1$, and is the Segre embedding.  The $\Delta$-scheme
$\dSub_2$ (which does not coincide with $\Sub_2$) is equal to the secant variety of the Segre embedding --- this
assertion is essentially equivalent to the recently proved GSS conjecture (see \cite{Raicu}).  We do not know an elegant
geometric description of $\dSub_d$ for $d>2$.

The category of $\Delta$-schemes is closed under several natural operations.  Let $X$ and $Y$ be two $\Delta$-schemes.
Then the union $X \cup Y$ and scheme-theoretic intersection $X \cap Y$ of $X$ and $Y$ inside of $\bV$ are
$\Delta$-schemes.  In this way, the poset of $\Delta$-schemes is a lattice.  Let $X+Y$ be the scheme-theoretic
image of the map $X \times Y \to \bV$ given by $(x, y) \mapsto x+y$.  Then $X+Y$ is a $\Delta$-scheme.  In particular,
the secant schemes to a $\Delta$-scheme are again $\Delta$-schemes.  The same holds for tangent
schemes.  The functor $X_{\red}$ which attaches to $(V, L)$ the reduced subscheme $X_{\red}(V, L)$ of
$X(V, L)$ is a $\Delta$-scheme.
Starting with the $\dSub_d$ and using these operations, one obtains a large number of examples of
$\Delta$-schemes.

\subsection{Syzygies of $\Delta$-schemes}
\label{ss:delta-syz}

Fix a $\Delta$-scheme $X$.  Let $R(V, L)$ be the affine coordinate ring of $X(V, L)$, a quotient of the polynomial
ring $P(V, L)$.  Put
\begin{displaymath}
F_p(V, L)=\Tor_p^{P(V, L)}(R(V, L), \C).
\end{displaymath}
As discussed in the introduction, the $F_p(V, L)$ record the syzygies of $R(V, L)$ as a $P(V, L)$-module.  The
functorial properties of $R$, $P$ and $\Tor$ imply that $F_p$ is a $\Delta$-module; this will be made more clear in
the proof of the following theorem.  Let $F_p^{(d)}$ be its
degree $d$ piece; it too is a $\Delta$-module.  Our main result on syzygies is the following theorem:

\begin{theorem}
\label{thm:delta-small}
The $\Delta$-module $F_p^{(d)}$ is small.
\end{theorem}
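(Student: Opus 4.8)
The plan is to realize $F_p^{(d)}$ as a subquotient of $\Phi(T_d)$, following the last rung of the ladder in \S\ref{ss:outline}: resolve $R$ over $P$ by a Koszul complex whose terms are assembled from the small $\Delta$-modules $\Phi(T_n)$. Throughout I work in $\Mod_{\Delta}$ with the point-wise tensor product $\boxtimes$, which lifts from $\Sym(\mc{S})$ by letting little morphisms act diagonally.

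First I set up the $\Delta$-structures. A little morphism of $\Vec^{\Delta}$ acts on $\bigotimes_{x\in L}V_x$ by the identity, so $\Phi(T_1)$ — which assigns $\bigotimes_{x\in L}V_x$ to a family of positive order, and sits in degree $1$ — carries a tautological $\Delta$-structure, under which it is identified with the degree-one piece $R^{(1)}$ of the coordinate ring. The functor $P$ with $P(V,L)=\Sym\bigl(\bigotimes_{x\in L}V_x\bigr)$ is a commutative algebra object of $(\Mod_{\Delta},\boxtimes)$, freely generated in positive order by $\Phi(T_1)$, and $R$ with $R(V,L)=\bigotimes_{x\in L}\Sym(V_x)$ is a quotient algebra of $P$. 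The substance of this last claim is that the natural symmetrization surjections $\Sym^e\bigl(\bigotimes_x V_x\bigr)\twoheadrightarrow\bigotimes_x\Sym^e(V_x)$ are functorial for little morphisms and multiplicative in $e$ — that is, $R$ is genuinely an algebra object of $\Mod_{\Delta}$ and $P\to R$ a morphism of such.

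Granting this, I form the Koszul resolution $K_\bullet$ of $\C$ over $P$, with $K_p=P\boxtimes\lw{p}{\Phi(T_1)}$ and the usual differential, and base-change it along $P\to R$ to obtain a complex $C_\bullet$ in $\Mod_{\Delta}$ with $C_p=R\boxtimes\lw{p}{\Phi(T_1)}$ and $H_p(C_\bullet)=\Tor^{P}_p(R,\C)=F_p$; its differential $C_p\to C_{p-1}$ is induced by the multiplication $R^{(1)}\boxtimes R\to R$ of the $\Delta$-algebra $R$, hence is a morphism of $\Delta$-modules. Taking degree-$d$ parts gives a complex $C_\bullet^{(d)}$ with $C_p^{(d)}=R^{(d-p)}\boxtimes\lw{p}{\Phi(T_1)}$; since $\lw{p}{\Phi(T_1)}$ lives in degree $p$ and $R^{(e)}=0$ for $e<0$, it is concentrated in homological degrees $0\le p\le d$, in particular finite, with $H_p(C_\bullet^{(d)})=F_p^{(d)}$. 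Now the symmetrization map exhibits $R^{(e)}$, in positive order, as a quotient of $\Phi(T_1)^{\boxtimes e}$, while $\lw{p}{\Phi(T_1)}$ is a $\Delta$-submodule of $\Phi(T_1)^{\boxtimes p}$; using $\Phi(T_a)\boxtimes\Phi(T_b)\cong\Phi(T_{a+b})$ (immediate from the formula for $\Phi(T_n)$), we conclude that $C_p^{(d)}$ is a subquotient of $\Phi(T_d)$, hence small. As small $\Delta$-modules are closed under subquotients, $F_p^{(d)}=H_p(C_\bullet^{(d)})$ is small. (In order $0$ one argues directly: $F_p^{(d)}(V,\emptyset)$ is the degree-$d$ part of $\Tor^{\C[t]}_p$ of a quotient of $\C[t]$, nonzero for only finitely many $(p,d)$, and is disposed of separately.)

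The one step needing real care is the functoriality and multiplicativity of the symmetrization maps asserted in the second paragraph — equivalently, that $R$ is an algebra object of $\Mod_{\Delta}$ and the Koszul differential is $\Delta$-equivariant. This is a short but slightly delicate bookkeeping with the combinatorics of symmetrizing tensors; once it is in hand the rest is formal, resting only on the smallness of $\Phi(T_n)$, the behaviour of $\Phi$ and $\lw{p}{}$ under $\boxtimes$, and closure of small $\Delta$-modules under subquotients.
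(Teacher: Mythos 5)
Your argument is correct and is essentially the paper's own proof: you resolve $\C$ over $P$ by the Koszul complex, tensor with $R$, take the degree-$d$ strand with terms $R^{(d-p)} \boxtimes \lw{p}{\Phi(T_1)}$, check the differentials are $\Delta$-morphisms, and realize each term as a subquotient of $\Phi(T_i) \boxtimes \Phi(T_j) = \Phi(T_d)$, concluding by closure of small $\Delta$-modules under subquotients. The only cosmetic differences are that the paper factors $R^{(i)}$ through $P^{(i)} = \Phi(\Sym^i)$ rather than symmetrizing $\Phi(T_1)^{\boxtimes i}$ directly, and views $\lw{j}{}$ as a quotient rather than a sub of $T_j$.
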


\begin{proof}
Let $W(V, L)=\bigotimes_{x \in L} V_x$ be the degree one piece of $P(V, L)$.  The $P(V, L)$-module $\C$ admits a
Koszul resolution, the terms of which are $P(V, L) \otimes \bw{i}{W(V, L)}$.  Tensoring this over $P(V, L)$ with
$R(V, L)$, we find that there is a complex computing $F_p(V, L)$ whose terms are $R(V, L) \otimes \bw{i}{W(V, L)}$.
Put
\begin{displaymath}
M_{i,j}(V, L)=R^{(i)}(V, L) \otimes \bw{j}{W(V, L)}.
\end{displaymath}
We then find that $F_p^{(d)}(V, L)$ is the homology of a complex
\begin{displaymath}
M_{d-p-1,p+1}(V, L) \to M_{d-p, p}(V, L) \to M_{d+1-p, p-1}(V, L)
\end{displaymath}
Now, the functor $(V, L) \mapsto M_{i,j}(V, L)$ is easily seen to be a $\Delta$-functor.  A short calculation with
the Koszul complex shows that the differentials in the above complex respect the $\Delta$-module structure on the
$M_{i,j}$.  Thus $F_p^{(d)}$, as a $\Delta$-module, is the homology of the complex of $\Delta$-modules
\begin{displaymath}
M_{d-p-1,p+1} \to M_{d-p,p} \to M_{d+1-p,p-1}.
\end{displaymath}
We now claim that $M_{i,j}$ is a small $\Delta$-functor.  Indeed, $R^{(i)}$ is a quotient of $P^{(i)}=\Phi(\Sym^i)$,
which is a quotient of $\Phi(T_i)$.  Of course, $(V, L) \mapsto \bw{j}{W(V, L)}$ is the $\Delta$-module
$\Phi(\lw{j}{})$, and thus a quotient of $\Phi(T_j)$.  Thus $M_{i,j}=R^{(i)} \boxtimes \Phi(\lw{j}{})$ a quotient of
$\Phi(T_i) \boxtimes \Phi(T_j)=\Phi(T_{i+j})$, and is thus small.  Now, a subquotient of a small
$\Delta$-module is small; thus $F_p^{(d)}$ is small.  This completes the proof.
\end{proof}

\begin{corollary}
\label{cor:delta-fg}
The $\Delta$-module $F_p^{(d)}$ is finitely generated.
\end{corollary}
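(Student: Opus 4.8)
The plan is to deduce this immediately from Theorem~\ref{thm:delta-small} together with the noetherianity results already in hand. First I would invoke Theorem~\ref{thm:delta-small} to conclude that $F_p^{(d)}$ is a small $\Delta$-module, i.e., a subquotient of a finite direct sum $\bigoplus_i \Phi(T_{n_i})$. Next I would recall that each $\Phi(T_n)$ is noetherian by Theorem~\ref{tnnoeth}, and that the class of noetherian $\Delta$-modules is closed under finite direct sums and under passage to subquotients: an ascending chain of submodules of a subobject is an ascending chain in the ambient object; an ascending chain in a quotient lifts to one in the cover; and a chain in a finite direct sum stabilizes as soon as each of its (finitely many) projections does. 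Hence $\bigoplus_i \Phi(T_{n_i})$ is noetherian, and therefore so is its subquotient $F_p^{(d)}$.

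Finally, I would note that a noetherian $\Delta$-module is finitely generated: if it were not, one could choose an infinite sequence of elements, no one of which lies in the submodule generated by its predecessors, producing a strictly ascending chain of submodules, a contradiction. Applying this to $F_p^{(d)}$ gives the claim. (This last implication is already recorded in the discussion of noetherian $\Delta$-modules preceding Theorem~\ref{tnnoeth}, so it can simply be cited.)

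There is essentially no obstacle here. All the substance has been absorbed into showing that $F_p^{(d)}$ is small (Theorem~\ref{thm:delta-small}) and that $\Phi(T_n)$ is noetherian (Theorem~\ref{tnnoeth}, via Theorem~\ref{wnnoeth}, Proposition~\ref{wninv} and Proposition~\ref{delta-w}); the corollary is a purely formal consequence. The only point worth spelling out at all is the stability of noetherianity under subquotients and finite direct sums, which is routine.
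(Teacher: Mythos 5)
Your proposal is correct and matches the paper's (implicit) argument exactly: the paper records after Theorem~\ref{tnnoeth} that small $\Delta$-modules are noetherian, hence finitely generated, and the corollary is then immediate from Theorem~\ref{thm:delta-small}. Your spelling out of the closure of noetherianity under finite direct sums and subquotients is the same routine step the paper leaves to the reader.
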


Thus there exists a finite list of $p$-syzygies of degree $d$ for the schemes $X(V, L)$ which give rise to all
$p$-syzygies of degree $d$ under the basic operations on the $\Delta$-module $F_p^{(d)}$.  In view of this corollary,
the statement ``$F_p$ is finitely generated as a $\Delta$-module'' is equivalent to the statement ``$F_p^{(d)}=0$
for $d \gg 0$.''  This probably does not hold in full generality, but in \S \ref{s:finlev}  we single out a large
class of $\Delta$-schemes for which it might hold.

\begin{corollary}
The series $[F_p^{(d)}]^*$ is a rational function in the $s_{\lambda}$.
\end{corollary}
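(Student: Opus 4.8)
The plan is to read this off immediately from Theorem~\ref{thm:delta-small} and Theorem~\ref{hilbert5}. First, Theorem~\ref{thm:delta-small} tells us that $F_p^{(d)}$ is a small $\Delta$-module; recall that the substance of that argument is the Koszul complex $M_{d-p-1,p+1} \to M_{d-p,p} \to M_{d+1-p,p-1}$ of small $\Delta$-modules whose homology is $F_p^{(d)}$, together with the fact that small $\Delta$-modules are closed under subquotients. Hence Theorem~\ref{hilbert5} applies verbatim and yields that the Hilbert series $H^*_{F_p^{(d)}}$ is a rational function of the $s_{\lambda}$, in the sense of the definition preceding Theorem~\ref{hilbert2} (a numerator and a denominator in the $s_{\lambda}$, the latter having nonzero constant term).

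It then remains only to match this with the statement as phrased. As noted when $H^*_M$ was introduced for objects of $\Sym(\mc{S})$, the grading by order makes the variable $t$ superfluous: the order-$n$ part $m_n^*$ of $H^*_M$ is homogeneous of degree $n$ in the $s_{\lambda}$, so $[M]^*$ is recovered from $H^*_M(t)$ by setting $t=1$, a harmless specialization since it merely undoes the bookkeeping grading. Applying this to $M=F_p^{(d)}$ gives that $[F_p^{(d)}]^*$ is rational in the $s_{\lambda}$. I do not anticipate any real obstacle here: all the substantive work has already been done, namely the reduction of $F_p^{(d)}$ to a complex of small $\Delta$-modules (proof of Theorem~\ref{thm:delta-small}) and the rationality theorem for small $\Delta$-modules (Theorem~\ref{hilbert5}, which itself rests on Corollary~\ref{hilbert4} and Proposition~\ref{delta-filt}). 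The only point worth a sentence in the write-up is the $t=1$ identification just described.
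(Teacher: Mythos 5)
Your proposal is correct and matches the paper's route exactly: the corollary is deduced immediately from Theorem~\ref{thm:delta-small} (smallness of $F_p^{(d)}$) together with Theorem~\ref{hilbert5}, with the identification of $[F_p^{(d)}]^*$ with $H^*_{F_p^{(d)}}$ at $t=1$ being precisely the remark made when these Hilbert series were defined in \S\ref{s:hilbert}.
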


This essentially shows that the information content of the $p$-syzygies of degree $d$ of $X$ is finite.  As we have
said, this series can be computed algorithmically (this is discussed below).  Thus, essentially all the information
about $p$-syzygies of degree $d$ of $X$ can be computed in finite time!

\subsection{Computational aspects}
\label{ss:comp}

We now elaborate on the remark we have made that our proofs give algorithms to calculate the relevant objects.  Keep
the same notation as the previous section.  Say we would like to compute generators for the $\Delta$-module $F_p^{(d)}$
of $p$-syzygies of degree $d$.  (The algorithm for computing the rational function $[F_p^{(d)}]^*$ proceeds along
similar lines.)  We proceed as follows.  First, $F_p^{(d)}$ is the homology of the sequence
\begin{displaymath}
M_{d-p-1, p+1} \to M_{d-p, p} \to M_{d+1-p, p-1}.
\end{displaymath}
As explained, $R^{(i)}$ is a quotient of $\Phi(T_i)$ while $(V, L) \mapsto \bw{j}{W(V, L)}$ is a quotient of
$\Phi(T_j)$, and so $M_{i,j}$ is a quotient of $\Phi(T_{i+j})$.  In particular, each module in the above complex is
a quotient of $\Phi(T_d)$.  This shows that each is canonically a $W_d^{S_d}$-module and the differentials respect this
structure.  Now, $W_d$ only has partitions with at most $d$ rows; the same is true for the $M_{i,j}$ above.  Thus it
suffices to see what happens when we evaluate on $\C^d$.  Precisely, let $A$ be the twisted commutative algebra
$L \mapsto W_d^{S_d}((\C^d)_L)$ and let $N_{i,j}$ be the $A$-module given by $L \mapsto M_{i,j}((\C^d)_L)$, where
$(\C^d)_L$ denotes the constant family on $\C^d$.  Let $E$ be the homology of
the complex
\begin{displaymath}
N_{d-p-1, p+1} \to N_{d-p, p} \to N_{d+1-p, p-1};
\end{displaymath}
thus $E_L=F_p^{(d)}((\C^k)_L)$.  The row bounds then imply that generators for $E$
as an $A$-module are generators for $F_p^{(d)}$ as a $W_d^{S_d}$-module.  Proposition~\ref{delta-w} thus implies that
generators for $E$ as an $A$-module are generators for $F_p^{(d)}$ as a $\Delta$-module.

Now, $A$ is equal to $\Sym(U\langle 1 \rangle)^{S_d}$ where $U=(\C^d)^{\otimes d}$.  It follows that any
partition in $A$ has at most $\dim{U}=d^d$ rows.  Since $M_{d-i,i}$ is a subquotient of $W_d$, it follows that
$N_{d-i,i}$ is a subquotient of $\Sym(U \langle 1 \rangle)$.  Thus any partition appearing in $N_{d-i, i}$ has at most
$d^d$ rows as well.  Thus we do not loose information by evaluating on $\C^{d^d}$ (regarding everything in the Schur
model).  That is, generators for $E(\C^{d^d})$ as an $A(\C^{d^d})$-module give generators for $E$.

Finally, $A(\C^{d^d})$ is the subring of the polynomial ring in $d^{2d}$ variables which are $S_d$-invariant.  Each
of the modules $N_{d-i,i}(\C^{d^d})$ is a finite module over this ring.  And $E(\C^{d^d})$ is the homology of the
complex $N_{\bullet}$ at $i=p-1$.  We have thus reduced the problem to a computation involving explicitly described
finitely generated rings and modules.  These computations can be done algorithmically.

\subsection{$\Delta$-schemes of finite level}
\label{s:finlev}

A $\Delta$-scheme $X$ (resp.\ $\Delta$-ideal $I$) has \emph{level $\le d$} if every partition appearing in its
coordinate ring has at most $d$ rows.  One easily sees that $X$ has level $\le d$ if and only if it is contained in
$\dSub_d$.  We say that $X$ has \emph{finite level} if it has level $\le d$ for some $d$.
All the operations we have discussed (union, intersection, sum, formation of secant, tangent and reduced schemes)
preserve the finite level condition.  We thus have many examples of $\Delta$-schemes of finite level.  Note, however,
that the final $\Delta$-scheme $\bV$ does not have finite level.

The following three statements are equivalent:
\begin{enumerate}
\item Every ascending chain of $\Delta$-ideals of finite level stabilizes.
\item Every $\Delta$-ideal of finite level is finitely generated.
\item Every $\Delta$-ideal $I$ of finite level is ``uniformly generated,'' i.e., there exists an integer $d$ such
that $I(V, L)$ is generated in degrees $\le d$ for all $(V, L)$.
\end{enumerate}
It is clear that (a) and (b) are equivalent, and clear that each implies (c).  That (c) implies (b) follows
from Corollary~\ref{cor:delta-fg} and the discussion following it.

It seems reasonable to hope that statements (a)--(c) are in fact true.  Indeed, there has been some recent work
establishing that particular finite level $\Delta$-ideals are finitely generated:
\begin{itemize}
\item Raicu \cite{Raicu} has proved the GSS conjecture, which implies that the ideal of the first secant
variety to the Segre embedding is finitely generated as a $\Delta$-ideal.
\item Draisma and Kuttler \cite{DraismaKuttler} have shown that the secant varieties to the Segre varieties are
all cut out set-theoretically by equations of a bounded degree.  (In fact, in \cite[\S 7]{DraismaKuttler}, a conjecture
concerning something similar to statements (a)--(c) above is posed.)
\item Landsberg and Weyman \cite{LW} have obtained results about the ideal of the tangent variety to the Segre, which
imply that it is finitely generated as a $\Delta$-ideal.
\end{itemize}
A proof of the statements (a)--(c) would have great consequences:  for instance, it would essentially encompass all of
the above results.  Assuming these statements are true, it would be reasonable to believe that the $p$th syzygy module
of a finite level $\Delta$-scheme is supported in finitely many degrees.

\section{Application to Segre varieties}

We now apply the theory we have developed to the study of syzygies of Segre embeddings.

\subsection{Theorems~A and~B}

We begin by establishing the theorems stated in the introduction.  Let $F_p$ be the $\Delta$-module of $p$-syzygies
of the Segre embedding, as defined in \S \ref{ss:delta-syz}.  By Theorem~\ref{thm:delta-small} and its corollaries,
Theorems~A and~B follow from the following result, which states that $F_p$ has only finitely
many non-zero graded pieces.  This result is well-known to the experts, so we only give a brief proof.  We thank
Aldo Conca for showing it to us.

\begin{proposition}
\label{degbd}
For $p \ge 1$, the space $F_p$ is supported in degrees $p+1, \ldots, 2p$.
\end{proposition}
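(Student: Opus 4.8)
The plan is to show, for a fixed family $(V,L)$, that $F_p(V,L)=\Tor^P_p(R,\mathbb{C})$ (with $P=P(V,L)$ and $R=R(V,L)$) vanishes in degrees $\le p$ and in degrees $\ge 2p+1$, and to derive both bounds from one input: $R$ is a \emph{Koszul algebra}. Indeed $R$ is the iterated Segre product of the polynomial rings $\Sym(V_x)$, $x\in L$, and a Segre product of Koszul algebras is Koszul (classical, e.g.\ Backelin--Fr\"oberg; alternatively the toric ideal of the Segre has a quadratic Gr\"obner basis).

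For the lower bound I would use only that $R$ is quadratic, i.e.\ that the ideal $I(V,L)$ is generated in degree $2$ — this is weaker than Koszulness and is recalled in the introduction. In a minimal graded free resolution $\cdots\to F_1\to F_0=P\to R\to 0$, minimality forces all differentials to have entries in the maximal ideal of $P$; since $F_1$ is generated in degree $2$, an immediate induction gives that $F_j$, hence $\Tor^P_j(R,\mathbb{C})$, is concentrated in degrees $\ge j+1$ for $j\ge 1$. In particular $F_p$ vanishes below degree $p+1$.

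For the upper bound the plan is an induction on $p$ via the change-of-rings spectral sequence
\begin{displaymath}
E^2_{s,q}=\Tor^R_s\bigl(\mathbb{C},\,\Tor^P_q(R,\mathbb{C})\bigr)\ \Longrightarrow\ \Tor^P_{s+q}(\mathbb{C},\mathbb{C}),
\end{displaymath}
whose abutment is the Koszul cohomology of $P$, with $\Tor^P_m(\mathbb{C},\mathbb{C})=\bigwedge^m(P_1)$ concentrated in internal degree $m$. The cases $p=0,1$ are immediate. For the inductive step, $E^2_{0,p}=\Tor^P_p(R,\mathbb{C})$, and since $E^{r+1}_{0,p}$ is a quotient of $E^r_{0,p}$ for every $r$ while $E^\infty_{0,p}$ is a subquotient of $\bigwedge^p(P_1)$ (hence sits in internal degree $p$), any class in degree $d>2p$ must be killed by an incoming differential from some $E^2_{r,p-r+1}$ with $r\ge 2$, so $(E^2_{r,p-r+1})_d\neq 0$. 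But $q:=p-r+1<p$, so by induction $M:=\Tor^P_q(R,\mathbb{C})$ lies in degrees $\le 2q$, and because $R$ is Koszul the minimal $R$-free resolution of $\mathbb{C}$ is linear, whence $\Tor^R_r(\mathbb{C},M)$ lies in degrees $\le r+2q$. Therefore $d\le r+2q=2p-(r-2)\le 2p$, a contradiction; so $F_p$ vanishes in degrees $\ge 2p+1$.

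The upper bound is where the content lies, and the two delicate points are: confirming that $R$ is Koszul (citing Backelin--Fr\"oberg, or via a quadratic Gr\"obner basis for the toric ideal), and the grading bookkeeping in the spectral sequence — in particular the claim that the degree-$d$ part of $E^2_{0,p}$ persists to whichever page the fatal differential acts on, which is forced by the $(0,p)$-spot being a tower of quotients. One can of course bypass the spectral sequence altogether by invoking the known estimate $\sup\{d:\Tor^P_p(R,\mathbb{C})_d\neq 0\}\le 2p$, valid for every Koszul quotient $R$ of a polynomial ring.
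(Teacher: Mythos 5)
Your proof is correct, but it reaches the key upper bound by a different mechanism than the paper. The paper's proof is essentially a citation: by \cite{ERT} the Segre ideal has a Gr\"obner basis of quadrics, so one degenerates to the initial (monomial) ideal, bounds its Betti degrees by $2p$ via the Taylor resolution (the lcm of $p$ quadratic monomials has degree at most $2p$), and transfers the bound back by upper semicontinuity of graded Betti numbers under Gr\"obner degeneration; the lower bound $p+1$ is the same minimality argument you give. You instead use Koszulness of $R$ and the change-of-rings spectral sequence $\Tor^R_s(\C,\Tor^P_q(R,\C))\Rightarrow\Tor^P_{s+q}(\C,\C)$, in effect reproving the general estimate $t_p^P(R)\le 2p$ for Koszul quotients of a polynomial ring (Backelin, Avramov--Conca--Iyengar). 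The bookkeeping you flag does check out: since $P\to R$ is surjective, $R_+$ annihilates $\Tor^P_q(R,\C)$, so by induction it is a finite sum of copies of $\C$ shifted into degrees $\le 2q$, and linearity of the minimal $R$-resolution of $\C$ gives $(E^2_{r,p-r+1})_d=0$ for $d>r+2(p-r+1)=2p-(r-2)$; meanwhile the $(0,p)$ spot only receives differentials, so a class of internal degree $d>2p$ that is never hit would survive to $E^\infty_{0,p}$, contradicting that the abutment $\bw{p}{P_1}$ sits in degree $p$. The two inputs are close --- a quadratic Gr\"obner basis already implies Koszulness --- but your route (especially via Backelin--Fr\"oberg for Segre products) avoids term orders entirely and yields the bound for any Koszul quotient, at the cost of a spectral-sequence argument in place of the paper's appeal to monomial-ideal combinatorics and semicontinuity.
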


\begin{proof}
This follows from the fact that the ideal of the Segre variety is generated by a Gr\"obner basis of degree 2
\cite[Prop.~17]{ERT}, together with general facts about Gr\"obner bases (such as the Taylor resolution, see
\cite[Exercise~17.11]{Eisenbud}).
\end{proof}

\begin{remark}
The bound in the proposition is not optimal.  Indeed, it is known \cite{Rubei} that $F_2$ and $F_3$ are supported in
degrees 3 and 4, while the upper bounds provided by the proposition are 4 and 6.  The optimal upper bound is not known.
However, our proof of Theorem~\ref{thm2} provides an algorithm for finding it for any particular value of $p$.
\end{remark}

\subsection{The work of Lascoux}
\label{ss:lascoux}

Lascoux determined the entire minimal resolution of certain determinantal varieties (see \cite{Lascoux}, and also
\cite{PW}, where a gap in \cite{Lascoux} is resolved).  The rank 1 case of his result exactly gives the
leading term of our series $f_p$.  We recall his results in our language.

First, we discuss some terminology.  Let $m=s_{\lambda_1} \cdots s_{\lambda_n}$ be a monomial in the variables
$s_{\lambda}$.  We say that $m$ has \emph{order} $n$.  We say that $m$ has \emph{degree} $d$ if each $\lambda_i$ is
a partition of $d$.  Every term in the series $[F_p^{(d)}]$ has degree $d$, while the orders of the terms are
unbounded.  The degree $d$, order $n$ terms in $f_p$ give information about the $p$-syzygies of degree $d$ for the
Segre embedding of an $n$-fold product of projective spaces.

Let $f_{p, 2}$ be the order two two term of $f_p$.  This is the leading order term of $f_p$.  We consider its degree
$d$ piece $f^{(d)}_{p, 2}$.  Write $d=p+h$.  Of course, if $h \le 0$ then $f^{(d)}_{p, 2}=0$.  Proposition~\ref{degbd}
implies that $f^{(d)}_{p, 2}=0$ for $h>p$.  Lascoux gives a much better bound:  $f^{(d)}_{p, 2}=0$ for $h>\sqrt{p}$.
Assume now $1 \le h \le \sqrt{p}$.  Let $S$ be the set of pairs of partitions $(\alpha, \beta)$ such that $\alpha$
has at most $h$ columns, $\beta$ has at most $h$ rows and $\vert \alpha \vert+\vert \beta \vert=p-h^2$.  Associate to
$(\alpha, \beta)$ a new pair of partitions $(\mu, \nu)$ as follows.  Start with a rectangle with $h$ columns and $h+1$
rows.  To get $\mu$, append $\alpha$ to the bottom and $\beta$ to the right.  To get $\nu$, append the dual of $\beta$
to the bottom and the dual of $\alpha$ to the right.  Lascoux's result is then
\begin{displaymath}
f^{(d)}_{p, 2}=\tfrac{1}{2} \sum_{(\alpha, \beta) \in S} s_{\mu} s_{\nu}.
\end{displaymath}
For example, say $p=1$ and $d=2$.  Then $h=1$.  Since $p-h^2=0$ the set $S$ consists of the single pair $(\alpha,
\beta)$ where $\alpha=\beta$ is the zero partition.  The partitions $\mu$ and $\nu$ are both $(1, 1)$ and so we
find $f^{(2)}_{1, 2}=\tfrac{1}{2} s_{(1,1)}^2$, i.e., the quadratic piece of the ideal of the embedding of
$\P(V_1) \times \P(V_2)$ is $\lw{2}{V_1} \otimes \lw{2}{V_2}$.

\subsection{The polynomial $g_p$}

Let $G_p=\Psi(F_p)$ be the cokernel of the map $\Delta F_p \to F_p$.  Since $F_p$ is a finitely generated
$\Delta$-module, $G_p$ is a finite length object of the category $\Sym(\mc{S})$.  The object $G_p$ records precisely
those syzygies that cannot be built out of syzygies on a product of fewer projective spaces.  We let $g_p$ be the
\emph{polynomial} $[G_p]$ (and $g_p^*=[G_p]^*$).  The objects $L^i \Psi F_p$ for $i \ge 1$ are important as well
--- indeed, $[F_p]$ can be recovered from them --- though they are bit less accessible.

We remark that our two main theorems can be rephrased using $f_p$ and $g_p$ so as to look more similar:
Theorem~\ref{thm2} is exactly the statement that $g_p^*$ is a polynomial, while Theorem~\ref{thm3} is exactly the
statement that $f_p^*$ is a rational function.

\subsection{An Euler characteristic}
\label{ss:euler}

Define
\begin{displaymath}
\chi=\sum_{p \ge 0} (-1)^p f_p.
\end{displaymath}
There are only finitely many terms of a given degree and order in the sum, and so it makes sense.  We remark that
$f_0=1$ --- the first term in the resolution of $R$ is always $P=P \otimes \C$ and so $F_0=\C$ for any $(V, L)$.
The main result of this section is an explicit computation of $\chi$.  The notation used in the following proposition
is defined below it.

\begin{proposition}
\label{euler}
We have
\begin{displaymath}
\chi=\left[ \sum_{k=0}^{\infty} \exp(s_{(k)}) \right] \boxtimes \left[ \sum_{\lambda} \kappa_{\lambda}
\exp(s_{\lambda}') \right],
\end{displaymath}
where $\kappa_{\lambda}$ is the rational number $(-1)^{\vert \lambda \vert} \sgn(c_{\lambda}) z_{\lambda}^{-1}$.  The
second sum is taken over all partitions $\lambda$ --- including $\lambda=0$, where the term is 1.
\end{proposition}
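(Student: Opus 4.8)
The plan is to compute the Euler characteristic directly from the Koszul resolution of $\C$ and then recognise the two factors that drop out as the generating series in the statement. Following the proof of Theorem~\ref{thm:delta-small}, write $W(V,L)=\bigotimes_{x\in L}V_x$ for the degree one part of $P(V,L)=\Sym(W(V,L))$. The Koszul complex $P(V,L)\otimes\bw{\bullet}{W(V,L)}$ is the minimal free resolution of $\C$ over $P(V,L)$, and tensoring it over $P(V,L)$ with $R(V,L)$ yields a complex of $\Delta$-modules whose $j$-th term is $R\boxtimes\bw{j}{W}$, placed in homological and internal degree $j$, and whose homology is $F_{\bullet}$. In each fixed internal degree $d$ this complex is finite (the factor $R^{(d-j)}$ vanishes for $j>d$), so its Euler characteristic may be taken degree by degree, and since $\boxtimes$ is bilinear on classes we get
\begin{displaymath}
\chi=\sum_{p\ge0}(-1)^p[F_p]=\sum_{j\ge0}(-1)^j[R\boxtimes\bw{j}{W}]=[R]\boxtimes\Big(\sum_{j\ge0}(-1)^j[\bw{j}{W}]\Big)
\end{displaymath}
in the degree-completed ring $\Sym(K(\mc{S}))$.

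The first factor is immediate: $R^{(k)}(V,L)=\bigotimes_{x\in L}\Sym^k(V_x)=\Sym(F_{(k)})(V,L)$, so $R=\bigoplus_{k\ge0}\Sym(F_{(k)})$ and $[R]=\sum_{k\ge0}\exp(s_{(k)})$. For the second factor I would match $E:=\sum_{j\ge0}(-1)^j[\bw{j}{W}]$ with $\sum_{\lambda}\kappa_{\lambda}\exp(s'_{\lambda})$ order by order. The order $n$ piece of an object of $\Sym(\mc{S})$ is determined, up to its $S_n$-equivariance, by its class in $K(\mc{S}^{\otimes n})=K(\mc{S})^{\otimes n}$ (symmetric functions in alphabets $X_1,\dots,X_n$), and under this identification $\boxtimes$ becomes the ordinary product. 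Since $\bw{j}{(V_1\otimes\cdots\otimes V_n)}$ has character $e_j$ evaluated on the product alphabet $X_1\cdots X_n$, the order $n$ class of $E$ is $\sum_j(-1)^je_j[X_1\cdots X_n]=\prod_{a_1,\dots,a_n}(1-x^{(1)}_{a_1}\cdots x^{(n)}_{a_n})$; taking logarithms and expanding the exponential rewrites this as $\sum_{\lambda}\frac{(-1)^{\ell(\lambda)}}{z_{\lambda}}\prod_{i=1}^n p_{\lambda}[X_i]$, and $\frac{(-1)^{\ell(\lambda)}}{z_{\lambda}}=\kappa_{\lambda}$ because $\sgn(c_{\lambda})=(-1)^{|\lambda|-\ell(\lambda)}$. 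By the definition of $s'_{\lambda}$ (given just below the proposition) the order $n$ class of $\exp(s'_{\lambda})$ is likewise $\prod_{i=1}^n p_{\lambda}[X_i]$, so $E$ and $\sum_{\lambda}\kappa_{\lambda}\exp(s'_{\lambda})$ agree in every order, and, since only finitely many $\lambda$ contribute in each internal degree, as elements of the completion. Substituting the two identifications into the displayed formula for $\chi$ completes the proof.

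The work is concentrated in the matching of $E$, and it is bookkeeping rather than ideas. The points that require care are: that the point-wise product $\boxtimes$ corresponds to multiplication of classes in $K(\mc{S})^{\otimes n}$, whereas the tensor product used to build $\Sym$ (and hence $\exp$) does not; the factorials and the projection $K(\mc{S})^{\otimes n}\to\Sym^n(K(\mc{S}))$ relating $[\,\cdot\,]$ to $[\,\cdot\,]^*$; and the degenerate cases $\lambda=0$ and $n=0$, which must be checked against the conventions fixed after the statement. The one genuinely computational ingredient, the identity $\sum_j(-1)^je_j[X_1\cdots X_n]=\exp\!\big(-\sum_{m\ge1}\frac1m\prod_i p_m[X_i]\big)=\sum_{\lambda}\kappa_{\lambda}\prod_i p_{\lambda}[X_i]$, is an elementary generating-function manipulation.
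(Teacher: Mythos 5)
Your proposal is correct, and its skeleton coincides with the paper's: the identity $\chi=[R]\boxtimes\bigl(\sum_{j\ge 0}(-1)^j[\lw{j}{W}]\bigr)$ obtained from the Koszul complex, and the computation $[R]=\sum_{k\ge 0}\exp(s_{(k)})$, are exactly Lemmas~\ref{euler-1} and~\ref{euler-2}. Where you genuinely diverge is in evaluating the second factor. The paper proves the stronger Lemma~\ref{euler-3}, computing $[\bS_{\lambda}(W)]$ for \emph{every} partition $\lambda$ of $p>0$: it uses the decomposition $\bS_{\lambda}(U\otimes V)=\bigoplus C_{\lambda\mu\nu}\bS_{\mu}(U)\otimes\bS_{\nu}(V)$ to get the recurrence $v_n=\tfrac{1}{n}Av_{n-1}$, hence $\sum_n v_n=\exp(A)v_0$, and then diagonalizes the matrix $A=([\Sp_{\lambda}\otimes\Sp_{\mu}])$ via the character table of $S_p$ ($AB=BD$, with $B^{-1}=CB^t$ from orthogonality); the proposition follows by specializing to $\lambda=(1^p)$ and summing with signs. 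You compute only that alternating sum, directly, by passing to symmetric functions in $n$ alphabets and using $\sum_j(-1)^je_j[Y]=\exp\bigl(-\sum_m p_m[Y]/m\bigr)$ together with the multiplicativity of power sums on a product alphabet — a Cauchy-type identity. This is sound: your identifications $s'_{\lambda}\leftrightarrow p_{\lambda}$ and $\kappa_{\lambda}=(-1)^{\ell(\lambda)}/z_{\lambda}$ are consistent with the paper's normalizations of $c_{\lambda}$ and $\chi_{\lambda}$, and the bookkeeping you flag (the $1/n!$ relating $[\,\cdot\,]$ to the class in $K(\mc{S})^{\otimes n}$, and the degenerate $\lambda=0$, $k=0$ terms) is handled in the paper only by convention, so you are not missing anything it supplies. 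The trade-off: your route is shorter and replaces the matrix-exponential/character-table argument by the standard Schur-to-power-sum change of basis, at the cost of not producing the paper's more general formula for $[\bS_{\lambda}(W)]$ — which, however, is not used elsewhere. In effect the two computations are dual forms of the same character-theoretic fact, but yours packages it as a one-line plethystic generating-function identity.
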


Extracting the degree $k$ piece of the above formula yields:

\begin{corollary}
\label{eulercor}
For $k>0$ we have
\begin{displaymath}
\chi^{(k)}=\sum_{p=0}^k \left[ \frac{(-1)^p}{p!} \sum_{\lambda \vdash p} (\# c_{\lambda}) \sgn(c_{\lambda})
\exp(s_{(k-p)} \boxtimes s'_{\lambda}) \right].
\end{displaymath}
The $p=0$ term of the above sum is $\exp(s_{(k)})$.  We have $\chi^{(0)}=1$.
\end{corollary}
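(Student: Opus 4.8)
The plan is to obtain Corollary~\ref{eulercor} by extracting the degree-$k$ homogeneous component from the formula of Proposition~\ref{euler}. The first step is to reorganize the right-hand side there. Using distributivity of $\boxtimes$ over the two sums — legitimate since, as noted before Proposition~\ref{euler}, each fixed degree and order contains only finitely many monomials — together with the identity $\exp(X)\boxtimes\exp(Y)=\exp(X\boxtimes Y)$ for order-one classes $X,Y$, one rewrites
\[
\chi=\sum_{j\ge 0}\ \sum_{\lambda}\ \kappa_{\lambda}\,\exp\bigl(s_{(j)}\boxtimes s_{\lambda}'\bigr).
\]
The identity used here is elementary: writing $X=[F_A]$ and $Y=[F_B]$, where $F_A,F_B$ are the order-one objects of $\Sym(\mc{S})$ with values $A(V_x),B(V_x)$ at $(V,\{x\})$ for (virtual) $A,B$ in $\mc{S}$, one has $\Sym(F_A)\boxtimes\Sym(F_B)\cong\Sym(F_{A\boxtimes B})$ in $\Sym(\mc{S})$ — both assign $\bigotimes_{x\in L}\bigl(A(V_x)\otimes B(V_x)\bigr)$ to $(V,L)$ — whence on classes $\exp([F_A])\boxtimes\exp([F_B])=\exp([F_{A\boxtimes B}])=\exp([F_A]\boxtimes[F_B])$.

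Next I would separate by degree. Each term $\exp(s_{(j)}\boxtimes s_{\lambda}')$ is homogeneous of degree $j+|\lambda|$: the class $s_{(j)}$ has degree $j$, the class $s_{\lambda}'$ has degree $|\lambda|$, the product $s_{(j)}\boxtimes s_{\lambda}'$ expands by the Littlewood--Richardson rule into Schur functors of total degree $j+|\lambda|$, and the exponential of a homogeneous order-one class of degree $d$ is homogeneous of degree $d$ (every monomial $s_{\mu_1}\cdots s_{\mu_m}$ that occurs has all $|\mu_i|=d$). Hence $\chi^{(k)}$ is the sum of the terms with $j+|\lambda|=k$; setting $p=|\lambda|$, so $j=k-p$ and $\lambda\vdash p$, this is
\[
\chi^{(k)}=\sum_{p=0}^{k}\ \sum_{\lambda\vdash p}\ \kappa_{\lambda}\,\exp\bigl(s_{(k-p)}\boxtimes s_{\lambda}'\bigr).
\]
Substituting $z_{\lambda}^{-1}=(\#c_{\lambda})/p!$ into $\kappa_{\lambda}=(-1)^{|\lambda|}\sgn(c_{\lambda})z_{\lambda}^{-1}$ produces exactly the formula in the corollary. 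For the trailing claims: the $p=0$ summand is $\exp(s_{(k)})$ because the empty-partition class $s_{\emptyset}'$ is the unit for $\boxtimes$, so $s_{(k)}\boxtimes s_{\emptyset}'=s_{(k)}$; and $\chi^{(0)}=[F_0]=1$ since $F_0=\C$, while $F_p$ is supported in degrees $\ge p+1$ for $p\ge 1$ by Proposition~\ref{degbd}, so nothing of positive index contributes in degree $0$.

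I do not expect a serious obstacle: once Proposition~\ref{euler} is granted, the corollary is a routine unpacking, and almost all of the work is bookkeeping with degrees and orders. The two points that repay a little attention are the compatibility of the pointwise product $\boxtimes$ with the formal exponential on order-one classes (dispatched above) and the handling of the degree-zero edge terms, where one must keep track of the normalization conventions for the class $[\,\cdot\,]$ on $\Sym(\mc{S})$; neither goes beyond the definitions of \S\ref{ss:schur} and \S\ref{ss:schursym}.
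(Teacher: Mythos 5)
Your argument is correct and is exactly the derivation the paper intends: combine the two exponentials via the identity $\exp(x)\boxtimes\exp(y)=\exp(x\boxtimes y)$ (this is Lemma~\ref{euler-4}, which you re-prove in essentially the same way, via $F'\boxtimes G'=(F\boxtimes G)'$), then collect the homogeneous degree-$k$ terms with $j+|\lambda|=k$ and substitute $z_\lambda^{-1}=\#c_\lambda/p!$ into $\kappa_\lambda$. Your handling of the edge terms ($s'_{\emptyset}$ as the $\boxtimes$-unit for the $p=0$ summand, and $\chi^{(0)}=1$ from $F_0=\C$ together with Proposition~\ref{degbd}) matches the paper's conventions, so there is nothing to add.
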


We now define notation that will be in place for the rest of the section (and is used in the above proposition).
Let $\lambda$ be a partition of $p$.  We let $c_{\lambda}$ denote the conjugacy class of $S_p$ corresponding to
$\lambda$, normalized so that $\lambda=(1, \ldots, 1)$ corresponds to the identity element, and we let $z_{\lambda}=p!/
\# c_{\lambda}$ be the order of the centralizer of any element of $c_{\lambda}$.  We let $\chi_{\lambda}$
denote the character of $S_p$ corresponding to $\lambda$, normalized so that $\lambda=(1, \ldots, 1)$ corresponds
to the sign character $\sgn$.  The notation $s_{\lambda}$ means what is has meant previously, namely the element
$[\bS_{\lambda}]$ of $K(\mc{S})$; in particular, $s_{(k)}=[\Sym^k]$.  We define $s'_{\lambda}$ to be the element of
$K(\mc{S})$ of degree $p$ which corresponds to the class function on $S_p$ supported on $c_{\lambda}$ and
taking value $z_{\lambda}$ there.  Explicitly,
\begin{displaymath}
s'_{\lambda}=\sum_{\mu \vdash p} \chi_{\mu}(c_{\lambda}) s_{\mu}.
\end{displaymath}
The symbol $\boxtimes$ is the point-wise tensor product:  $s_{\lambda} \boxtimes s_{\mu}$ is computed using the
Littlewood--Richardson rule.
As usual, $W=W_1$ is the object of $\Sym(\mc{S})$ which assigns to $(V, L)$ the tensor product of the $V$'s.
Throughout this section $\lw{i}{W}$ and $\bS_{\lambda}(W)$ refer to point-wise operations in $\Sym(\mc{S})$.
For instance, $\lw{i}{W}$ is the object of $\Sym(\mc{S})$ given by
\begin{displaymath}
(\lw{i}{W})(V, L)=\lw{i}(W(V, L))=\bw{i}{} \left( \bigotimes_{x \in L} V_x \right).
\end{displaymath}
We now begin proving Proposition~\ref{euler}.  We begin with the following.

\begin{lemma}
\label{euler-1}
We have
\begin{displaymath}
\chi = [R] \boxtimes \left( \sum_{p=0}^{\infty} (-1)^p [ \lw{p}{W} ] \right).
\end{displaymath}
\end{lemma}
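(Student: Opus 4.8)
The plan is to realize $\chi$ as the Euler characteristic of a Koszul-type complex of $\Delta$-modules and then rearrange. First I would recall the complex already used in the proof of Theorem~\ref{thm:delta-small}: for each object $(V,L)$ of $\Vec^{\Delta}$ the residue field $\C$, regarded as a module over the polynomial ring $P(V,L)$, is resolved by the Koszul complex whose $i$th term is $P(V,L) \otimes \bw{i}{W(V,L)}$, where $W(V,L) = \bigotimes_{x \in L} V_x$ is the degree one part of $P(V,L)$. Tensoring this resolution over $P(V,L)$ with $R(V,L)$ yields a complex $C_\bullet(V,L)$ with $C_i(V,L) = R(V,L) \otimes \bw{i}{W(V,L)}$ whose homology in degree $p$ is $\Tor^{P(V,L)}_p(R(V,L),\C) = F_p(V,L)$. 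Since the point-wise tensor product on $\Sym(\mc S)$ satisfies $(R \boxtimes \lw{i}{W})(V,L) = R(V,L) \otimes \bw{i}{W(V,L)}$, and — as observed in the proof of Theorem~\ref{thm:delta-small} — the Koszul differentials are functorial in $(V,L)$, this exhibits $F_\bullet$ as the homology of a complex $C_\bullet$ of objects of $\Sym(\mc S)$ with $C_i = R \boxtimes \lw{i}{W}$.

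Next I would take the Euler characteristic of $C_\bullet$ degree by degree. Everything is internally graded, with $R^{(a)}$ and $\lw{b}{W}$ concentrated in degree $a$ and $b$ respectively, and the Koszul differential preserves the total degree $a+b$ on $R^{(a)} \boxtimes \lw{b}{W}$; hence for each fixed internal degree $d$ the complex $C_\bullet$ restricts to a \emph{finite} complex with $j$th term $R^{(d-j)} \boxtimes \lw{j}{W}$ and homology $F_j^{(d)}$ in spot $j$ (only $0 \le j \le d$ occur, since $R^{(d-j)}=0$ for $j>d$). As the class map $N \mapsto [N]$ is additive on short exact sequences in the semisimple category $\Sym(\mc S)$, the Euler characteristic of this finite complex equals the alternating sum of the classes of its homology:
\[
\sum_{j \ge 0} (-1)^j \big[ F_j^{(d)} \big] = \sum_{j \ge 0} (-1)^j \big[ R^{(d-j)} \boxtimes \lw{j}{W} \big],
\]
and the left-hand side is precisely the degree $d$ part of $\chi$.

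Finally I would sum over $d$. Because the point-wise tensor product $\boxtimes$ is exact in each variable on $\Sym(\mc S)$, it descends to a bilinear operation on classes, so $[R^{(d-j)} \boxtimes \lw{j}{W}] = [R^{(d-j)}] \boxtimes [\lw{j}{W}]$. Summing the displayed identity over all $d$ and reindexing by $i = d-j$ (only finitely many pairs $(i,j)$ contribute to any fixed degree) gives
\[
\chi = \Big( \sum_{i \ge 0} [R^{(i)}] \Big) \boxtimes \Big( \sum_{j \ge 0} (-1)^j [\lw{j}{W}] \Big) = [R] \boxtimes \Big( \sum_{p=0}^{\infty} (-1)^p [\lw{p}{W}] \Big),
\]
which is the assertion of the lemma.

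I do not expect a genuine obstacle here: the argument is purely formal, amounting to additivity of Euler characteristics for a Koszul complex. The one point I would take care to spell out is that, although the Koszul complex $P(V,L) \otimes \bw{\bullet}{W(V,L)}$ is unbounded as $\dim W(V,L)$ grows and hence $C_\bullet$ is not a bounded complex of objects of $\Sym(\mc S)$, its restriction to any fixed internal degree is finite, so the degreewise Euler characteristic bookkeeping in $\Sym(K(\mc S))$ is legitimate; this is also the reason $\chi = \sum_p (-1)^p f_p$ makes sense as a sum with only finitely many nonzero terms in each degree (one may cite Proposition~\ref{degbd}, but it is not needed, since a $p$th syzygy has internal degree at least $p$).
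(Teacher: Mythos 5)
Your argument is exactly the paper's: the paper proves this lemma by citing the Koszul complex $R \boxtimes \lw{i}{W}$ from the proof of Theorem~\ref{thm:delta-small} and invoking ``standard facts about Euler characteristics,'' which is precisely the degreewise additivity bookkeeping you spell out. Your proposal is correct and simply makes explicit the details (finiteness in each internal degree, bilinearity of $\boxtimes$ on classes) that the paper leaves implicit, so there is nothing to add.
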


\begin{proof}
As discussed in the proof of Theorem~\ref{thm:delta-small}, $F_p$ is the homology of the complex $R \boxtimes \lw{i}{W}$
at $i=p-1$.  The formula follows from standard facts about Euler characteristics.
\end{proof}

\begin{lemma}
\label{euler-2}
We have $[R^{(d)}]=\exp(s_{(d)})$, and so $[R]=\sum_{d \ge 0} \exp(s_{(d)})$.
\end{lemma}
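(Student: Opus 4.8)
The plan is to recognize $R^{(d)}$ as a symmetric algebra in the category $\Sym(\mc{S})$ and then apply the exponential formula for the class of a symmetric algebra.

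First I would recall that, for the Segre $\Delta$-scheme, the degree-$d$ piece of the coordinate ring is $R^{(d)}(V, L) = \bigotimes_{x \in L} \Sym^d(V_x)$, exactly as in the introduction (the affine coordinate ring of the cone over the Segre agrees, as a graded ring, with the projective coordinate ring of $\prod_{x} \P(V_x)$). Since $\Sym^d = \bS_{(d)}$, the computation recorded in \S\ref{ss:schursym} --- that $\Sym(F_{\lambda})$ is the object of $\Sym(\mc{S})$ sending $(V, L)$ to $\bigotimes_{x \in L} \bS_{\lambda}(V_x)$, with multiplication given by concatenation of tensors --- identifies $R^{(d)}$, as an object of $\Sym(\mc{S})$, with $\Sym(F_{(d)})$. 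Here $F_{(d)}$ is the order-$1$ object of $\Sym(\mc{S})$ attaching $\bS_{(d)}(V_x)$ to a one-element family $(V, \{x\})$. The only thing to verify is that the algebra structure on $R^{(d)}$ (concatenation of tensors) matches that on $\Sym(F_{(d)})$, which is immediate from the explicit description.

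Next I would invoke the identity $[\Sym(A)] = \exp([A])$ established in \S\ref{ss:symcat}, applied inside $\Sym(\mc{S})$ with $A = F_{(d)}$. Since $F_{(d)}$ is the simple object $\bS_{(d)}$ of $\mc{S}$ placed in order $1$, its class in $\Sym^1(K(\mc{S})) = K(\mc{S})$ is $s_{(d)} = [\bS_{(d)}]$, the variable introduced in \S\ref{ss:euler}. Hence $[R^{(d)}] = \exp(s_{(d)})$, an equality in the completion of $\Sym(K(\mc{S}))$.

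Finally, $R = \bigoplus_{d \geq 0} R^{(d)}$ as a graded object of $\Sym(\mc{S})$ (the grading being by internal degree), and in each order $n$ only finitely many values of $d$ contribute to any fixed monomial $s_{\mu_1} \cdots s_{\mu_n}$, since each $\mu_i$ must be a partition of $d$; thus $[-]$ is additive over this direct sum in the completion, giving $[R] = \sum_{d \geq 0} \exp(s_{(d)})$. There is essentially no obstacle in this argument: the only steps deserving a moment's attention are the identification of $R^{(d)}$ with a symmetric algebra for the induction-type tensor product on $\Sym(\mc{S})$ (rather than for $\boxtimes$) and the fact that the sum over $d$ converges in the appropriate completion of $\Sym(K(\mc{S}))$.
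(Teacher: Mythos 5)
Your proposal is correct and is essentially the paper's argument: the paper simply notes $R^{(d)}_n(V_1,\ldots,V_n)=\Sym^d(V_1)\otimes\cdots\otimes\Sym^d(V_n)$, so that $[R^{(d)}_n]=\tfrac{1}{n!}[\Sym^d]^n$ in $\Sym^n(K(\mc{S}))$, and sums over $n$ — which is the same computation you package via the identification $R^{(d)}\cong\Sym(F_{(d)})$ and the formula $[\Sym(A)]=\exp([A])$ from \S\ref{ss:symcat}. (Your remark about matching algebra structures is harmless but unnecessary, since only the class of the underlying object of $\Sym(\mc{S})$ is needed.)
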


\begin{proof}
We have
\begin{displaymath}
R^{(d)}_n(V_1, \ldots, V_n)=\Sym^d(V_1) \otimes \cdots \otimes \Sym^d(V_n).
\end{displaymath}
Thus $[R^{(d)}_n]$ is equal to $\tfrac{1}{n!} [\Sym^d]^n$ in $\Sym^n(K(\mc{S}))$.  The result follows.
\end{proof}

\begin{lemma}
\label{euler-3}
Let $\lambda$ be a partition of $p>0$.  Let $s_{\lambda, n}$ be the class in $\Sym^n(K(\mc{S}))$ of the
functor $(V_1, \ldots, V_n) \mapsto \bS_{\lambda}(V_1 \otimes \cdots \otimes V_n)$.  Then
\begin{displaymath}
\sum_{n=0}^{\infty} s_{\lambda, n}=\frac{1}{p!} \sum_{\mu \vdash p} (\# c_{\mu}) \chi_{\lambda}(c_{\mu})
\exp(s'_{\mu}).
\end{displaymath}
Note that the left side above is nothing other than $[\bS_{\lambda}(W)]$.
\end{lemma}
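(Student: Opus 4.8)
The plan is to compute both sides in the power-sum basis and match terms order by order. The key input is the plethysm-type decomposition
\begin{displaymath}
\bS_\lambda(V_1\otimes\cdots\otimes V_n)\;\cong\;\bigoplus_{\mu_1,\ldots,\mu_n\vdash p}\bS_{\mu_1}(V_1)\otimes\cdots\otimes\bS_{\mu_n}(V_n)^{\oplus g^\lambda_{\mu_1,\ldots,\mu_n}},
\end{displaymath}
where $g^\lambda_{\mu_1,\ldots,\mu_n}=\dim\Hom_{S_p}\!\big(\Sp_\lambda,\Sp_{\mu_1}\otimes\cdots\otimes\Sp_{\mu_n}\big)$ with $S_p$ acting diagonally, equivalently $\langle\chi_\lambda,\chi_{\mu_1}\cdots\chi_{\mu_n}\rangle$. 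First I would prove this from the definition $\bS_\lambda(U)=(U^{\otimes p}\otimes\Sp_\lambda)_{S_p}$ with $U=V_1\otimes\cdots\otimes V_n$: the shuffle isomorphism identifies $U^{\otimes p}$ with $V_1^{\otimes p}\otimes\cdots\otimes V_n^{\otimes p}$ and carries the factor-permuting $S_p$ to the diagonal $S_p$ inside $S_p^n$; applying Schur--Weyl, $V_j^{\otimes p}\cong\bigoplus_{\mu_j}\bS_{\mu_j}(V_j)\otimes\Sp_{\mu_j}$ as $S_p$-representations, and taking $(-\otimes\Sp_\lambda)_{S_p}$ of the tensor product over $j$ leaves the multiplicity space $(\Sp_{\mu_1}\otimes\cdots\otimes\Sp_{\mu_n}\otimes\Sp_\lambda)_{S_p}$, of dimension $g^\lambda_{\mu_1,\ldots,\mu_n}$. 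Feeding this into the recipe for the class of an object of $\Sym^n(\mc{S})$ (apply the functor to $\mc{S}^{\otimes n}$, pass to $K(\mc{S})^{\otimes n}$, project to $\Sym^n K(\mc{S})$, divide by $n!$) gives
\begin{displaymath}
s_{\lambda,n}=\frac1{n!}\sum_{\mu_1,\ldots,\mu_n\vdash p}g^\lambda_{\mu_1,\ldots,\mu_n}\,s_{\mu_1}\cdots s_{\mu_n}.
\end{displaymath}

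Next I would rewrite the inner sum using power sums. The element $s'_\mu=\sum_{\nu\vdash p}\chi_\nu(c_\mu)s_\nu$ is exactly the power sum $p_\mu$ under $K(\mc{S})\cong\Lambda$, and the inverse relation $s_\mu=\sum_{\rho\vdash p}z_\rho^{-1}\chi_\mu(c_\rho)s'_\rho$ follows formally from the orthogonality relations for the characters of $S_p$. Substituting this for each $s_{\mu_i}$, expanding $g^\lambda_{\mu_1,\ldots,\mu_n}=\tfrac1{p!}\sum_{\sigma\in S_p}\chi_\lambda(\sigma)\prod_i\chi_{\mu_i}(\sigma)$, and carrying out the $\mu_i$-sums via $\sum_{\mu\vdash p}\chi_\mu(\sigma)\chi_\mu(c_\rho)=z_\rho$ when $\sigma$ has cycle type $\rho$ and $0$ otherwise, each $\rho_i$ collapses to the common cycle type of $\sigma$ and the expression telescopes to
\begin{displaymath}
\sum_{\mu_1,\ldots,\mu_n\vdash p}g^\lambda_{\mu_1,\ldots,\mu_n}\,s_{\mu_1}\cdots s_{\mu_n}=\frac1{p!}\sum_{\sigma\in S_p}\chi_\lambda(\sigma)\,(s'_{\rho(\sigma)})^n=\sum_{\rho\vdash p}z_\rho^{-1}\chi_\lambda(c_\rho)\,(s'_\rho)^n,
\end{displaymath}
where $\rho(\sigma)$ is the cycle type of $\sigma$. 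Dividing by $n!$, summing over $n\ge0$, recognizing $\sum_{n\ge0}(s'_\rho)^n/n!=\exp(s'_\rho)$ in $\Sym K(\mc{S})$, and using $z_\rho^{-1}=(\#c_\rho)/p!$ gives precisely the asserted formula; the remark that the left side is $[\bS_\lambda(W)]$ is immediate since $W(V,L)=\bigotimes_{x\in L}V_x$.

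I expect the substantive step to be the character-orthogonality manipulation in the second paragraph, turning $\sum g^\lambda s_{\mu_1}\cdots s_{\mu_n}$ into $\sum_\rho z_\rho^{-1}\chi_\lambda(c_\rho)(s'_\rho)^n$; everything else is bookkeeping with the $1/n!$ factors built into the definition of $[-]$. One point to handle with a little care is that $\Sym K(\mc{S})$ is the polynomial ring on the $s_\lambda$, so the $s'_\rho$ are genuine non-monomial elements of it and $(s'_\rho)^n$ must be read as a power in this ring; this causes no trouble because every identity above is a linear-algebra identity inside the degree-$p$ part of $K(\mc{S})$, which is then raised to the $n$th power in $\Sym K(\mc{S})$.
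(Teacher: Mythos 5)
Your argument is correct, and the bookkeeping checks out: the $1/n!$ built into the definition of the class $[-]$ on $\Sym^n(\mc{S})$, the $n=0$ term, and the final resummation into $\exp(s'_\rho)$ all work as you say. It is, however, organized differently from the paper's proof. The paper uses only the two-factor decomposition $\bS_\lambda(U\otimes V)=\bigoplus C_{\lambda\mu\nu}\,\bS_\mu(U)\otimes\bS_\nu(V)$ (quoted from Fulton--Harris), which gives the recurrence $s_{\lambda,n}=\tfrac{1}{n}\sum_{\mu,\nu}C_{\lambda\mu\nu}\,s_\mu s_{\nu,n-1}$; it then writes this as $v_n=\tfrac{1}{n}Av_{n-1}$, so $\sum_n v_n=\exp(A)v_0$, and computes $\exp(A)$ by diagonalizing the Kronecker-product matrix $A=([\Sp_\lambda\otimes\Sp_\mu])_{\lambda,\mu}$ with the character table ($AB=BD$, $B^{-1}=CB^t$). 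You instead decompose $\bS_\lambda(V_1\otimes\cdots\otimes V_n)$ in one shot via the $n$-fold Kronecker multiplicities $g^\lambda_{\mu_1,\ldots,\mu_n}=\langle\chi_\lambda,\chi_{\mu_1}\cdots\chi_{\mu_n}\rangle$ (which you derive from Schur--Weyl rather than cite) and collapse the resulting sum by column orthogonality, i.e.\ you pass directly to the basis $s'_\rho$ of power sums, in which the internal product is diagonal. The two routes rest on the same underlying fact --- the character table diagonalizes the Kronecker product, which is precisely the paper's identity $AB=BD$ --- but yours trades the recurrence and matrix-exponential formalism for a closed-form expansion, at the cost of proving the $n$-fold decomposition; the paper's version isolates the diagonalization as the single linear-algebra step and needs only the two-factor case. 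One small shortcut available to you: $\sum_{\mu\vdash p}\chi_\mu(\sigma)\,s_\mu=s'_{\rho(\sigma)}$ is immediate from the definition of $s'$, so the inversion $s_\mu=\sum_\rho z_\rho^{-1}\chi_\mu(c_\rho)\,s'_\rho$ is not actually needed in your second paragraph.
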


\begin{proof}
A simple manipulation shows that for any vector spaces $U$ and $V$ we have
\begin{displaymath}
\bS_{\lambda}(U \otimes V)=\bigoplus C_{\lambda \mu \nu} \bS_{\mu}(U) \otimes \bS_{\nu}(V),
\end{displaymath}
where the sum is over all partitions $\mu$ and $\nu$ of $p$, and
\begin{displaymath}
C_{\lambda \mu \nu}= \dim (\Sp_{\lambda} \otimes \Sp_{\mu} \otimes \Sp_{\nu})^{S_p} = \dim \Hom_{S_p}
(\Sp_{\lambda}, \Sp_{\mu} \otimes \Sp_{\nu}).
\end{displaymath}
(This appears as Exercise~6.11(b) in \cite{FH}.)  We therefore have
\begin{displaymath}
\bS_{\lambda}(V_1 \otimes \cdots \otimes V_n)=\bigoplus_{\mu, \nu} C_{\lambda \mu \nu} \bS_{\mu}(V_1) \otimes
\bS_{\nu}(V_2 \otimes \cdots \otimes V_n).
\end{displaymath}
We thus have a recurrence
\begin{displaymath}
s_{\lambda, n}=\frac{1}{n} \sum_{\mu, \nu} C_{\lambda \mu \nu} s_{\mu} s_{\nu, n-1}.
\end{displaymath}
It will now be convenient to switch from working in the degree $p$ piece of $K(\mc{S})$ to working in $K(S_p)$.
The two are in isomorphism via $s_{\lambda}=[\bS_{\lambda}] \leftrightarrow [\Sp_{\lambda}]$.  Thus $s_{\lambda, n}$ can
be regarded as an element of $\Sym^n(K(S_p))$.  Note that the sum $\sum_{\mu} C_{\lambda \mu \nu} s_{\mu}$ is equal
to $[\Sp_{\lambda} \otimes \Sp_{\nu}]$.  We can thus rephrase our last expression as follows.  Let $v_n$ be the column
vector $(s_{\lambda, n})_{\lambda}$ and let $A$ be the matrix $([\Sp_{\lambda} \otimes \Sp_{\mu}] )_{\lambda, \mu}$.
Then
\begin{displaymath}
v_n=\tfrac{1}{n} A v_{n-1}.
\end{displaymath}
We thus have
\begin{displaymath}
\sum v_n=\exp(A) v_0.
\end{displaymath}
Note that $v_0$ has a 1 in the entry $\lambda=(p)$ and a 0 in all other entries.  Indeed, an empty tensor product is
equal to $\C$, so $s_{\lambda, 0}$ is the class of $S_{\lambda}(\C)$ in $\Sym^0(K(S_p))=\Q$; in other words,
$a_{\lambda, 0}=\dim{\bS_{\lambda}(\C)}$.  This is 1 if $\lambda=(p)$ and 0 otherwise.  We therefore find that the
initial vector $v_0$ in the above recurrence is quite simple.  The problem is to determine the exponential of the
matrix $A$.  We will achieve this by diagonalizing $A$.

Let $B$ be the matrix $(\chi_{\lambda}(c_{\mu}))_{\lambda,
\mu}$.  We index by rows first, then columns.  Thus the rows of $B$ are indexed by irreducible characters and the
columns by conjugacy classes; $B$ is the character table of $S_p$.  Let $D$ be the diagonal matrix
given by $D_{\lambda \lambda}=z_{\lambda} \delta_{\lambda}$ where $z_{\lambda}=p!/\# c_{\lambda}$ is the cardinality
of the centralizer of $c_{\lambda}$ and $\delta_{\lambda}$ is the class function on $S_p$ which assigns
$c_{\lambda}$ the value 1 and all other conjugacy classes 0.  We then have the following fundamental
identity
\begin{equation}
\label{eq2}
AB=BD.
\end{equation}
We now explain this identity.  First, we regard the entries of $A$ as class functions, so $A_{\lambda \mu}$ is
the character of $\Sp_{\lambda} \otimes \Sp_{\mu}$.  The entry of the product $AB$ at $(\lambda, \mu)$ evaluated at
$c_{\zeta}$ is thus given by
\begin{displaymath}
\left( \sum_{\nu} A_{\lambda \nu} B_{\nu \mu} \right)(c_{\zeta})
=\sum_{\nu} \chi_{\lambda}(c_{\zeta}) \chi_{\nu}(c_{\zeta}) \chi_{\nu}(c_{\mu})
=\chi_{\lambda}(c_{\zeta}) \sum_{\nu} \chi_{\nu}(c_{\zeta}) \chi_{\nu}(c_{\mu}).
\end{displaymath}
Now, for $g$ and $h$ in $S_p$ the sum $\sum \chi_{\nu}(g) \chi_{\nu}(h)$ is the trace of $(g, h)$ acting on
the representation $\C[S_p]$ of $S_p \times S_p$.  Since this is a permutation representation, the
trace is given by the number of fixed points.  An element $x \in S_p$ is a fixed point if $gxh^{-1}=x$, or
equivalently, if $g=xhx^{-1}$.  Thus the number of fixed points is 0 if $g$ and $h$ are not conjugate, and is
otherwise the size of the centralizer of $g$.  We therefore find
\begin{displaymath}
\chi_{\lambda}(c_{\zeta}) \sum_{\nu} \chi_{\nu}(c_{\zeta}) \chi_{\nu}(c_{\mu})
=\chi_{\lambda}(c_{\mu}) z_{\mu} \delta_{\mu}(c_{\zeta})
=B_{\lambda \mu} D_{\mu \mu}(c_{\zeta}).
\end{displaymath}
This proves \eqref{eq2}.

The equation \eqref{eq2} diagonalizes $A$.  However, for it to be useful we need to compute $B^{-1}$.  This is
straightforward.  Let $C$ be the diagonal matrix given by $C_{\lambda \lambda}=z_{\lambda}^{-1}$.  Then the
orthonormality of characters is precisely the identity
\begin{displaymath}
BCB^t=1
\end{displaymath}
and so $B^{-1}=CB^t$.  (This again uses the fact that all representations of symmetric groups are self-dual,
which is equivalent to their characters being real valued.)

We now find
\begin{displaymath}
\exp(A) v_0=B \exp(B^{-1}AB) B^{-1} v_0=B \exp(D) CB^t v_0.
\end{displaymath}
Simple matrix multiplication now gives the stated formula.
\end{proof}

\begin{lemma}
\label{euler-4}
Let $x$ and $y$ belong to $K(\mc{S})$.  Then $\exp(x) \boxtimes \exp(y)=\exp(x \boxtimes y)$.
\end{lemma}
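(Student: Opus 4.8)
The plan is to reduce the identity to the case of \emph{effective} classes, where $\exp$ acquires a concrete meaning as a symmetric algebra and the statement becomes a natural isomorphism of functors.

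First I would note that the point-wise product $\boxtimes$ on $\widehat{\Sym}(K(\mc{S}))$ is graded for the order grading (it is induced from the point-wise product on $\Sym(\mc{S})$, which preserves order), so extracting the order-$n$ component of the asserted identity reduces it to
\[
x^n \boxtimes y^n = n!\,(x \boxtimes y)^n \quad\text{in } \Sym^n(K(\mc{S})),
\]
for all $x,y \in K(\mc{S})$; summing these back up with the coefficients $1/(n!)^2$ then yields $\exp(x)\boxtimes\exp(y)=\exp(x\boxtimes y)$. (The factor $n!$ is genuine: it reflects the fact that $\boxtimes$ on $\Sym^n(K(\mc{S}))$ is the operation induced from the tensor product on $\mc{S}$ via $K(\Sym^n(\mc{S}))\to\Sym^n(K(\mc{S}))$, not the naive product of a polynomial ring.) Fixing a target degree $d$, each side of this identity is a polynomial over $\Q$ in the finitely many coordinates $a_\lambda$ of $x$ and $b_\mu$ of $y$ with $|\lambda|,|\mu|\le d$; since two polynomials agreeing on all nonnegative integer inputs are equal, it suffices to check the identity when $x=[S]$ and $y=[T]$ for genuine objects $S,T$ of $\mc{S}$.

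For such $S$ and $T$, let $S\langle 1\rangle$ and $T\langle 1\rangle$ denote the order-$1$ objects of $\Sym(\mc{S})$ they determine. The computation of $\Sym(F_\lambda)$ in \S\ref{ss:schursym} is insensitive to the simplicity of $\bS_\lambda$, so the same argument gives $\Sym(S\langle 1\rangle)(V,L)=\bigotimes_{x\in L}S(V_x)$, and likewise for $T$ and for the point-wise product $S\boxtimes T$ in $\mc{S}$. Comparing values on each object $(V,L)$ of $\Vec^f$ — both send it to $\bigotimes_{x\in L}\bigl(S(V_x)\otimes T(V_x)\bigr)$, compatibly with morphisms and with the $S_{\#L}$-action — produces a natural isomorphism
\[
\Sym(S\langle 1\rangle)\boxtimes\Sym(T\langle 1\rangle)\;\cong\;\Sym\bigl((S\boxtimes T)\langle 1\rangle\bigr)
\]
in $\Sym(\mc{S})$. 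Passing to classes and invoking $[\Sym(A)]=\exp([A])$ (applied in the tensor category $\Sym(\mc{S})$) together with $[S\boxtimes T]=[S]\boxtimes[T]$ gives $\exp([S])\boxtimes\exp([T])=\exp([S]\boxtimes[T])$, completing the effective case and hence the proof.

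The one place that warrants care — and what I would flag as the main obstacle — is the compatibility bookkeeping: one must be sure that the symbol $\boxtimes$ appearing in the statement, i.e. on $\widehat{\Sym}(K(\mc{S}))$, is indeed the operation induced from the point-wise tensor product on $\Sym(\mc{S})$ by $[-]$, and that it respects the order grading. Once that is pinned down (it is essentially the definition of how $\boxtimes$ is extended from $\mc{S}$), the density reduction and the functorial isomorphism above are both routine. One could instead prove $\Sym^n(S\langle 1\rangle)\boxtimes\Sym^n(T\langle 1\rangle)\cong\Sym^n((S\boxtimes T)\langle 1\rangle)$ directly and take classes to obtain $x^n\boxtimes y^n=n!(x\boxtimes y)^n$ for effective $x,y$, but extending to all of $K(\mc{S})$ still calls on the same polynomiality observation, so it is cleanest to use it once.
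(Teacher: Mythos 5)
Your proof is correct and takes essentially the same route as the paper: the heart in both cases is the isomorphism $\Sym(S\langle 1\rangle)\boxtimes\Sym(T\langle 1\rangle)\cong\Sym\bigl((S\boxtimes T)\langle 1\rangle\bigr)$ together with $[\Sym(S\langle 1\rangle)]=\exp([S])$, which is exactly the paper's key fact $F'\boxtimes G'=(F\boxtimes G)'$ with $\exp([F])=[F']$. The only difference is bookkeeping: where you reduce from general classes in $K(\mc{S})$ to effective ones order-by-order via polynomial density, the paper dispatches this reduction in one line using additivity of $\boxtimes$ and multiplicativity of $\exp$.
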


\begin{proof}
For an object $F$ of $\mc{S}$ let $F'$ be the object of $\Sym(\mc{S})$ given by $(V, L) \mapsto \bigotimes_{x \in L}
F(V_x)$.  Then $\exp([F])=[F']$.  Now, let $x$ and $y$ in $K(\mc{S})$ be given.  Since $\boxtimes$ is additive and
$\exp$ is multiplicative, it suffices to treat the case where $x=[F]$ and $y=[G]$, with $F$ and $G$ in $\mc{S}$.
We then have
\begin{displaymath}
\exp(x) \boxtimes \exp(y)=[F'] \boxtimes [G']=[F' \boxtimes G']=[(F \boxtimes G)']=\exp([F \boxtimes G])
=\exp(x \boxtimes y).
\end{displaymath}
The key fact is the obvious formula $F' \boxtimes G'=(F \boxtimes G)'$.
\end{proof}

The proposition and corollary follow easily from the above lemmas.

\subsection{Examples for small $p$}
\label{ss:smallp}

The main result of \cite{Rubei} states that Segre embeddings satisfy the Green--Lazarsfeld property $N_3$ but not $N_4$.
This means that $F_1$, $F_2$ and $F_3$ are supported exactly in degrees 2, 3 and 4 respectively (and that $F_4$ has
support outside degree 5).  From this, we deduce the following equalities:
\begin{displaymath}
f_1=-\chi^{(2)}, \qquad f_2=\chi^{(3)}, \qquad f_3=-\chi^{(4)}, \qquad f_4^{(5)}=\chi^{(5)}.
\end{displaymath}
These values have been computed in Proposition~\ref{euler}.  They are listed explicitly, and in simplified form,
in Figure~\ref{fig1} (other than $f_4^{(5)}$).  We explain how the value for $f_1$ given in the figure was derived, the
values of $f_2$ and $f_3$ being gotten in a similar fashion.  Proposition~\ref{euler} gives
\begin{displaymath}
f_1=-\exp(s_{(2)})+\exp(s_{(1)} \boxtimes s'_{(1)}) - \tfrac{1}{2} \left( \exp(s'_{(1,1)})-\exp(s'_{(2)}) \right).
\end{displaymath}
We have $s'_{(1)}=s_{(1)}$, while $s'_{(1,1)}=s_{(2)}+s_{(1,1)}$ and $s'_{(2)}=s_{(2)}-s_{(1,1)}$.  Now, the
product $s_{(1)} \boxtimes s_{(1)}$ is just the usual product in $K(\mc{S})$, i.e., it is the class of the functor
$V \mapsto \Sym^1(V) \otimes \Sym^1(V)$.  This, of course, is equal to $s_{(2)}+s_{(1,1)}$.  We thus find
\begin{displaymath}
f_1=\tfrac{1}{2} \exp(s_{(2)}+s_{(1,1)}) + \tfrac{1}{2} \exp(s_{(2)}-s_{(1,1)}) - \exp(s_{(2)}).
\end{displaymath}
This is the value given in the figure.

We have previously stated (without proof) that the equation for $\P^1 \times \P^1$ in $\P^3$ generates $F_1$ as a
$\Delta$-functor.  This implies that $g_1$ is the order two piece of $f_1$.  Similarly, we have stated
(without proof) that any non-zero syzygy for $\P^1 \times \P^2$ in $\P^5$ generates $F_2$.  This implies that $g_2$ is
the order two piece of $f_2$.  Finally, the order two piece of $g_3$ is the same as that of $f_3$;
we are unaware if $g_3$ has any terms of higher order.  These remarks explain the values of $g_1$, $g_2$
and $g_3$ given in the figure.

\begin{figure}
\begin{displaymath}
\begin{array}{|l|}
\hline \\[-11pt]
s=[\Sym^2], \quad w=[\lw{2}{}] \\[2pt]
\hline \\[-10pt]
f_1=\tfrac{1}{2} e^{s+w} + \tfrac{1}{2} e^{s-w} - e^s \\[2pt]
\hline \\[-10pt]
g_1=\tfrac{1}{2} w^2 \\[2pt]
\hline\hline \\[-11pt]
s=[\Sym^3], \quad w=[\lw{3}{}], \quad t=[\bS_{(2, 1)}] \\[2pt]
\hline \\[-10pt]
f_2=\tfrac{1}{3} e^{s+w+2t} - \tfrac{1}{3} e^{s+w-t} - e^{s+t} + e^s \\[2pt]
\hline \\[-10pt]
g_2=wt \\[2pt]
\hline\hline \\[-11pt]
s=[\Sym^4], \quad w=[\bw{4}{}], \quad a=[\bS_{(3,1)}], \quad b=[\bS_{(2,2)}],
\quad c=[\bS_{(2,1,1)}] \\[2pt]
\hline \\[-10pt]
f_3 =
\tfrac{1}{8} e^{s+w+3a+2b+3c}-\tfrac{1}{8} e^{s+w-a+2b-c}+\tfrac{1}{4} e^{s-w-a+c} -\tfrac{1}{4} e^{s-w+a-c} \\[3pt]
\hskip 5ex + \tfrac{1}{2} e^{s+b-c} - \tfrac{1}{2} e^{s+2a+b+c} + e^{s+a} - e^s. \\[2pt]
\hline \\[-10pt]
g_3=aw+\tfrac{1}{2}c^2 \quad (+?) \\[2pt]
\hline
\end{array}
\end{displaymath}
\caption{Values of $f_p$ and $g_p$ for $p$ small.}
\label{fig1}
\end{figure}

\section{Questions and problems}
\label{s:ques}

\newcounter{quescount}
\def\ques{\par\vskip 1ex\noindent ({\bf \refstepcounter{quescount}\thequescount})\ }

\ques
Are finitely generated twisted commutative algebras noetherian?  We proved this for algebras generated in order 1, and
can also prove it for certain algebras in order 2.

\ques
Are finite level $\Delta$-ideals finitely generated?  More generally, are the $\Delta$-modules $F_p$ for a finite level
$\Delta$-scheme concentrated in finitely many degrees?

\ques
Let $M$ be a finitely generated module over a twisted commutative algebra finitely generated in order 1.  We have
shown that $H_M(t)$ is a polynomial of $t$ and $e^t$.  Our proof shows that the maximal power of $e^t$ is related to
the number of rows in $M$.  How else does the form of $H_M(t)$ relate to the structure of $M$?

\ques
Our series $f_p$ forgets the $S_n$-equivariance on $F_{p,n}$.  Can one modify $f_p$ to retain this information, and
still have something resembling a rationality result?

\ques
Is the series $f_p$ a polynomial in the $s_{\lambda}$ and the $e^{\pm s_{\lambda}}$?  This
does not follow from what we have proved, but one might hope that it is true based on some of our results.  In fact,
based on the computations of $f_1$, $f_2$ and $f_3$ for the Segre, one might hope for a stronger statement:  $f_p$ is
a polynomial in only the $e^{\pm s_{\lambda}}$.  We suspect this is false, but do not know.

\ques
Compute $f_p$ and $g_p$ for more values of $p$ or for $\Delta$-schemes other than the Segre.  We have given an
algorithm to do this, but it is too inefficient to use.  It would be particularly interesting to compute $f_4$ for the
Segre since this is the first place where the Green--Lazarsfeld property fails and the value is not given by the Euler
characteristic formula.

\ques
Is the series $\sum_{i \ge 0} (-1)^i [L^i \Psi F_p]^* q^i$ a rational function of $q$ and the $s_{\lambda}$?  This
series contains more information than $f_p$ and $g_p$, since one can recover $f_p^*$ by applying $\Phi$ and setting
$q=1$, and one can recover $g_p^*$ by setting $q=0$.

\end{document}